\definecolor{mygreen}{RGB}{28,172,0} 
\definecolor{mylilas}{RGB}{170,55,241}
\titleformat{\section}{\Large\bfseries}{\thesection.}{4pt}{}
\titleformat{\subsection}{\large\bfseries}{\thesection.\arabic{subsection}.}{4pt}{}
\titleformat{\subsubsection}{\bfseries}{\thesection.\arabic{subsection}.\arabic{subsubsection}.}{4pt}{}
\titleformat*{\paragraph}{\bfseries}
\titleformat*{\subparagraph}{\bfseries}
\newtheorem{theorem}{Theorem}[section]
\newtheorem{lemma}[theorem]{Lemma}
\newtheorem{proposition}[theorem]{Proposition}
\theoremstyle{definition}
\newtheorem{definition}[theorem]{Definition}
\newtheorem{remark}[theorem]{Remark}
\newcommand{\vep}{\varepsilon}
\newcommand{\yj}{\langle y \rangle}
\newcommand{\Rb}{\mathbb{R}}
\newcommand{\Kc}{\mathcal{K}}
\newcommand{\Vc}{\mathcal{V}}
\newcommand{\Sc}{\mathcal{S}}
\newcommand{\Ec}{\mathcal{E}}
\newcommand{\Oc}{\mathcal{O}}
\newcommand{\Cc}{\mathcal{C}}
\newcommand{\Dc}{\mathcal{D}}
\newcommand{\pa}{\partial}
\newcommand{\out}{\textup{ex}}
\newcommand{\Ls}{\mathscr{L}}
\newcommand{\Hs}{\mathscr{H}}
\newcommand{\As}{\mathscr{A}}
\numberwithin{equation}{section}
\title[Type I-Log singularity in the 3D and 4D Keller-segel system] 
      {Construction of type I-Log blowup for the Keller-Segel system in dimensions $3$ and $4$.}
\author[V. T. Nguyen, N. Nouaili, H. Zaag]{}
\subjclass{Primary: 35K50, 35B40; Secondary: 35K55, 35K57.}
 \keywords{Blowup solution, Blowup profile, Stability, Semilinear wave equation}
\email[N. Nouaili]{nouaili@ceremade.dauphine.fr}
 \email[V. T. Nguyen]{vtnguyen@ntu.edu.tw}
 \email[H. Zaag]{Hatem.Zaag@math.cnrs.fr}
\thanks{
\today}
\begin{document}
\maketitle

\maketitle
	
	
	\centerline{Van Tien Nguyen$^{(1)}$, Nejla Nouaili$^{(2)}$ and Hatem Zaag$^{(3)}$} 
	\medskip
	{\footnotesize
	\centerline{ $^{(1)}$ Department of Mathematics,
Institute of Applied Mathematical Sciences,
National Taiwan University. }
 \centerline{ $^{(2)}$ CEREMADE, Universit\'e Paris Dauphine, Paris Sciences et Lettres, France }
\centerline{ $^{(3)}$Universit\'e Sorbonne Paris Nord,
			LAGA, CNRS (UMR 7539), F-93430, Villetaneuse, France.}
	}

	\bigskip
	\begin{center}\thanks{\today}\end{center}

\begin{abstract} We construct finite time blowup solutions to the parabolic-elliptic Keller-Segel system
$$\pa_t u = \Delta u - \nabla \cdot (u \nabla \Kc_u), \quad -\Delta \Kc_u = u \quad \textup{in}\;\; \Rb^d,\; d = 3,4,$$
and derive the final blowup profile
$$ u(r,T) \sim c_d \frac{|\log r|^\frac{d-2}{d}}{r^2} \quad \textup{as}\;\; r \to 0, \;\; c_d > 0.$$
To our knowledge this provides a new blowup solution for the Keller-Segel system, rigorously answering a question by Brenner \textit{et al} in \cite{BCKSVnon99}. 
\end{abstract}

\bigskip

\section{Introduction.}
We are interested in the existence of blowup solutions to the Keller-Segel system 
\begin{equation}\label{sys:KS}
\left\{ \begin{array}{rl}
\pa_t u = &\Delta u - \nabla \Kc_u \cdot\nabla u + u^2, \\
0 = &\Delta \Kc_u + u, 
\end{array} \right. \quad x \in \Rb^d,
\end{equation}
where $u(t): x \in \Rb^d \to \Rb$ subject to initial data $u(0) = u_0$. The system \eqref{sys:KS} appears in many biological and astrophysical contexts. Here, $u(x,t)$ stands for the density of particles or cells and $\Kc_u$ is a self-interaction potential. In the two-dimensional case, it is used to model the so-called \textit{chemotaxis} phenomena in biology first introduced by Patlak \cite{PATbmb53} and Keller-Segel in \cite{KSjtb70} (see also \cite{KSjtb71a} \cite{KSjtb71b} for a derivation of a general model). In higher dimensional cases, the system \eqref{sys:KS} appears as a simplified model for self-gravitating matter in stellar dynamics, see for example  \cite{Warma92}, \cite{Wjam92},  \cite{SCPRSN02}, \cite{CSPRE11}  and \cite{DLMN13}. We refer to the papers \cite{Hjdmv03}, \cite{Cepj08} where the authors give a nice survey of mathematical derivation of \eqref{sys:KS} and related models. It is worth mentioning that the system \eqref{sys:KS} is a special case belonging to a much wider class of nonlocal aggregation equations including those with degenerate diffusion read as
\begin{equation}\label{eq:generalequation}
\pa_t u = \Delta A(u) + \nabla \cdot(B(u) \nabla \Kc \ast u), \quad  (x,t) \in \Rb^d \times [0, \infty), 
\end{equation}
where $A(u)$ and $B(u)$ can be nonlinear functions and $\Kc$ is an arbitrary local integrable function. In \eqref{sys:KS}, we have $A(u) = B(u) = u$ and $\Kc$ is the Newtonian potential. Besides covering a wide range of applications, equation \eqref{eq:generalequation} posses an interesting mathematical phenomenon already observed in \eqref{sys:KS}: the competition between the diffusion and the nonlocal aggregation for which we may have global existence or finite time singularity of solutions.

\subsection{What is known}
The local Cauchy problem for \eqref{sys:KS} can be solved in $L^\infty(\Rb^d)$ for the class of radially symmetric solutions, see for example Karch-Suzuki \cite{KSAM11}, Souplet-Winkler \cite{SWCMP19}, Winkler \cite{Wom23}. For the study of Cauchy problem for the general model \eqref{eq:generalequation}, we refer the works of Masmoudi-Bedrossian \cite{BMARMA14} (see also \cite{BRBN11}, \cite{BRDCDS14}, \cite{BCPDE15}), Biler-Karch-Pilarczyk \cite{BKPJDE19} and references therein.  Due to the singularity of the Newtonian potential at the origin, the existence of finite-time singularity (blowup) solutions and their mechanism are delicate, and this is precisely our focus in this paper.\\ 

\paragraph{Singularities in the classical nonlinear heat equation:} Neglecting the drift term in \eqref{sys:KS} gives the classical semilinear heat equation 
\begin{equation} \label{eq:NLH}
\pa_t u = \Delta u + |u|^{p-1}u \quad \textup{with} \;\; p = 2,
\end{equation}
where the existence of global-in-time or blowup solutions has received lots of attention in the last five decades, starting from the work of Fujita \cite{FUJsut66}. We refer to the book of Quittner-Souplet \cite{QSbook07} as a nice source of references on this subject.  Up to now, a fairly completed picture on the singularity formation for \eqref{eq:NLH} has been established, especially for the existence with a detailed description of the mechanism near the singularity. A very first  result  in this direction based on a numerical rescaling algorithm by Berger-Kohn \cite{BKcpam88} suggested that a stable (generic) blowup solution to \eqref{eq:NLH} is given by 
\begin{equation}\label{eq:profileNLH}
u(x,t) \sim \frac{1}{T- t}P\left( \frac{x}{\sqrt{(T-t)\log(T-t)}}\right), \quad P(\xi) = \frac{1}{1 + |\xi^2|/8}. 
\end{equation}
A rigorous construction was done by  Bricmont-Kupiainen \cite{BKnon94}, Merle-Zaag \cite{MZdm97}, and a refined description by Nguyen-Zaag \cite{NZens17}. A completed classification of all Type I blowup behaviors was established by Fillipas-Kohn \cite{FKcpam92}, Filippas-Liu \cite{FLaihn93}, Vel\'azquez \cite{VELcpde92}, Herrero-Vel\'azquez \cite{HVcpde92} where we see unstable blowup profiles. Here, a blowup solution to \eqref{eq:NLH} is of Type I if it satisfies 
$$\limsup_{t \to T} (T-t)\|u(t)\|_{L^\infty} < +\infty,$$
otherwise, the blowup is called Type II. The existence of Type II blowup solutions to \eqref{eq:NLH}  was established in some special range of $p = p(d)$, see for example \cite{FHVslps00}, \cite{Sjfa12}, \cite{PMWZdcds20}, \cite{Hihp20}, \cite{PMWjfa21}, \cite{PMWams19}, \cite{PMWZZarx20} \cite{HVcras94},\cite{Cmams18}, \cite{CMRjams19} and references therein. \\

\paragraph{Singularities in the Keller-Segel system:} We wonder whether the presence of the drift term in \eqref{sys:KS} would create a different blowup mechanism. Indeed, this was the case $d  =2$ where the system \eqref{sys:KS} is said to be $L^1$-critical in the sense that the rescaling function
\begin{equation}
\forall \lambda > 0, \quad u_\lambda(x,t) = \frac{1}{\lambda^2} u\Big(\frac{x}{\lambda}, \frac{t}{\lambda^2}\Big)
\end{equation}
solves the same system and preserves the same $L^1$-norm, 
$$\| u\|_{L^1(\Rb^2)} = \|u_\lambda\|_{L^1(\Rb^2)}.$$
The blowup is completely different from the nonlinear heat equation \eqref{eq:NLH} where only Type I blowup may occur (see \cite{GMSiumj04}). In the two-dimensional case, there is the critical mass threshold $8\pi$ to distinguish between global and blowup solutions. In particular, positive solutions with mass below the $8\pi$ exist for all time and converge to forward self-similar profiles (see \cite{CPZmjm04, DNRjde98, BDPjde06, biler20068pi, BCCjfa12, wei2018global, HYna24}). Solutions with mass $8\pi$ and finite second moment lead to concentration in infinite time \cite{BCMcpam08}, and constructive examples have been obtained in \cite{DPDMWarma24, GMcpam18}. Solutions with mass above $8\pi$ blow up in finite time, and the blowup rate for the single bubble concentration is quantized in \cite{CGMNcpam21, CGMNapde22} (see also \cite{HVma96}, \cite{Vsiam02}, \cite{Sna07} and \cite{RSma14} for earlier results) with
\begin{equation}
\|u(t)\|_{L^\infty(\Rb^2)} \sim \left[ \begin{array}{ll}  c_0(T-t)^{-1} \exp (\sqrt{2 |\ln(T-t)|}), \\
c_\ell(u_0) (T-t)^{-\ell} |\ln (T-t)|^\frac{\ell + 1}{ \ell - 1}, \quad \ell \in \mathbb{N}, \ell\geq 2.
\end{array}      \right. 
\end{equation}
We also have finite-time blowup solutions with a $16 \pi$ mass concentrating at a single point formed by a collision of two sub-collapses in the recent work \cite{CGMNarx24}. In higher dimensional cases $d \geq 3$, we have the existence of blowup solutions  established in \cite{CGMNjfa23} where the authors rigorously constructed a so-called collapsing-ring blowup solutions in the spirit of blowup for the nonlinear Schr\"odinger equation \cite{MRSduke14} and found the blowup rate 
\begin{equation}
\|u(t)\|_{L^\infty(\Rb^d)} \sim (T-t)^{-\frac{2d}{d-1}}. 
\end{equation}
This result was formally derived in \cite{HMVnonl97} for $d = 3$ and reestablished by Brenner \textit{et al} in \cite{BCKSVnon99} for $d \geq 3$ (with a formal analysis too). The authors of \cite{BCKSVnon99} also predicted many other blowup patterns for \eqref{sys:KS} in the higher dimensional cases. In particular, there are countably many backward self-similar solutions to \eqref{eq:urt} (see \cite{HMVjcam98}, \cite{Sfe05}, \cite{GMSarma11} and  \cite{SWCMP19}) of the form 
$$u(r,t) = \frac{1}{T-t}\Phi(y), \quad y = \frac{|x|}{\sqrt{T-t}},$$
where $\Phi$ solves
\begin{equation*}
0 = \Delta_d \Phi + \Big( \frac{1}{y^{d-1}}\int_0^y \Phi(\zeta) \zeta^{d-1} d\zeta \Big) \pa_y \Phi + \Phi^2 - \frac{1}{2}y\pa_y \Phi - \Phi.
\end{equation*}
The explicit solutions are given by
\begin{equation*}
\Phi_1 \equiv 0, \quad \Phi_2 \equiv 1, \quad \Phi_3 = \frac{2(d-2)}{y^2}, \quad \Phi_4 = \frac{1}{y^{d-1}}\pa_y\Big[ \frac{4(d-2)(2d + y^2) y^d}{(2(d-2) + y^2)^2} \Big].
\end{equation*}
In \cite{glogic2024stable}, Glogic and Sch\"orkhuber showed the stability of the solution with $\Phi_4$ profile in $H^3(\Rb^3)$. This stability result is then extended by Collot and Zhang in \cite{CZarxiv24} through a spectral analysis, where the authors established that all these self-similar profiles are stable along a set of initial data with finite Lipschitz codimension equal to the number of unstable eigenmodes of the associated linearized operator.

\subsection{Main result}
In this paper we exhibit a new type of blowup solutions to \eqref{sys:KS} that have not been observed in the literature to our knowledge (the possible occurrence of this solution was briefly mentioned in Section 4.3 of \cite{BCKSVnon99}).  Consider the space dimension 
$$d = 3,\; 4,$$
and restrict to the case of radially symmetric solutions. Then, for any smooth radial function $u \in L^\infty(\Rb^d)$, the potential term is defined as  
\begin{equation}
\pa_r\Kc_u(r) = -\frac{1}{r^{d-1}}\int_0^r u(\zeta) \zeta^{d-1} d\zeta, \quad r = |x|,
\end{equation}
and the system \eqref{sys:KS} is written as a nonlocal semilinear heat equation, 
\begin{equation}\label{eq:urt}
\pa_t u = \Delta_d u +\Big( \frac{1}{r^{d-1}}\int_0^r u(\zeta) \zeta^{d-1}d\zeta\Big) \pa_r u + u^2,
\end{equation}
where $u(t): \Rb_+ \to \Rb$ and $\Delta_d$ is the Laplacian acting on radial functions in $\Rb^d$, i.e.
$$\Delta_d = \pa_r^2 + \frac{d-1}{r}\pa_r.$$
Our main result is the following.  
\begin{theorem}[Existence of finite-time blowup solutions to \eqref{sys:KS}] \label{theo:1} Consider $d = 3, 4$ and let $\ell = \frac{d}{d-2}$ and $\alpha = \frac{d-2}{2d}$.  There exists radially symmetric initial data $u_0 \in L^{\infty}(\Rb_+)$ such that the corresponding solution to System \eqref{sys:KS} blows up in finite-time $T < \infty$ only at the origin and admits the following asymptotic dynamics. 
\begin{itemize}
\item[(i)] \textup{(Inner expansion)} 
\begin{equation}\label{exp:innerIntro}
u\big(y \sqrt{T-t}, t\big) = \frac{1}{T-t}\Big[ 1 - \frac{1}{B_\ell} \frac{\phi_{2\ell} (y) }{|\log (T-t)|} + o \big( \frac{1}{|\log (T-t)|}\big) \Big] \quad \textup{as} \quad t \to T,
\end{equation}
where  the convergence holds on any compact sets $\{ y \leq C\}$, the function $\phi_{2\ell}(y)$ is the polynomial of degree $2\ell$ satisfying $\Delta \phi_{2\ell} - \frac{1}{2\ell} y \pa_y \phi_{2\ell} + \phi_{2\ell} = 0$,
\begin{equation}\label{eq:B3B2Intro}
B_3 = 39360 \;\; \textup{for}\;\; (d, \ell) = (3,3) \quad \textup{and} \quad  B_2 = 576 \;\;\textup{for}\;\; (d, \ell) = (4,2).
\end{equation}

\item[(ii)] \textup{(Intermediate profile)} Let $Q$ be a positive function defined by 
\begin{equation}\label{def:QIntro}
\forall \xi \in \Rb_+, \quad  \frac{1 - dQ}{Q^\ell} = c_\ell \xi^{2\ell} \quad \textup{with} \quad  c_\ell = \frac{d^{\ell + 1}}{B_\ell(d + 2\ell)\ell^\ell} > 0.
\end{equation} 
Let $F(\xi) = dQ(\xi) + \xi Q'(\xi)$, we have 
\begin{equation} \label{exp:interProfileIntro}
\sup_{|y| \in \Rb^d} \left|(T-t) u\big(y\sqrt{T-t}, t\big) -  F\Big( \frac{y}{|\log (T-t)|^\frac{1}{2\ell}} \Big)  \right| \to 0 \quad \textup{as} \;\; t \to T.  
\end{equation}

\item[(iii)] \textup{(Final profile)} There exists $u^* \in \Cc(\Rb_+ \setminus \{0\}, \Rb)$ such that $u(r,t) \to u^*(r)$ as $t \to T$ uniformly on compact subsets of $\Rb_+\setminus \{0\}$, where 
\begin{equation}\label{exp:finalprofileIntro}
u^*(r) \sim  (d- 2) \left(\frac{2}{c_\ell} \right)^\frac{1}{\ell} \frac{|\log r| ^ \frac{1}{\ell}}{r^2} \quad \textup{as}\;\; r \to 0.
\end{equation}
\end{itemize}
\end{theorem}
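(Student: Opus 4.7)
First I would pass to the self-similar variables
\[
y = \frac{x}{\sqrt{T-t}}, \qquad s = -\log(T-t), \qquad w(y,s) = (T-t)\,u(x,t),
\]
turning \eqref{eq:urt} into a nonlocal perturbation of the Fujita equation,
\[
\pa_s w = \Delta w - \tfrac{1}{2}y\cdot\nabla w - w + w^2 + \Big(\tfrac{1}{y^{d-1}}\!\int_0^y w\,\zeta^{d-1}d\zeta\Big)\pa_y w,
\]
and look for a radial solution converging on compact sets to the constant ODE blowup $\hat w\equiv 1$. Linearization around $\hat w$ produces the operator $\Lc = \Delta - \tfrac{y}{2}\cdot\nabla + 1$, whose radial spectrum is $\{1-k/2\}_{k\in\mathbb{N}_0}$ with eigenfunctions given by radial Hermite polynomials; exactly $\ell+1$ of these modes are nonnegative and form the neutral/unstable subspace. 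The nonlocal drift is lower order near $y=0$ and does not alter the spectrum, but it redistributes mass among modes and will ultimately fix the value of $B_\ell$.

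\textbf{Formal profile.} The approximate solution I would take is
\[
w_{\textup{app}}(y,s) = 1 - \frac{\phi_{2\ell}(y)}{B_\ell\, s},
\]
where $\phi_{2\ell}$ is selected by the rescaled ODE $\Delta \phi_{2\ell} - \tfrac{1}{2\ell}y\pa_y\phi_{2\ell} + \phi_{2\ell} = 0$ that appears on the slow scale $z = y/s^{1/(2\ell)}$. Matching at order $s^{-1}$ in the leading region $|y|\sim s^{1/(2\ell)}$ generates a cubic/quartic resonance between the quadratic source $w^2$ and the Newtonian drift acting on $\phi_{2\ell}$; resolving it yields $B_3 = 39360$ and $B_2 = 576$. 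Zooming further to $\xi = y s^{-1/(2\ell)}$, the quasi-stationary balance of the drift with the $-w$ and transport terms integrates once to the algebraic relation $(1-dQ)/Q^\ell = c_\ell \xi^{2\ell}$ with $c_\ell = d^{\ell+1}/(B_\ell(d+2\ell)\ell^\ell)$, and $F(\xi) = dQ + \xi Q'$ arises as the intermediate profile interpolating between $w\equiv 1$ at $\xi = 0$ and a $\xi^{-2}$ tail at infinity.

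\textbf{Rigorous construction via topological shooting.} Following the Bricmont--Kupiainen/Merle--Zaag scheme, I would decompose $v := w - w_{\textup{app}}$ spatially into an inner zone $|y|\le K\sqrt s$ (analyzed in the Hermite basis of $\Lc$), an intermediate zone $K\sqrt s \le |y|\le 2K s^{1/(2\ell)}$ (pointwise against $F$), and an outer zone handled in a weighted $L^\infty$ norm against an auxiliary transport equation. In each zone I introduce a shrinking set in $s$ encoding the expected polynomial-in-$s$ decay of every Hermite coefficient and sup norm. Parameterizing initial data by the $\ell+1$ projections on the nonnegative modes of $\Lc$, a bootstrap argument will show that an exit from the shrinking set can occur only through these finitely many directions; a Brouwer-degree argument on the corresponding exit map then selects a parameter configuration whose trajectory stays inside forever, which gives the existence of the blowup solution and, via the projection onto $\phi_{2\ell}$, the inner expansion \eqref{exp:innerIntro}. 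Part (ii) is obtained by patching the three-zone estimates, which upgrade the compact-set convergence to the uniform-in-$y$ statement \eqref{exp:interProfileIntro}. For (iii), fixing $r>0$ and letting $t\to T$ in \eqref{exp:interProfileIntro}, the large-$\xi$ asymptotic $F(\xi)\sim (d-2)c_\ell^{-1/\ell}\xi^{-2}$ read off \eqref{def:QIntro}, together with the matching relation $r^2\sim(T-t)|\log(T-t)|^{1/\ell}$ (yielding $|\log(T-t)|\sim 2|\log r|$), cancels the $(T-t)^{-1}$ prefactor and produces \eqref{exp:finalprofileIntro}; existence of the pointwise limit $u^\ast$ off the origin follows from parabolic regularity applied to the $L^\infty$ bound in the outer zone.

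\textbf{Main obstacle.} The genuinely new difficulty, absent in the NLH analysis, is the nonlocal Newtonian drift $\big(\tfrac{1}{y^{d-1}}\int_0^y w\,\zeta^{d-1}d\zeta\big)\pa_y w$: being a convolution, it does not diagonalize in the Hermite basis and couples every mode to every other, breaking the usual spectral decoupling of the linearized ODEs. The approach I would rely on is to rewrite the drift via the potential $\Kc_w$ and integrate by parts against weighted Hermite functions, gaining one order of decay in $y$ and exhibiting the off-diagonal couplings as lower-order perturbations of the diagonal eigenvalues on the dominant modes. Calibrating the sizes in the shrinking set against these \emph{perturbed} diagonal rates rather than the bare $k/2-1$, and tracking carefully how the drift feeds back into the $\phi_{2\ell}$-mode, is the delicate step that closes the bootstrap and pins down the specific values of $B_\ell$.
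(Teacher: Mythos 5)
You claim that linearizing around $\hat w\equiv 1$ produces $\Lc = \Delta - \tfrac{y}{2}\cdot\nabla + 1$ with radial spectrum $\{1-k/2\}$, and that ``the nonlocal drift is lower order near $y=0$ and does not alter the spectrum.'' This is precisely where the paper departs from the nonlinear heat equation, and the claim is false. The constant solution $\bar w=1$ produces a \emph{nontrivial} linear potential in the drift: $\pa_y\Kc_{\bar w} = -\frac{1}{y^{d-1}}\int_0^y \zeta^{d-1}d\zeta = -\frac{y}{d}$, so the self-similar drift coefficient is $-(\frac12 - \frac1d)y = -\frac{d-2}{2d}y$, not $-\frac{y}{2}$. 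The correct linearized operator is therefore $\Ls_\alpha = \Delta_d - \alpha\, y\pa_y + 1$ with $\alpha = \frac{d-2}{2d}=\frac{1}{2\ell}$, whose radial spectrum is $\{1 - n/\ell\}_{n\in\mathbb{N}}$. The zero eigenvalue sits at $n=\ell = \frac{d}{d-2}$, which is an integer exactly when $d=3$ or $d=4$; this is the \emph{source} of the logarithmic correction to the blowup scale and the reason the result is restricted to these dimensions. Your version has a zero mode at $k=2$ regardless of $d$ and has no internal reason for the slow variable to be $\xi = y/s^{1/(2\ell)}$ — the $\frac{1}{2\ell}$ you introduce ad hoc in the profile ODE is, in the paper, exactly the coefficient $\alpha$ in the linearized operator, and your $\ell+1$ nonnegative modes do not come out of the spectrum $\{1-k/2\}$ either.

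\textbf{Two further issues.} First, your intermediate zone $K\sqrt s\le |y|\le 2Ks^{1/(2\ell)}$ is empty for large $s$ since $s^{1/(2\ell)}\ll\sqrt s$ when $\ell\ge 2$; the paper's zones are $y\le 2K$ (inner, controlled in $L^2_\rho$), $K\le y\le 2K s^{1/(2\ell)}$ (intermediate, controlled in a weighted $L^2$ norm $\|\cdot\|_\flat$), and $y\ge Ks^{1/(2\ell)}$ (outer, controlled by the $\Hs_{1/2}$ semigroup). Second, your ``main obstacle'' — that the Newtonian drift does not diagonalize in the Hermite basis — is not in fact where the difficulty lies in the paper. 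The paper passes to the \emph{partial mass} variable $v(y,s)=\frac{1}{y^d}\int_0^y w(\zeta,s)\zeta^{d-1}d\zeta$, which turns the nonlocal equation into the \emph{local} semilinear equation $\pa_s v = \Delta_{d+2}v - \tfrac12 y\pa_y v - v + dv^2 + yv\pa_y v$; the nonlocality disappears entirely. The genuine obstacle is instead the absence of a complete spectral theory for the full linearized operator $\Hs$ in the intermediate region where $Q(\xi)$ interpolates between $\frac1d$ and $0$; the paper circumvents this with a monotonicity/energy estimate in $\|\cdot\|_\flat$ exploiting the strict monotonicity of $Q$. Your scheme of tracking off-diagonal Hermite couplings would not close without this device, since the coupling is not perturbative when $\xi\sim 1$.
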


\begin{remark}[New blowup profile] One of the significant contribution of this work is the construction of a \textit{new} blowup profile \eqref{def:QIntro} with a log correction to the blowup variable 
\begin{equation} \label{def:xiIntro}
\xi  =  \frac{r}{\sqrt{T-t} \; |\log(T-t)|^\frac{1}{2\ell}} \quad \textup{with} \quad \ell \geq 2.
\end{equation}
Let us mention that Brenner \textit{et al} \cite{BCKSVnon99} wondered whether blowup solutions exist with this particular scaling for any dimension $d \geq 3$, not necessarily with the same power we get here for the log correction  (see Section 4.3 in that paper). In particular, they did not provide the exact power of the log correction (i.e. $\frac{1}{2\ell}$), as we found here for $d = 3$ or $d = 4$.  Since the logarithmic correction in \eqref{def:xiIntro} is a consequence of the appearance of the zero eigenvalue in the spectrum of the linearized operator $\Hs_{\frac 1{2\ell}} + \textup{Id}$ in the radial setting (see \eqref{def:H12lintro}), which is given by $\big\{\lambda_k = 1 - \frac{k}{\ell}, k \in \mathbb{N}\big\}$, we believe that the blowup mechanism described in Theorem \ref{theo:1} can not appear in the case $d \geq 5$ as $k = \ell = \frac{d}{d-2} \not \in \mathbb{N}$ if $d \geq 5$ (see Remark \ref{remark:specofLs}). 

The appearance of the \textit{new} blowup scale given in \eqref{def:xiIntro} shows a strong influence of the drift-term to the blowup dynamic of \eqref{sys:KS}. Recall from \eqref{eq:profileNLH} that the stable blowup scale for \eqref{eq:NLH} is given with $\ell = 1$ and the intermediate profile is explicitly given.  As a consequence, the existing analysis developed for \eqref{eq:NLH} can not be straightforward implemented to \eqref{sys:KS}, although the general framework for the construction remains the same once the blowup is concerned.

It is worth remarking that the final blowup profile derived in \cite{SWCMP19} for the class of radially symmetric \textit{decreasing} solutions satisfies
$$
c_1r^{-2} \leq u(r,T) \leq c_2 r^{-2} \quad \textup{and} \quad u(x,T)  \leq C(T-t + r^2)^{-1}.
$$
Recall that our constructed solution is radially symmetric, but not a decreasing function. Indeed, the inner expansion \eqref{exp:interProfileIntro} involves a Hermite type polynomial of degree $2\ell$ which changes signs on the set $y \in (0, y_0)$ for $y_0\gg 1$. Hence, our intermediate and final blowup profiles \eqref{exp:interProfileIntro} and \eqref{exp:finalprofileIntro} are excluded from what described in \cite{SWCMP19} and in agreement with the description of \cite{GMSarma11} asserting that all Type I blowup solutions are asymptotically backward self-similar. 
\end{remark}

\begin{remark}[Co-dimensional stability] The initial data we consider in the construction depends on $\ell$ parameters $(d_i)_{0 \leq i \leq \ell-1}$ (see \eqref{def:intitialdata_q} for a proper definition) to control $\ell$ growing eigenmodes of the linearized operator $\Hs_{\frac{1}{2\ell}} + \textup{Id}$ in the radial setting (see \eqref{def:H12lintro} for its definition). One parameter can be eliminated by the translation in time invariance of the problem, so it remains $(\ell - 1)$ eigenmodes to be handled. Roughly speaking, our constructed solution is $(\ell - 1)$ co-dimension stable in the sense that if we fix those $(\ell-1)$ unstable directions and perturb only the remaining components of the solution (see Definition \eqref{Definition-shrinking -set} for a definition of solution decomposition and a bootstrap regime to control them), the solution still admits the same behavior as described in Theorem \ref{theo:1}.
We expect a similar co-dimensional stability still holds in the non-radial setting, where the spectrum of the linearized operator $\Hs_{\frac{1}{2\ell}} + \textup{Id}$ is given by $\big\{\lambda_k = 1 - \frac{k}{2\ell}, k \in \mathbb{N}\big\}$, where $\lambda_k$ has the multiplicity $m(\lambda_k) = m(k,d)$. Hence, we have a total of $\sum_{k = 0}^{2\ell} m(k,d)$ unstable and null modes, in which we can eliminate $1 + d$ unstable directions thanks to the time and space translation invariance of the problem.
\end{remark}

\begin{remark}[Extensions and related problems] From our analysis, we suspect that the blowup scale with a log correction \eqref{def:xiIntro} can only occur in dimensions $d = 3$ and $d = 4$. In fact, the appearance of such a log correction is strongly related from our point of view to the presence of a zero eigenvalue for the linearized operator defined below in \eqref{def:Ls_alp}, which occurs only for $d = 3,4$ (see Remark \ref{remark:specofLs}). Other blowup scales without a log correction are suspected to exist similarly as in the nonlinear heat equation \eqref{eq:NLH} where a negative eigenmode of the linearized operator is assumed to be dominant in the inner expansion \eqref{exp:innerIntro}. 

The suppression of blowup in \eqref{sys:KS} by modifying the nonlinearity has been an interesting direction recently, see for an example \cite{KXarma16}, \cite{BHsiam17}, \cite{HTarma19}, \cite{IXZtams21}. We remark that our construction actually works for the nonlinear perturbation problem where we consider the nonlinearity $u^2$ added a perturbation $f(u)$, namely the problem 
$$\pa_t u= \Delta u - \nabla \Kc_u \cdot \nabla u + u^2 + f(u),$$
where $f$ satisfies the growth condition 
$$|f(u)| \leq C (1 + |u|^q)\;\; \textup{for}\;\; 0 \leq q < 2, \quad \textup{or} \quad f(u) = \epsilon u^2 \;\; \textup{for}\;\; 1 + \epsilon > 0.$$
The first assumption turns to be exponentially small in the self-similarity setting  \eqref{def:selfsimilarity} and the contribution from this small nonlinear term is neglectable. The later assumption ensures that the associated ODE $u' = (1 + \epsilon)u^2$ still blows up in finite time, hence the intermediate blowup profile \eqref{exp:finalprofileIntro} and the final profile \eqref{exp:finalprofileIntro} are modified with the factor $\frac{1}{1 + \epsilon}$. 
We suspect there is a critical value $\epsilon_* = \epsilon_*(d) > 0$ so that for $\epsilon \leq - \epsilon_*$ would prevent blowup to happen, or if blowup does occur, its dynamic would be completely different from what established in this paper. 

The analysis presented in this work is expected to be applicable to the general equation \eqref{eq:generalequation} with $A(u) = u$ and $B(u) = u^{p-1}$ for $p > 1$ or $A(u) = u^m$ for $m > 0$ and $B(u) = u$ up to some technicalities. The later case is an interesting model (\textit{porous medium} type equation) used to describe gravitational collapse phenomena (see \cite{CSPRE11}).

It is worth mentioning that the problem \eqref{sys:KS} in the radial setting has a similar form to a nonlocal parabolic problem formally analyzed in \cite{Gjmp08} (see (3.1) and (3.2)) arising in the study of blowup singularities in the 3D axisymmetric Navier-Stokes equation. We expect the framework developed in this present work would be implemented for a rigorous analysis initiated in \cite{Gjmp08}. 
\end{remark}

\subsection{Strategy of the analysis}
\noindent We briefly describe the idea of the proof of Theorem \ref{theo:1} consisting of the following steps. \\

\noindent - \textit{Renormalization in the self-similar variables:} We consider the change of variables 
\begin{equation*}
u(r,t) = \frac{1}{T-t}w(y,s), \;\; y = \frac{r}{\sqrt{T-t}}, \;\; s = -\log(T-t),
\end{equation*}
and introduce the partial mass setting 
$$v(y,s) = \frac{1}{y^d}\int_0^y w(\zeta, s)\zeta^{d-1}d\zeta, \qquad w = \frac{1}{y^{d-1}}\pa_y(y^d v).$$
where $v$ solves the semilinear heat equation 
\begin{equation}
\pa_s v = \Delta_{d+2} v -\frac{1}{2}y\pa_y v + dv^2 + yv \pa_y v. 
\end{equation}
We note that such a transformation is just to simplify the analysis, and emphasize that the strategy and main idea remain the same once we work with the equation satisfied by $w$. \\

\noindent - \textit{Linearization}: Through a formal computation of the blowup profile $Q$ given in Section \ref{sec:blowupprofile}, we introduce the linearization 
$$v(y,s) = Q(\xi) + \vep(y,s), \quad \xi = \frac{y}{s^\frac{1}{2\ell}},$$
where $\vep$ solves the linearized problem 
\begin{equation}
\pa_s \vep = \Hs \vep + NL(\vep) + E,
\end{equation}
where $NL$ is a quadratic nonlinear term, $E$ is a generated error and $\Hs$ is the linearized operator
$$\Hs = \Delta_{d+2} - \Big(\frac{1}{2} - Q(\xi)\Big)y\pa_y  + \big(2dQ -1 + \xi \pa_\xi Q(\xi)\big).$$
We observe that $\Hs$ behaves differently depending on the behavior of the profile $Q(\xi)$: \\
- For $y \gg s^\frac{1}{2\ell} (\xi \gg 1)$ , we have by the decaying property $|Q(\xi)| + |\xi \pa_\xi Q(\xi)| = \Oc(\xi^{-2})$,  the linear operator $\Hs$ behaves like $\Delta_{d+2} - \frac{1}{2}y\pa_y  - \textup{Id}$, which has fully negative spectrum. \\
- For $y \ll s^\frac{1}{2\ell} (\xi \ll 1)$, we have by the asymptotic behavior $\big|Q(\xi) - \frac{1}{d}\big| + |\xi \pa_\xi Q(\xi)|  = \Oc(\xi^2)$,  the linear operator $\Hs$ behaves like $\Delta_{d+2} - \frac{1}{2\ell }y\pa_y  + \textup{Id}$, which has $\ell$ positive eigenvalues, a zero eigenvalue and infinity many negative ones. \\
- For $y \sim s^\frac{1}{2\ell} (\xi \sim 1)$, this is the transition region (intermediate zone) that we do not have any asymptotic simplification of $\Hs$. This is one of the major difficulties of the paper.  Indeed, it makes one of the main differences with respect to the analysis for the nonlinear heat equation \eqref{eq:NLH} that results in a different approach presented in this paper. \\

\noindent - \textit{Decomposition and control of the flow in three different regions:} Based on the behavior of the linearized operator $\Hs$, we split the control of $\vep$ into three regions: for a fixed large constant $K \gg 1$,  \\
- The outer region $y \geq K s^\frac{1}{2\ell} (\xi \geq K)$: Since $\Hs$ behaves like the one with fully negative spectrum, the estimate of $\vep$ in this region is straightforward by using the semigroup associated to the linear operator $\Delta_{d+2} - \frac{1}{2}y \pa_y$ (see Section \ref{sec:outer}), 
\begin{equation*}
j =0, 1, \quad \| (y \pa_y)^j \vep(y,s) \mathbf{1}_{\{\xi \geq K\}}\|_{L^\infty} \lesssim \|(y \pa_y)^jE(s)\|_{L^\infty} + \| (y \pa_y)^j\vep(y,s) \mathbf{1}_{\{\xi \sim K\}}\|_{L^\infty},
\end{equation*}  
where $\|(y \pa_y)^jE(s)\|_{L^\infty} \lesssim s^{-\frac{1}{\ell}}$ is the typical size of the generated error. We need the information from the intermediate region for the boundary term located on $\xi \sim K$ to completely close the estimate in the outer region. \\
- The intermediate region $K \leq y \leq 2K s^\frac{1}{2\ell}$, we control the solution in the weighted $L^2$ norm, 
$$\|\vep(s)\|_\flat^2 = \int_K^\infty  \frac{|\vep(y,s)|^2}{y^{4\ell + 2}} \frac{dy}{y}.$$
(we can replace the weight $y^{4\ell + 2}$ by $y^{2k}$ for any $k \geq 2\ell +1$ with an improved refinement of the generated error). Thanks to the monotone property of the profile $Q$ and the dissipative structure of the parabolic equation, we are able to arrive at the monotonicity formula  (see Lemma \ref{lemm:mid})
\begin{equation}
j = 0, 1, 2, \quad  \frac{d}{ds} \|(y\pa_y)^j \vep\|_\flat^2 \leq -\delta_0 \|(y\pa_y)^j \vep\|_\flat^2 + \|(y\pa_y)^j E\|_\flat^2  + \|(y\pa_y)^j \vep \mathbf{1}_{ y \sim K}\|_\flat^2, 
\end{equation}
where $\|(y\pa_y)^j E\|_\flat^2 \lesssim s^{-2 - \frac{3}{\ell}}$ is the size of the error term. By Sobolev inequality, we obtain a pointwise estimate $|\vep(y,s)| \lesssim s^{-1 - \frac{3}{2\ell}} (|y|^{2\ell + 1} + 1)$ that provides the necessary  information of $\vep$ at the boundary $y \sim s^\frac{1}{2\ell}$ to complete the estimate in the outer region. It is  worth mentioning that in the case of the nonlinear heat equation \eqref{eq:NLH}, this kind of pointwise estimate can be directly achieved by using a semigroup approach. However, we are not able to follow that approach for the Keller-Segel equation \eqref{sys:KS} due to the lack of knowledge on semigroup theory associated to $\Hs$. Here, we still need the information of $\vep$ at the boundary $y \sim K$ to completely close the estimate of $ \|(y\pa_y)^j \vep\|_\flat$ after a forward integration in time. \\
- The inner region $y \leq 2K$, the linearized operator $\Hs$ is regarded as a perturbation of
\begin{equation} \label{def:H12lintro}
\Hs_\frac{1}{2\ell} + \textup{Id} \quad \textup{with} \quad   \Hs_\frac{1}{2\ell}:=\Delta_{d+2} - \frac{1}{2\ell} y\pa_y.
\end{equation}
The operator $\Hs_\frac{1}{2\ell}$ is self-adjoint in $L^2_\rho$ with the exponential weight $\rho = \exp( - \frac{y^2}{4\ell}) y^{d+1}$. In the radial setting, we recall that $\Hs_\frac{1}{2\ell} + \textup{Id}$ possesses $\ell$ positive eigenvalues, a zero eigenvalue, and an infinity many negative ones, we further decompose 
$$\vep(y,s) = \vep_\natural(y,s) + \tilde \vep(y,s), \quad \vep_\natural(y,s) = \sum_{k = 0}^{2\ell-1} \vep_k(s) \varphi_{2k}(y), \quad \langle \tilde{\vep},\varphi_{2k}\rangle_{L^2_\rho} =0 \; \textup{for} \; k = 0, \cdots, 2\ell-1, $$
where $\varphi_{2k}$ is the eigenfunction of $\Hs_\frac{1}{2\ell}$ corresponding to the eigenvalue $- \frac{k}{\ell}$ and $\tilde{\vep}$ solves the equation 
$$\pa_s \tilde{\vep} = \Hs_\frac{1}{2\ell}\tilde{\vep} + \tilde{\vep}  + \sum_{k = 0}^{2\ell -1} \big[-\vep_k' + \big( 1 - \frac{k}{\ell}\big) \vep_k \big] \varphi_{2k}(y) + R + \tilde{\Vc}(\tilde \vep) + NL(\tilde \vep),$$
where $\tilde \Vc$ and $NL$ are small linear and nonlinear terms. Using the spectral gap $\langle \Hs_\frac{1}{2\ell} \tilde \vep, \tilde{\vep} \rangle_{L^2_\rho} \leq -2 \| \tilde{ \vep}\|^2_{L^2_\rho}$ and a standard energy estimate, we end up with (see Lemma \ref{lemm:L2rho})
$$ \frac{d}{ds}\| \tilde{ \vep}\|^2_{L^2_\rho} \leq - \delta \| \tilde{ \vep}\|^2_{L^2_\rho} + \| R \|^2_{L^2_\rho}, \quad \textup{with} \quad \| R\|_{L^2_\rho} \lesssim s^{-3}.$$
As for the finite-dimensional part, we simply obtain by a projection onto the eigenmode $\varphi_{2k}$, 
$$\big|-\vep_k' + \big( 1 - \frac{k}{\ell}\big) \vep_k \big| \lesssim \| \tilde{\vep}\|_{L^2_\rho} + s^{-2}, \quad  \big|\vep_\ell' + \frac{2}{s}\vep_\ell \big| \lesssim \| \tilde{\vep}\|_{L^2_\rho} + s^{-3}. $$ 
The equation of $\vep_\ell$ is delicate as it is related to the projection onto the null mode where the contribution from the small potential term must be taken into account to produce the factor $\frac{2}{s}$ as well as an algebraic cancellation in the projection of the error term onto the null mode to reach $\Oc(s^{-3})$. Those calculations get used of the precise value $B_\ell$ given in \eqref{eq:B3B2Intro} (see Lemma \ref{lemm:finitepart}).  A forward integration in time yields the estimate 
$$\|\tilde \vep\|_{L^2_\rho} \lesssim s^{-3}, \quad |\vep_k(s)| \lesssim s^{-2} \; \textup{for} \;\ell + 1 \leq k \leq 2\ell -1, \quad |\vep_\ell(s)| \lesssim \frac{\log s}{s^2}.$$
The remaining growing modes $\big(\vep_k(s)\big)_{0 \leq k \leq \ell-1}$ are then controlled by a topological argument where we need to construct initial data for which these components converge to zeros as $s \to \infty$. The $L^2_\rho$ estimate provides information on compact sets of $y$, where we can get the estimate of $\vep(y,s)$ for $y \sim K$ to close the estimate for the intermediate region. This decomposition is detailed in the definition of bootstrap regime \eqref{Definition-shrinking -set} in which we successfully construct solutions admitting the behavior as described in Theorem \ref{theo:1}. \\

We organize the rest of the paper as follows: In Section \ref{sec:blowupprofile} we perform a formal spectral analysis to obtain the blowup profile. We formulate the linearized problem in Section \ref{sec:linear} and define a bootstrap regime to control the remainder. In Section \ref{sec:ControlBs} we control the remainder in the bootstrap regime and prove that the solution of the linearized problem is trapped in this regime for all time from which we conclude the proof of the main theorem. \\

\medskip

\paragraph{Acknowlegments:} V.T. Nguyen is supported by the National Science and Technology Council of Taiwan (ref. 111-2115-M-002-012 and ref. 112-2628-M-002-006). A part of this work was done when V.T. Nguyen visited the Universit\'e Paris Dauphine and he wants to thank the institution for their hospitality and support during his visit. H. Zaag is supported by the ERC Advanced Grant LFAG/266
“Singularities for waves and fluids”. The authors would also like to thank the anonymous referees for their helpful comments and suggestions.

\section{Formal derivation of the blowup profile} \label{sec:blowupprofile}
In this section we formally derive the blowup profile through a spectral approach that will be rigorously implemented in the next section. As you will see, the profile with a logarithmic correction to the blowup scale only appears in the case of dimension $d = 3$ or $d = 4$ for which the linearized operator poses zero eigenvalues. We work with the self-similar variables
\begin{equation}\label{def:selfsimilarity}
u(r,t) = \frac{1}{(T-t)} w(y,s), \quad \Kc_u(x,t) = \Kc_w(y,s), \quad y = \frac{r}{\sqrt{T-t}}, \quad s = - \log(T-t),
\end{equation}
where $w$ solves the equation for $ (y,s) \in \Rb_+ \times [-\log T, +\infty)$,
\begin{equation}\label{eq:wys}
\pa_s w = \Delta_d w - \left( \pa_y\Kc_w + \frac{1}{2} y \right)\pa_y w  - w + w^2, 
\end{equation}
with $\Delta_d$ being the Laplacian in $\Rb^d$ acting on radial functions, i.e.  
$$\Delta_d = \pa_y^2 + \frac{d-1}{y}\pa_y,$$
and the potential term is defined by 
\begin{equation}\label{eq:exprePhiy}
\pa_y \Kc_w(y,s) = - \frac{1}{y^{d-1}}\int_0^y w(\zeta,s) \zeta^{d-1} d\zeta. 
\end{equation}
A linearization of $w$ around the nonzero constant solution 
$$ \bar w = 1, \qquad -\pa_y \Kc_{\bar w}= \frac{y}{d},$$
leads to the linearized equation  for $q = w - \bar w, \quad \Kc_q = \Kc_w - \Kc_{\bar w}$:
 \begin{equation}\label{eq:qys}
\pa_s q = \Ls_\alpha q  - \pa_y \Kc_q \pa_y q + q^2, \quad (y,s) \in \Rb_+ \times [-\log T, +\infty), 
\end{equation}
where 
\begin{equation}\label{def:Ls_alp}
\Ls_\alpha = \Delta_d - \alpha\; y \pa_y +1 \quad \textup{with} \quad  \alpha = \frac{d-2}{2d}. 
\end{equation}
\paragraph{Spectral properties of $\Ls_\alpha$:} The linear operator $\Ls_\alpha$ is formally a self-adjoint operator in $L^2_\omega(\Rb_+)$ with the weight function 
\begin{equation}\label{def:omega}
\omega(y) = c_d y^{d-1} e^{-\frac{\alpha |y|^2}{2}},\qquad \int_0^\infty \omega (y) dy=1,
\end{equation}
where $c_d = 2^{\frac d2-1}\alpha^{-\frac d2}\Gamma \left (\frac d2\right )$ to normalize $\|\omega\|_{L^1} = 1$, $\Gamma$ is the Gamma function. The  spectrum of $\Ls_\alpha$ is discrete 
\begin{equation}\label{def:specLsalpha}
\textup{spec}(\Ls_\alpha) = \{ \lambda_{2n} = 1 - 2 n \alpha, \; n \in \mathbb{N}\},
\end{equation}
and the corresponding eigenfunction $\phi_{2n}$ is a polynomial of degree $2n$ that satisfies
\begin{equation}\label{eq:phi2n}
\Delta_d \phi_{2n} - \alpha y \pa_y \phi_{2n}= -2n \alpha \phi_{2n}.
\end{equation}
In particular, we have the close form of $\phi_{2n}$ given by 
\begin{equation}\label{def:phi2ell}
\phi_{2n}(y)  = H_n(z) =  \sum_{k = 0}^n A_{n, k} z^{k}  \quad \textup{with} \quad  z =2\alpha y^2,
\end{equation}
where $H_n$ is the regular solution to the Kummer-type ODE 
\begin{equation}\label{eq:Hz}
4zH_n'' + \big(2d- z\big) H_n' + n H_n = 0. 
\end{equation}
and $A_{n,k}$'s satisfy the recurrence relation
\begin{equation}\label{def:Ank}
 A_{n,n} = 1, \quad  A_{n, k - 1} =  - \frac{2k (2k + d - 2)}{n - k + 1} \; A_{n, k} \;\; \textup{for}\;\; k = 1, ..., n.
 \end{equation}
We note the orthogonality identity 
\begin{equation}\label{eq:ortho}
\int_0^\infty \phi_{2n}(y) \phi_{2m}(y) \omega(y) dy = (2\alpha)^{-\frac{d-2}{2}}\int_0^\infty H_n(z)H_m(z) z^{\frac{d-1}{2}} e^{-\frac{z}{4}}  dz =    a_{n}\delta_{n,m},  
\end{equation}
where $\delta_{n,m} = 0$ if $n \ne m$ and $\delta_{n,m} = 1$ if $n = m$. 

\remark[The zero mode] \label{remark:specofLs} Consider $\ell \in \mathbb{N}$ such that $\lambda_{2\ell} = 1 - 2 \alpha \ell = 0$, which gives $\ell = \frac{1}{2\alpha} = \frac{d}{d  -2} \in (1,3]$. We see that there are only two cases for which $\ell$ is an integer number:
\begin{equation*}
\ell = 3 \;\; \textup{for}\;\; d = 3 \quad \textup{and}  \quad \ell = 2 \;\; \textup{for}\;\; d = 4.
\end{equation*}
\remark[The first few eigenfucntions] We list here the first few eigenfunctions (generated by Matlab symbolic) served for our computation later:\\
-  for $d = 3$,
\begin{align*}
&H_0(z) = 1, \qquad \quad H_1(z) = z - 6, \qquad \quad H_2(z) = z^2 - 20 z + 60,\\
& H_3(z) = z^3 - 42z^2 + 420 z - 840,\\
&H_4(z) = z^4 - 72z^3 + 1512z^2 - 10080z + 15120,\\
& H_5(z) = z^5 - 110z^4 + 3960z^3 - 55440z^2 + 277200z - 332640,\\
& H_6(z) = z^6 - 156z^5 + 8580z^4 - 205920 z^3 + 2162160 z^2 - 8648640z + 8648640,
\end{align*}
- for $d = 4$,
\begin{align*}
&H_0(z) = 1, \quad H_1(z) = z - 8, \quad H_2(z) = z^2 - 24 z + 96, \\
&H_3(z) = z^3 - 48z^2 + 576z - 1536,\\
&H_4(z) = z^4 - 80z^3 + 1920z^2 - 15360z + 30720.
\end{align*}
We can expand an arbitrary polynomial  $P_n(z)$ in terms of $\sum_{k = 0}^nH_k(z)$ through the inverse
\begin{equation}
\begin{pmatrix}
1\\ z \\ z^2\\ \vdots \\ z^n
\end{pmatrix} = \Dc_{n}^{-1} \begin{pmatrix}
H_0\\ H_1 \\ H_2 \\ \vdots \\ H_n
\end{pmatrix}   \quad \textup{with} \quad \Dc_{n} = \begin{pmatrix}
1\\
A_{1,0} & 1\\
A_{2,0}& A_{2,1} & 1\\
\vdots & \vdots &  & \ddots\\
A_{n,0} & A_{n, 1} & A_{n, 2}& \cdots & 1
\end{pmatrix},
\end{equation}
where $A_{i,j}$ is given by \eqref{def:Ank}.  A direct  check yields
\begin{equation}\label{re:zntoHn}
\Dc_n^{-1} =\{ |A_{i,j}|\}_{1 \leq i,j \leq n}, \quad z^n = \sum_{k = 0}^n |A_{n,k}| H_k(z),
\end{equation}
from which and the orthogonality \eqref{eq:ortho} imply
\begin{equation}
\int_0^\infty y^{2n} \phi_{2m}(y) \omega(y) dy = 0 \quad \textup{for}\;\; m \geq n+1. 
\end{equation}

\bigskip

\paragraph{Appearance of Type I log-profile:} Consider  
$$(d, \ell) = (3,3) \quad \textup{and} \quad (d, \ell) = (4,2).$$
We  decompose $q(y,s)$ according to the eigenspace of $\Ls_\alpha$, namely that
\begin{align*}
q(y,s) & = \sum_{k \in \mathbb{N} } a_{k}(s)\phi_{2k}(y) \equiv \sum_{k \in \mathbb{N}} a_k(s) H_k(z), \quad z = 2\alpha y^2.
\end{align*} 
Since $\sum_{k \geq \ell + 1} a_{k}(s) \phi_{2k}(y)$ is the projection of $q(y,s)$ on the negative mode of $\Ls_\alpha$, we may  ignore it in this formal derivation, meaning that we only consider the ansatz
\begin{equation}\label{decomp:qtruncated}
q(y,s) = \sum_{k \leq \ell}a_{k}(s) \phi_{2k}(y).
\end{equation}
By assuming  the zero mode is dominant in the sense that 
\begin{equation}\label{eq:assumall}
|a_{k} (s)| \ll |a_{\ell}(s)| \;\; \textup{for}\;\; k \leq \ell -1,
\end{equation}
we  plugin this ansatz  into \eqref{eq:qys} and project it onto the eigenmode $\phi_{2j}$ for $j = 0,..., \ell$:
\begin{align*}
a_{j}' = (1 - 2j \alpha) a_{j} + \| \phi_{2j} \|_{L^2_\omega}^{-2} \left \langle NL, \phi_{2j}\right\rangle_{L^2_\omega},
\end{align*}
where 
$$NL = -\Big(\sum_{k \leq \ell} a_{k} \pa_y \phi_{2k}\Big)\Big(\sum_{k \leq \ell}a_{k}\pa_y\Kc_{\phi_{2k}} \Big) + \Big(\sum_{k \leq \ell}a_{k}\phi_{2k}\Big)^2.$$
From \eqref{eq:assumall} and the fact the $\int_0^\infty P_n(y) \omega(y)dy = \Oc(1)$, we see that 
$$\| \phi_{2j} \|_{L^2_\omega}^{-2}\left \langle NL, \phi_{2j}\right\rangle_{L^2_\omega} = \Oc\big(a_{\ell}^2\big) \quad \textup{and} \quad \| \phi_{2\ell} \|_{L^2_\omega}^{-2}\left \langle NL, \phi_{2\ell}\right\rangle_{L^2_\omega} = a_{\ell}^2 B_{\ell} + o\big(a_{\ell}^2\big),$$
where 
$$B_{\ell} =  \| \phi_{2\ell} \|_{L^2_\omega}^{-2}\left \langle - \pa_y\phi_{2\ell} \pa_y \Kc_{\phi_{2\ell}} + \phi_{2\ell}^2, \phi_{2\ell}\right\rangle_{L^2_\omega}.$$
To compute $B_{\ell}$, we simply expand 
\begin{equation}\label{eq:expNL}
- \pa_y\phi_{2\ell} \pa_y \Kc_{\phi_{2\ell}} + \phi_{2\ell}^2 = \sum_{k = 0}^{2\ell} B_{k} \phi_{2k}(r) \equiv \sum_{k = 0}^{2\ell} B_{k} H_k(z),
 \end{equation}
from which we directly obtain the constant $B_\ell$ by the orthogonality  \eqref{eq:ortho}. \\

\label{sec:computeBell}
\paragraph{Compute the constant $B_\ell$ in \eqref{eq:expNL}:} From the definition $\phi_{2\ell}(y) = H_\ell(z)$ with $z = 2\alpha r^2$, we have
\begin{equation*}
8\alpha z^{\frac{d}{2}} \pa_z \Kc_{H_\ell} = - \int_0^z H_\ell(\xi) \xi^{\frac{d-2}{2}} d\xi. 
\end{equation*} 
We then write
\begin{align*}
- \pa_y\phi_{2\ell} \pa_y \Kc_{\phi_{2\ell}} + \phi_{2\ell}^2 &= \frac{1}{y^{d-1}} \pa_y \big(  y^{d-1}  \phi_{2\ell}\pa_y \Kc_{\phi_{2\ell}} \big) =  \frac{1}{ z^{\frac{d-2}{2}}}\pa_z \left(H_\ell \int_0^z H_\ell(\xi) \xi^{\frac{d-2}{2}} d\xi \right).
\end{align*}
For the case  $(d =3, \ell = 3)$, we have 
\begin{align}
- \pa_y\phi_{2\ell} \pa_y \Kc_{\phi_{2\ell}} + \phi_{2\ell}^2 &= \frac{1}{\sqrt{z}} \pa_z \left( H_3 \int_0^z H_3(\xi) \sqrt{\xi} d\xi\right) \nonumber\\
& = \frac 53 z^6 - \frac{416}{3}z^5 + \frac{12628}{3}z^4 - 57792z^3 + 364560 z^2 - 940800 z + 705600 \nonumber \\
& =  \sum_{k = 0}^6 B_k H_6(z),\label{eq:B3ex}
\end{align}
where the second line is computed by Matlab symbolic, and we have from \eqref{re:zntoHn} the value of $B_3$ is given by 
\begin{equation}\label{def:valueB3}
B_3 = \frac{5}{3}|A_{6,3}| - \frac{416}{3}|A_{5,3}| + \frac{12628}{3}|A_{4,3}| - 57792 = 39360. 
\end{equation}
Thus, the ODE satisfied by $a_3$ is 
\begin{equation}
a_3' \sim B_3 a_3^2,  \qquad \textup{hence,}\quad  a_3(s) \sim -\frac{1}{B_3 s}.
\end{equation}
For the case $(d = 4, \ell = 2)$, we similarly compute 
\begin{align}
- \pa_y\phi_{2\ell} \pa_y \Kc_{\phi_{2\ell}} + \phi_{2\ell}^2 &= \frac{1}{z} \pa_z \left( H_2 \int_0^z H_2(\xi) \xi d\xi\right) \nonumber\\
& = \frac 32 z^4 - 70 z^3 + 1056 z^2 - 5760  z + 9216 \nonumber \\
& = \frac{3}{2}H_4 + B_3 H_3 + B_2 H_2 + \cdots, \label{eq:B2ex}
\end{align}
where we use \eqref{re:zntoHn} to compute the value of $B_2$ which is given by
\begin{equation}\label{def:valueB2}
B_2 = \frac{3}{2} |A_{4,2}| - 70 |A_{3,2}| + 1056= 576.
\end{equation}
Hence, 
\begin{equation}
a_2' \sim 576 a_2^2  \qquad \textup{hence,}\quad   a_2(s) \sim -\frac{1}{576 s}.
\end{equation}
In summary, we have found the asymptotic expansion in $L^2_\omega$ as follows: 
\begin{align}\label{eq:expinn}
 w(y,s) = 1 - \frac{1}{B_\ell s} \phi_{2\ell}(y) + \cdots = 1 - \frac{1}{B_\ell} \frac{(2\alpha y^2)^{\ell}}{s} + \cdots,
\end{align}
where $B_\ell$ is given in \eqref{def:valueB3} and \eqref{def:valueB2} for the case $(d = 3, \ell =3)$ and ($d = 4, \ell  = 2$) respectively. \\

\medskip

\paragraph{Matching asymptotic expansions and the profile:} The inner expansion \eqref{eq:expinn} suggests  to look for a profile of the form 
\begin{equation} \label{def:blowupvar}
w(y,s) \sim F(\xi) \quad \textup{with} \quad \xi = \frac{y}{s^\frac{1}{2\ell}} = \frac{|x|} {\sqrt{T-t} |\log (T-t)|^\frac{1}{2\ell}}. 
\end{equation}
Plugging this form to \eqref{eq:wys}, we obtain at the leading order the nonlocal ODE satisfied by $F$:
\begin{equation} \label{eq:odeF}
\frac{1}{\xi^{d-1}} \pa_\xi \Big( F \int_0^\xi F(\zeta) \zeta^{d-1} d\zeta \Big)  - \frac{1}{2}\xi F' - F = 0,
\end{equation}
subjected to the initial condition 
$$\quad F(0) = 1.$$
Since the original problem \eqref{sys:KS} has the divergence structure,
we look for a solution $F(\xi)$ through the partial mass transformation:
$$ Q(\xi) = \frac{1}{\xi^d}\int_0^\xi F(\zeta)\zeta^{d-1} d\zeta, \quad \textup{which is equivalent to} \;\; F(\xi) = \frac{1}{\xi^{d-1}} \pa_\xi (\xi^d Q).$$
By noticing that $ \xi \partial_\xi F = \frac{1}{\xi^{d-1}} \partial_\xi \big(\xi^d F\big) - dF,$ we rewrite the ODE \eqref{eq:odeF} as
$$0 = \partial_\xi\Big(\xi^d F Q - \frac{1}{2}\xi^d F + \big( \frac d2 - 1\big) \xi^d Q \Big).$$
Since we are looking for a regular profile $F$, so $Q(\xi) = \frac{1}{\xi^{d}}\int_0^\xi F(\zeta) \zeta^{d-1} d\zeta \lesssim 1$, an integration yields the ODE
\begin{align}\label{equation-Q}
dQ\big(Q - 1/d \big) + \big(Q - 1/2 \big) \xi Q' = 0, \quad Q(0) = \frac{1}{d}.
\end{align}
The two trivial solutions are $Q = 0$ and $Q = \frac{1}{d} < 1/2$ for $d \geq 3$. 

We observe from \eqref{equation-Q} that $Q(\xi)$ is a positive decreasing function on $(0,\infty)$, namely that 
\begin{align} \label{prop:Q}
    Q(\xi)>0, \; Q'(\xi)<0\mbox{   for   }\xi \in (0,\infty).
\end{align}
As for the positivity, we assume there exists a first point $\xi^*\in (0,\infty)$ such that $Q(\xi^*)=0$ and $Q(\xi)>0$ for $\xi \in (0,\xi^*)$. From \eqref{equation-Q}, we have $Q'(\xi^*)=0$. On the other hand, we note from \eqref{equation-Q} that $Q(\xi) < \frac{1}{2}$ for all $\xi \geq 0$, we then rewrite \eqref{equation-Q} as $Q'(\xi) = f(Q,\xi)$, where $f(Q,\xi) = \frac{dQ(Q - 1/d)}{\xi (1/2 - Q)}$ is a continuous differentiable function on $(\xi_* - \delta, \xi_* + \delta)$ for $\delta \ll 1$. By the Cauchy-Lipschitz theorem, the solution $Q(\xi)$ is necessarily monotonic on $(\xi_* - \delta, \xi_* + \delta)$, which implies that $Q'(\xi_*) < 0$ and a contradiction follows. As for the decreasing property, we assume there exists a first point  $\tilde \xi$ such that $Q'(\tilde\xi)=0$ and $Q'(\xi)<0$ for all $\xi\in (0,\tilde\xi)$, then equation \eqref{equation-Q} gives either $Q(\tilde\xi)=0$ or $Q(\tilde\xi)=\frac 1d$. The first case is not possible because of the strict positivity of $Q$, the second case gives $Q(0)=Q(\tilde\xi)=\frac 1d$ for all $\xi\in (0,\tilde \xi)$, consequently $Q'(\xi)=0$ for all $\xi\in (0,\tilde \xi)$, which is a contradiction. We conclude that $Q$ is positive decreasing and connects $Q(0) = 1/d$ to $Q(\infty) = 0$. By solving $\xi  = \xi(Q)$, we obtain the autonomous ODE
\begin{align*}
\frac{\partial \xi}{\partial Q}  + \frac{Q - 1/2}{dQ(Q - 1/d)} \xi = 0  \quad \mbox{with }\xi(0)=\frac 1d
\end{align*}
whose solution is implicitly given by 
\begin{equation}\label{eq:formQ}
c_\ell \xi^{2\ell} = \frac{1 - d Q}{Q^\ell}, \quad c_\ell \in \Rb_+.
\end{equation}
From the initial condition $Q(0) = 1/d$, we look for an expansion near $\xi = 0$,
\begin{equation} \label{est:TaylorQat0}
  Q(\xi) = \frac{1}{d} -  \frac{c_\ell}{d^{\ell + 1}}  \xi^{2\ell} + \frac{\ell c_\ell^2}{d^{2\ell + 1}} \xi^{4\ell} + \Oc(\xi^{6\ell}) \quad \textup{for} \quad \xi \to 0.
  \end{equation}
As for $\xi$ large, the decay condition $Q(\xi) \to 0$ as $\xi \to \infty$, then from \eqref{eq:formQ} we see that 
\begin{align}
 Q(\xi) \sim c_\ell^{-\frac 1\ell} \xi^{-2}+\Oc (\xi^{-4}) \quad \textup{for} \quad \xi \to \infty. \label{est:QxiInf}
\end{align}
From the relation $F(\xi) = dQ + \xi Q'$, we end up with  
\begin{align*}
F(\xi) =  1 -  \frac{c_\ell (d + 2\ell)}{d^{\ell+1}} \xi^{2\ell} + \frac{c_\ell^2 \ell(d + 4\ell)}{d^{2\ell + 1}} \xi^{4\ell}+ \Oc(\xi^{6\ell}) \quad \textup{as}\;\; \xi \to 0.
\end{align*}
Matching this expansion with \eqref{eq:expinn} yields the value of $c_\ell$:
\begin{equation}\label{eq:con_cell}
 \frac{c_\ell (d + 2\ell)}{d^{\ell+1}} = \frac{(2\alpha)^\ell}{B_\ell}, \quad \textup{hence}, \quad c_\ell = \frac{(2\alpha)^\ell d^{\ell + 1}}{B_\ell (d + 2\ell)}.
\end{equation}
The correct value of $c_\ell$ is used in some algebraic cancellations in improved estimates of the projection onto the null mode in Lemmas \ref{lemm:EEhatys} and \eqref{lemm:finitepart}).\\

We conclude this section by insisting on the fact that the formal derivation of the blowup profile by using the truncated spectral decomposition \eqref{decomp:qtruncated} serves as a formal explanation of why the blowup variable \eqref{def:blowupvar}  appears in dimensions 3 and 4, and would help the readers get an idea on how the blowup dynamic is involved for the system \eqref{sys:KS}. A completely rigorous framework is provided in Sections \ref{sec:linear} and \ref{sec:ControlBs} does not rely on the truncated spectral decomposition \eqref{decomp:qtruncated} at all.

\section{Linearized problem and bootstrap regime} \label{sec:linear}
In this section, the constants $\ell$ and $\alpha$ are fixed as
$$\ell = 3 \;\; \textup{for}\;\; d = 3 \quad \textup{and} \quad \ell = 2 \;\; \textup{for}\;\; d = 4, \quad \alpha = \frac{d-2}{2d} = \frac{1}{2\ell}.$$
We formulate the problem to show that there exist initial data so that the problem \eqref{eq:wys} has a global in time solution that satisfies 
\begin{equation}
\sup_{y \geq 0} \Big| w(y,s) - F\big(y s^{-\frac{1}{2\ell}} \big)  \Big| \to 0 \quad \textup{as} \quad s \to \infty,
\end{equation}
where $F(\xi) = \frac{1}{\xi^{d-1}}\pa_\xi(\xi^d Q(\xi)) = dQ(\xi) + \xi \pa_\xi Q(\xi)$ and $Q(\xi)$ is defined by \eqref{eq:formQ}. 
\subsection{The  partial mass setting}
Since we are working in the radial setting, it is convenient to work in the partial mass to simplify the analysis, namely we introduce 
\begin{align} \label{def:mw}
m_w(y,s) = \int_0^y w(\zeta, s) \zeta^{d-1} d\zeta,  \quad w(y,s) = \frac{\pa_y m_w}{y^{d-1}}, \quad \pa_y \Kc_w = -\frac{m_w}{y^{d-1}}.
\end{align}
We write from \eqref{eq:wys} the equation for $m_w$,
\begin{equation} \label{eq:mw}
\pa_s m_w = \pa_y^2m_w - \frac{d-1}{y}\pa_y m_w  - \frac{1}{2}y \pa_y m_w + \frac{d-2}{2}m_w + \frac{m_w \pa_y m_w}{y^{d-1}}.
\end{equation}
To keep the same scaling invariance of the original solution $w(y,s)$, we further introduce
\begin{align} \label{def:vfrommw}
v(y,s) = \frac{m_w(y,s)}{y^d}, \quad w = d v + y\pa_y v = \frac{1}{y^{d-1}}\pa_y (y^d v),
\end{align}
where we write from \eqref{eq:mw} the equation for  $v$: 
\begin{equation}\label{eq:vys}
\pa_s v =\Delta_{d+2} v - \frac{1}{2}y\pa_y v - v + dv^2 + y v\pa_y v,
\end{equation} 
where $\Delta_{d+2}$ stands for the Laplacian in dimension $d+2$ acting in the radial functions, i.e. 
$$\Delta_{d+2} = \pa_y^2 + \frac{d+1}{y}\pa_y.$$
We have found in the previous section through a formal spectral analysis and matching asymptotic expansions the following approximate blowup profile to \eqref{eq:vys},
$$Q = Q(\xi),  \quad \forall \xi = ys^{-\frac{1}{2\ell}} \geq 0,$$
that solves \eqref{equation-Q} and is defined by \eqref{eq:formQ}. We recall from \eqref{prop:Q} that the profile $Q$ is strictly monotone and positive, 
\begin{equation}\label{est:propQ}
Q(0) = \frac{1}{d}, \quad \lim_{\xi \to \infty}Q(\xi) = 0, \quad  Q(\xi) > 0, \quad Q'(\xi) < 0, \quad \forall \xi > 0. 
\end{equation}
The monotonicity of $Q$ makes the perturbative analysis simpler for the associated linearized problem from \eqref{eq:vys}. In contrary, a linearization from \eqref{eq:wys} around $F$ would make the analysis more complicated in terms of technicalities due to the lack of monotone property of $F$. This is one of the reasons we choose to work with the partial mass setting \eqref{def:vfrommw}. Nevertheless, there is always equivalent between the analysis with \eqref{def:vfrommw} and the one with the original variable $w(y,s)$ up to some technicalities.  

\subsection{Linearization}
A linearization of \eqref{eq:vys} around the profile $Q$, namely 
\begin{equation}
v(y,s) = Q(\xi) + \vep(y,s),
\end{equation}
leads to the linearized problem
\begin{equation} \label{eq:vepys}
\pa_s \vep = \Hs \vep + NL(\vep) + E,
\end{equation}
where $\Hs $ is the second order linear operator
\begin{equation}\label{def:Hs}
\Hs = \Delta_{d+2}  - V_1 y \pa_y   +V_2 ,
\end{equation}
with 
\begin{equation}\label{def:V12}
V_1(\xi)= \frac{1}{2} - Q(\xi), \quad V_2(\xi) =  2dQ - 1 + \xi \pa_\xi Q,
\end{equation}
and $E$ is the generated error, 
\begin{equation} \label{def:E}
E =  \Delta_{d+2} Q - \pa_s Q.
\end{equation}
and $NL$ is the nonlinear quadratic term
\begin{equation}\label{def:NLq}
NL(\vep)  = d\vep^2 + y \vep\pa_y \vep. 
\end{equation}
From the asymptotic behavior of the profile $Q(\xi)$ given in \eqref{est:TaylorQat0} and \eqref{est:QxiInf}, we observe 
$$ V_1(\xi) = \frac{1}{2\ell}  + \Oc(\xi^{2\ell}), \quad V_2(\xi) = 1 + \Oc(\xi^{2\ell}) , \quad \xi \ll 1,$$
and 
$$ V_1(\xi) = \frac{1}{2}  + \Oc(\xi^{-2}), \quad V_2(\xi) = -1 + \Oc(\xi^{-2}) , \quad \xi \gg  1,$$
Thus, the full linearized operator $\Hs$ behaves differently depending on the region:
\begin{equation} \label{def:Hxell}
\Hs \sim \Hs_{\frac{1}{2\ell}} + \textup{Id}, \quad \textup{where}\quad \Hs_{\frac{1}{2\ell}} =  \Delta_{d+2} - \frac{1}{2\ell} y\pa_y, \quad \textup{for}\;\; y \ll s^{-\frac{1}{2\ell}},
\end{equation}
and 
\begin{equation}\label{def:Hs12}
\Hs \sim \Hs_{\frac{1}{2}} - \textup{Id}, \quad \textup{where}\quad \Hs_{\frac 12} = \Delta_{d+2} - \frac{1}{2} y\pa_y, \quad \textup{for}\;\; y \gg s^{-\frac{1}{2\ell}}. 
\end{equation}
We note that in \textit{the outer region} $y \gg s^{-\frac{1}{2\ell}}$, the operator $\Ls$ behaves the same as for the one considered for the classical semilinear heat equation \eqref{eq:NLH}.   However, \textit{the inner region} $y \ll s^{-\frac{1}{2\ell}}$ and \textit{the intermediate region} $y \sim s^{-\frac{1}{2\ell}}$, the operator behaves differently in comparison with what is known in the analysis for \eqref{eq:NLH}. To our knowledge, there is no a complete spectral theory for the full linear operator $\Hs$.  We thus use a different approach to avoid this missed piece in the analysis, especially in the intermediate region. 

We recall here the spectral properties of $\Hs_{\frac{1}{2\ell}}$, which play an important role in the analysis when the inner region is concerned. The linear operator $\Hs_{\frac{1}{2\ell}}: H^2_{\rho}(\Rb_+) \to L^2_{\rho}(\Rb_+)$ is self-adjoint, where  the weight function 
$$\rho(y) = e^{- \frac{|y|^2}{4\ell}} y^{d+1}.$$
In particular, we have from \eqref{eq:phi2n} and \eqref{def:vfrommw},
\begin{equation}
\Hs_{\frac{1}{2\ell}} \varphi_{2n} = - \frac{n}{\ell} \varphi_{2n}, \quad n \in \mathbb{N},
\end{equation}
where the following relation holds
\begin{equation}
\varphi_{2n}(y) = \frac{1}{y^{d}}\int_0^y \phi_{2n}(\zeta) \zeta^{d-1}d\zeta,
\end{equation}
with $\phi_{2n}$ being defined explicitly in \eqref{def:phi2ell}. We use the even index $2n$ instead of $n$ to infer that we are in the radial setting and the eigenfunctions are  only polynomials of even degrees. Note that $\varphi_{2n}$ has the same form as $\phi_{2n}$. We also have the orthogonality 
\begin{equation}
\int_0^\infty \varphi_{2n}(y) \varphi_{2m}(y) \rho(y) dy = c_{n} \delta_{n,m},
\end{equation}
and the family of the eigenfunctions $\{ \varphi_{2n}\}_{n \in \mathbb{N}}$ forms a complete orthogonal basis in $L^2_\rho(\Rb_+)$ in the sense that for any $g \in L^2_\rho(\Rb_+)$ we can decompose it as 
\begin{equation}
g(y) = \sum_{n \in \mathbb{N}} g_n \varphi_{2n}(y), \quad g_n = \langle g, \varphi_{2n}\rangle_\rho = \int_0^\infty g \varphi_{2n} \rho dy. 
\end{equation}
\begin{remark}[A conjecture on the spectral properties of $\Hs$] For $d = 3,4$, $s > 0$, let $\ell = \frac{d}{d-2}$ and $\omega[s](y) = y^{d+1}e^{-\int yV_1(y,s)dy}$. We conjecture that the linearized operator acting on radial functions $\Hs[s]: H^2_{\omega[s]} \to L^2_{\omega[s]}$ is self-adjoint and its spectrum consists of $\ell$ positive eigenvalues, a zero eigenvalue and infinitely many negative ones. Let $\big\{\phi_k[s]\big\}_{0 \leq k \leq \ell}$ be the first $\ell+1$ radial eigenfunctions corresponding to the $\ell$ positive and zero eigenvalues, we have the spectral gap 
$$ \langle \Hs v, v\rangle_{L^2_{\omega[s]}} \leq -\delta \|v\|^2_{L^2_{\omega[s]}} + C\sum_{k=0}^\ell \langle u, \phi_k[s]\rangle_{L^2_{\omega[s]}} \quad \textup{for some}\;\; \delta, C > 0.$$
We remark that in the blowup variable $\xi = \frac{y}{s^\frac{1}{2\ell}}$, the operator $\Hs[s]$ acting on functions $f = f(\xi)$ becomes $$\Hs[s]f(\xi) = s^{-\frac{1}{\ell}} \Delta_{d + 2} f(\xi) + \As f(\xi), \quad \textup{where}\;\; \As = - V_1(\xi) \xi \partial_\xi + V_2(\xi).$$
This observation yields $\Hs[s] \sim \As$ as $s \to \infty$, from which we expect that a perturbative analysis would yield a complete spectral description for $\Hs[s]$ by studying $\As$. As for the linear transport operator $\As$, we conjecture that $\As$ acting on smooth radial functions admits the point spectrum $\Big\{\frac{\ell - k}{\ell}, k \in \mathbb{N}\Big\}$. The zero eigenvalue can be verified by a scaling argument as we note that the profile equation
\begin{equation}\label{eq:Qequ}
 -\frac{1}{2}\xi \pa_\xi Q - Q + dQ^2 + \xi Q \pa_\xi Q = 0,
 \end{equation}
is invariant under the dilation $Q_\gamma(\xi) = Q(\gamma \xi)$ for any $\gamma > 0$. By plugging $Q_\gamma$ into \eqref{eq:Qequ} and  differentiating it with respect to $\gamma$ near $\gamma = 1$, we get $\As \big(\xi \pa_\xi Q\big) = 0$. Let $\varphi_\ell(\xi) = \xi \partial_\xi Q$, then $\As \varphi_\ell(\xi) = 0$. We check that the function $\varphi_k(\xi) = \xi^{2k - 2\ell} \varphi_\ell(\xi)$ satisfies 
$$ \As \varphi_k(\xi) = 2(\ell-k)V_1(\xi) \varphi_k(\xi) \sim \frac{\ell - k}{\ell} \varphi_k(\xi),$$
where we used the asymptotic behavior $V_1(\xi)  = \frac{1}{2\ell} + \Oc(\xi^{2\ell})$ as $\xi \to 0$. 
\end{remark}

\subsection{Bootstrap regime}
Our aim is to construct a global in time solution $\vep(y,s)$ to \eqref{eq:vepys} that satisfies 
\begin{equation}
\|\vep(s)\|_{L^\infty(\Rb_+)} \to 0 \quad \textup{as} \quad s \to \infty. 
\end{equation} 
This requirement implies that the dynamics of \eqref{eq:vepys} mainly relies on the linear part $\Hs$ since the nonlinear term is roughly quadratic. The construction is based on the following observation:

\paragraph{- the outer region $y \gg s^\frac{1}{2\ell}$ ($\xi \gg 1$):} thanks to the decay of $Q(\xi)$, the linear part $\Hs \sim \Hs_{\frac 12} - \textup{Id}$ has a fully negative spectrum. Thus, we can control the solution in this region without difficulties. In particular, let $K \gg 1$ be a large fixed constant and define
\begin{equation}\label{def:vep_out}
\vep^\out(y,s) = \vep(y,s) \big(1 - \chi_{_{K}}(\xi)\big),
\end{equation}
where
\begin{equation}\label{def:chiK}
\chi_{_K}(\xi) = \chi_{_0}\Big( \frac{\xi}{ K}\Big),
\end{equation}
and $\chi_{_0}$ is the cut-off function 
$$\chi_{_0} \in \Cc^\infty\big(\Rb_+, [0,1]\big), \quad \chi_{_0}(\xi) = 1 \; \textup{if}\;\xi \in [0,1] \quad \textup{and} \quad \chi_{_0}(\xi) = 0\; \textup{if}\;\xi \geq 2.$$ 
From \eqref{eq:vepys}, we write the equation for $\vep^\out(y,s)$,
\begin{equation}\label{eq:vepout}
\pa_s \vep^\out = \big(\Hs_\frac 12 - \textup{Id} \big) \vep^\out + \big(1 - \chi_{_{K}} \big)\big[ Q y\pa_y \vep + (2dQ + \xi \pa_\xi Q) \vep + NL(\vep) + E\big] + \Ec^{bd}(\vep),
\end{equation}
and $\Ec^{bd}$ is the boundary term due to the cut-off defined by  
\begin{equation}\label{def:EM}
\Ec^{bd}(\vep) = \Big(-\pa_s \chi_{_K} + \Delta_{d+2} \chi_{_K}  - \frac 1{2\ell} y \pa_y \chi_{_K}\Big) \vep+ 2\pa_y \chi_{_K}\pa_y \vep.
\end{equation} 
We need here the information of $\vep$ at the boundary $K s^{\frac{1}{2\ell}} \leq y \leq 2K s^\frac{1}{2\ell}$ ($K \leq \xi \leq 2K$) that we retrieve from the estimate of $\vep$ in the intermediate region to close the estimate for $\vep^\out$. \\

\medskip

\paragraph{- the intermediate region $y \sim s^\frac{1}{2\ell}$ ($\xi \sim 1$):} As mentioned earlier, there is a lack of a complete description of spectral property of $\Hs$, we are not able to use the semigroup estimate as for the nonlinear heat \eqref{eq:NLH} (see for example, \cite{MZdm97}, \cite{NZens17}).  Thanks to the monotone property of $Q$ and the dissipation, we can achieve the control of $\vep$ in this region through a standard energy estimate from \eqref{eq:vepys}. To obtain a small enough estimate, we need to refine the approximate solution by introducing 
\begin{equation}\label{def:PsiApp}
\Psi(y,s) = Q(\xi) + \hat{\Psi}(y,s), \quad \hat{\Psi}(y,s) =  - \frac{1}{B_\ell s} \Big(  \varphi_{2\ell}(y) - \frac{(2\alpha y^2)^\ell}{2\ell + d} \Big)\chi_{_0}(\xi),
\end{equation}
where $B_\ell$ is the constant defined in \eqref{def:valueB2} and \eqref{def:valueB3} and $2\alpha = \frac{1}{\ell}$. Recall from \eqref{eq:expinn} and \eqref{def:vfrommw}, we have an equivalent inner expansion of $v(y,s)$ in $L^2_\rho$:
\begin{equation}\label{eq:InnerExpv}
v(y,s) = \frac{1}{d} - \frac{1}{B_\ell s} \varphi_{2\ell}(y) + \cdots,
\end{equation}
and  from \eqref{est:TaylorQat0} and \eqref{eq:con_cell}, 
\begin{equation}\label{eq:Qat0}
Q(\xi) = \frac{1}{d} - \frac{1}{B_\ell(2\ell + d)} (2\alpha \xi^2)^\ell + \frac{\ell d }{B_\ell^2(2\ell + d)^2} (2\alpha \xi^2)^{2\ell} + \Oc(\xi^{6\ell}) \quad \textup{as} \quad \xi \ll 1.
\end{equation} 
Hence, we have for $ y < s^\frac{1}{2\ell}$, 
\begin{align}
\Psi(y,s) &= \frac{1}{d} - \frac{\varphi_{2\ell}(y) }{B_\ell s} +  \frac{\ell d }{B_\ell^2(2\ell + d)^2}  \frac{(2\alpha y^2)^{2\ell}}{s^2} +   \Oc\Big( \frac{\langle y\rangle^{6\ell}}{ s^{3}} \Big), \label{eq:Psiat0}
\end{align}
which agrees with the expansion \eqref{eq:InnerExpv}.

We then linearize
\begin{align}
v(y,s) =  \Psi(y,s) + \hat \vep(y,s),\label{def:vephat}
\end{align}
and write from \eqref{eq:vepys} and the relation $\vep = \hat{\Psi} + \hat \vep$, 
\begin{equation}\label{eq:vephat}
\pa_s \hat \vep = \Hs \hat \vep + \hat\Vc \hat\vep  + NL(\hat \vep) + \hat E,
\end{equation}
where $\Hs$, $NL$ are defined in \eqref{def:Hs}, \eqref{def:NLq}, and $\hat \Vc$ is small linear operator
\begin{equation}\label{def:hatVc}
\hat \Vc \hat \vep = -\hat \Psi y \pa_y \hat \vep  + (2d \hat \Psi + y\pa_y \hat \Psi) \hat \vep, 
\end{equation}
and $\hat E$ is the generated error
\begin{equation}\label{def:Ehat}
\hat E(y,s) = -\pa_s (Q(\xi) + \hat \Psi) + \Delta_{d+2}Q(\xi) + \Hs \hat \Psi + NL(\hat \Psi).
\end{equation}
We introduce the following norm: for a fixed large constant $K\gg 1$,
\begin{equation}\label{def:normvepmid}
 \|\hat \vep(s)\|_{\flat}^2 = \int_{0}^\infty (1 - \chi_{_K}(y)) \Big(\frac{|\hat \vep|^2}{|y|^{4\ell + 2}}\Big)\frac{dy}{y}.
\end{equation}
In fact, we can replace the power $4\ell + 2$ by any integer number $2k$ with $k \geq 2\ell + 1$, so that after some integration by parts, we get an estimate of the form 
$$\frac{d}{ds} \|\hat \vep(s)\|_{\flat}^2 \leq -\delta(k) \|\hat \vep(s)\|_{\flat}^2 +   \|\hat E(s)\|_{\flat}^2 + \textup{"boundary terms $y \sim K$"}, $$
where $\delta(k)$ is strictly positive for $k \geq 2\ell + 1$. Due to the cut-off $(1 - \chi_{_K}(y))$, we need information of $\hat \vep$ for $K \leq y \leq 2K$ to estimate the boundary term and to complete the estimate of $\|\hat \vep(s)\|_{\flat}$. This information is retrieved from the control in the inner region. \\

\medskip

\paragraph{- the inner region $y \ll s^\frac{1}{2\ell}$ ($\xi \ll 1$):} The linearized operator $\Hs$ behaves like $\Hs_{\frac{1}{2\ell}} + \textup{Id}$ that has $\ell - 1$ positive eigenvalues, a zero mode and infinite many negative ones.  We need further refinement to achieve the control of $\hat \vep$ in this region. More precisely, we decompose
\begin{align}
\hat \vep(y,s) = \Psi(y,s) + \hat{\vep}_\natural(y,s) + \tilde \vep(y,s), \label{decomp:vepys}
\end{align}
where 
\begin{equation}\label{def:vepnatural}
\hat{\vep}_\natural(y,s) = \sum_{k = 0}^{2\ell - 1} \hat \vep_k(s) \varphi_{2k}(y), \quad \hat \vep_k(s) = \langle \hat \vep, \varphi_{2k}\rangle_\rho. \
\end{equation}
The introduction of $\hat \vep_\natural$ is to obtain a spectral gap estimate once we perform $L^2_\rho$ estimate for $\tilde \vep$. In fact, we have the orthogonality condition
\begin{equation}\label{eq:orhtogonalvep}
\langle \tilde \vep, \varphi_{2k}\rangle_\rho = 0 \quad \textup{for} \quad k = 0, \cdots, \ell,
\end{equation}
 where $\varphi_{2k}$ is the eigenfunction associated to the linear operator $\Hs_{\frac{1}{2\ell}}$, from which we have the estimate
 \begin{equation}\label{est:spectralgap}
 \langle \Hs_{\frac{1}{2\ell}} \tilde{ \vep}, \tilde{\vep} \rangle_\rho + \langle \tilde{\vep}, \tilde{\vep}\rangle_\rho \leq  - \langle \tilde{\vep}, \tilde{\vep}\rangle_\rho. 
 \end{equation}
From \eqref{eq:vepys}, we write the equation for $\tilde{\vep}$, 
\begin{equation}\label{eq:veptil}
\pa_s \tilde \vep = \big(\Hs_\frac{1}{2\ell} + \textup{Id}\big) \tilde \vep + \tilde \Vc \tilde \vep + NL(\tilde{\vep}) + \tilde E(y,s),
\end{equation}
where $\Hs$ is the linearized operator around $Q$ introduced in \eqref{def:Hs}, $\Vc$ is a small first order linear term, 
\begin{equation}\label{def:Vcqtil}
\tilde \Vc   = - \tilde{V}_1 y\pa_y + \tilde V_2 ,
\end{equation}
with 
\begin{equation} \label{def:V12til}
\tilde{V}_1 = \frac{1}{d} - \Psi - \hat{\vep}_\natural, \quad \tilde V_2 = 2d \Psi  -2 + y \pa_y \Psi + 2d \hat{\vep}_\natural + y\pa_y \hat{\vep}_\natural,
\end{equation}
the nonlinear term $NL(\tilde{\vep})$ is defined by \eqref{def:NLq} and $E$ is the total error term, 
\begin{equation} \label{def:Etil}
\tilde E(y,s) =  -\pa_s (\Psi + \hat{\vep}_\natural) + \Delta_{d+2}Q(\xi) +  \Hs(\hat \Psi+ \hat{\vep}_\natural) + NL( \hat \Psi + \hat{\vep}_\natural).
\end{equation}

\noindent We now define the bootstrap regime to fully control the solution to \eqref{eq:vepys}. 
\begin{definition}[Bootstrap regime]\label{Definition-shrinking -set} Let $s > 1$ and $A > 1$, we define  $\Sc_A(s)$ the set of all functions $\vep(s) \in W^{1,\infty}(\Rb_+)$ such that 
\begin{equation}\label{est:vepk}
  |\hat \vep_k(s)| \leq A s^{-2} \quad \textup{for} \quad 0 \leq k \ne \ell \leq 2\ell- 1, 
\end{equation}
\begin{equation}\label{est:vepell}
 |\hat \vep_\ell(s)| \leq A^2 s^{-2}\log s,
\end{equation}
\begin{equation} \label{est:vepL2rho}
\| \tilde{\vep}(s) \|_{L^2_\rho(\Rb_+)} \leq A s^{-3},
\end{equation}
\begin{equation}\label{est:vepmid}
j =0,1,2, \quad \|(y\pa_y)^j \hat \vep(s)\|_{\flat} \leq A^{1+j} s^{-1 - \frac{3}{2\ell}},
\end{equation}
\begin{equation}\label{est:vepout}
j =0,1,\quad \|(y \pa_y)^j\vep^\out(s)\|_{L^\infty(\Rb_+)} \leq A^{4+j} s^{-\frac{1}{\ell}}, \quad  \|y\vep^\out(s)\|_{L^\infty(\Rb_+)} \leq A^{4} s^{-\frac{1}{2\ell}},
\end{equation}
where $\hat \vep$, $\tilde \vep$, $\vep^\out$ and $\|\cdot\|_{\flat}$  are introduced in \eqref{decomp:vepys}, \eqref{def:vep_out} and \eqref{def:normvepmid}.
\end{definition}
\begin{remark}[Order of estimates] The bootstrap estimates defined in the shrinking set $\Sc_A$ shows the priorities in order to achieve the control of $\vep$ in the whole $\Rb_+$ as follows: we first obtain $L^2_\rho$-estimate for $\tilde{\vep}$ which directly gives $L^2_{loc}$-estimate, then a standard parabolic regularity yields $L^\infty(y \lesssim K)$ bound for $\hat \vep$ for any $K > 0$; this $L^\infty_{loc}$-estimate is then used in the energy estimate of $\|\hat \vep(s)\|_{\flat}$ to bound boundary terms (having the support $y \in [K, 2K]$) due to the cut-off $\chi_{_K}(y)$ (see \eqref{def:normvepmid}). A parabolic regularity type argument yields a similar estimate for $\|y\pa_y\hat \vep(s)\|_{\flat}$, from which and Sobolev we get $L^\infty$-estimate for $y \lesssim K s^{\frac{1}{s^{2\ell}}}$. This $L^\infty$-bound for $y \sim K s^{\frac{1}{s^{2\ell}}}$ enters the estimate of $\|\vep^\out\|_{L^{\infty}}$ due to the cut-off $\chi_{_K}(\xi)$. We thus can obtain an $L^\infty$ bound for $\vep$ in the whole $\Rb_+$. Since the nonlinearity is quadratic, we just need a rough bound of $\|\vep\|_{L^\infty(\Rb)}$ to handle this nonlinear term in all estimates. 
\end{remark}

\subsection{Existence of solutions in the bootstrap regime}
The strategy is to show that if we start with initial data $\vep(s_0) \in \Sc_A(s_0)$, then the corresponding solution $\vep(s)$ to the equation \eqref{eq:vepys} stays in $\Sc_A(s)$ for all $s \geq s_0$. In particular, we consider the  initial data of the form
\begin{equation}\label{def:intitialdata_q}
\vep(y,s_0) = \hat{\Psi}(y,s_0) + \hat \vep(y,s_0), \quad \hat \vep(y,s_0) \equiv \hat \psi[\mathbf{d},A, s_0](y)= \frac{A}{s_0^2}\left(\sum_{i=0}^{\ell-1}d_i \varphi_{2i}\right) \chi_{_0}(\xi) 
\end{equation}
where $\hat{\Psi}$ is defined in \eqref{def:PsiApp}, $\varphi_{2i}$'s are the eigenfunctions of $\Hs_\frac{1}{2\ell}$, $\chi_{_0}$ is the cut-off function introduced right after \eqref{def:chiK}, 
$$ s_0 \gg 1, \;\; A \gg 1, \; \; \mathbf{d} = (d_0, \cdots, d_{\ell - 1}) \in B_1(\Rb^{\ell}),$$
are real parameters to be determined for which the corresponding solution $\vep(s)$ is trapped in $\Sc_A(s)$ for all $s \geq s_0$. Precisely, we aim at proving the following  proposition which is the central of our construction leading to the conclusion of Theorem \ref{theo:1}. 

\begin{proposition}[Existence of solutions in $\Sc_A$] \label{prop:2} There are $s_0 \gg 1 , A \gg 1$ and $\mathbf{d} \in B_1(\Rb^\ell)$ such that the solution $\vep(s)$ to \eqref{eq:vepys} with the initial data $\vep(s_0) = \psi[\mathbf{d}, A, s_0]$ defined in \eqref{def:intitialdata_q} is trapped in $\Sc_A(s)$ for all $s \geq s_0$. 
\end{proposition}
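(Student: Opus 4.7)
The plan is to prove Proposition \ref{prop:2} by a bootstrap and topological shooting argument in the spirit of Bricmont--Kupiainen and Merle--Zaag. The aim is to show that among the estimates defining $\Sc_A$, the only ones that can saturate along a trajectory are \eqref{est:vepk} for the $\ell$ unstable indices $0 \leq k \leq \ell - 1$, and then to use the $\ell$ free parameters $\mathbf{d} \in B_1(\Rb^\ell)$ in the initial datum \eqref{def:intitialdata_q} to tune them via a Brouwer-type argument.

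\textbf{Step 1: improving the stable bounds.} Assume $\vep(\tau) \in \Sc_A(\tau)$ for $\tau \in [s_0, s]$. I would show that, for $A$ and $s_0$ sufficiently large, each of the bounds \eqref{est:vepell}--\eqref{est:vepout} and \eqref{est:vepk} for $k \geq \ell$ can be strictly improved by a factor of at least $\tfrac12$. The $L^2_\rho$-estimate on $\tilde \vep$ follows by testing \eqref{eq:veptil} against $\tilde \vep \rho$, invoking the spectral gap \eqref{est:spectralgap}, absorbing $\tilde \Vc$ via the smallness of $\Psi - 1/d$ and $\hat \vep_\natural$, and bounding $\|\tilde E\|_{L^2_\rho} \lesssim s^{-3}$; Gronwall then yields the improvement. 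The $\flat$-estimates on $(y\pa_y)^j \hat\vep$ come from a weighted $L^2$ energy identity on \eqref{eq:vephat}: the monotonicity $Q'(\xi) < 0$ together with the cutoff $1 - \chi_{_K}$ gives coercivity of the drift against the weight $y^{-4\ell-2}$, producing the dissipation inequality announced in the introduction, while the boundary terms at $y \sim K$ are controlled by the $L^2_\rho$-bound via interior parabolic regularity. For $\vep^\out$ I would iterate Duhamel on \eqref{eq:vepout} with the semigroup of $\Hs_\frac12 - \textup{Id}$, which has fully negative spectrum, feeding in the $\flat$-bound at $\xi \sim K$ as boundary data. Finally, for the null mode $\hat \vep_\ell$, projecting \eqref{eq:veptil} onto $\varphi_{2\ell}$ and exploiting the refinement $\hat \Psi$ in \eqref{def:PsiApp} together with the exact values \eqref{eq:B3B2Intro} and \eqref{eq:con_cell} yields an algebraic cancellation giving $\hat \vep_\ell'(s) = -\tfrac{2}{s}\hat \vep_\ell(s) + \Oc(s^{-3})$; integration produces $|\hat\vep_\ell(s)| \lesssim s^{-2}\log s$, strictly below the threshold $A^2 s^{-2}\log s$.

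\textbf{Step 2: the finite-dimensional topological argument.} Projection of \eqref{eq:veptil} onto $\varphi_{2k}$ for $0 \leq k \leq \ell - 1$ together with Step 1 delivers the scalar ODEs
\begin{equation*}
\hat \vep_k'(s) = \Big(1 - \frac{k}{\ell}\Big) \hat \vep_k(s) + \Oc(s^{-3}), \qquad 0 \leq k \leq \ell - 1.
\end{equation*}
Setting $V_k(s) = A^{-1} s^2 \hat \vep_k(s)$, one checks that when $\sum_k V_k(s)^2 = 1$ one has $\tfrac{d}{ds}\sum_k V_k^2 \geq \tfrac{2}{\ell} \sum_k V_k^2 - C A^{-1} > 0$ for $A$ large, i.e.\ a transversal outgoing condition on the unit sphere. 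Define the exit time
\begin{equation*}
s_*(\mathbf{d}) = \sup\big\{ s \geq s_0 : \vep(\tau ; \mathbf{d}) \in \Sc_A(\tau)\ \forall\, \tau \in [s_0, s]\big\},
\end{equation*}
where $\vep(\cdot ; \mathbf{d})$ solves \eqref{eq:vepys} with datum \eqref{def:intitialdata_q}. If $s_*(\mathbf{d}) < \infty$ for every $\mathbf{d} \in \overline{B_1(\Rb^\ell)}$, then by Step 1 only a component with $k \in \{0, \ldots, \ell - 1\}$ can saturate at $s_*$, so $\Phi(\mathbf{d}) := (V_0(s_*(\mathbf{d})), \ldots, V_{\ell - 1}(s_*(\mathbf{d}))) \in \partial B_1(\Rb^\ell)$. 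Transversal exit makes $\Phi$ continuous, and for $\mathbf{d} \in \partial B_1(\Rb^\ell)$ the trajectory exits immediately, so $\Phi(\mathbf{d}) = \mathbf{d}$. This produces a continuous retraction of $\overline{B_1(\Rb^\ell)}$ onto $S^{\ell - 1}$, contradicting non-contractibility. Hence some $\mathbf{d}^\star$ satisfies $s_*(\mathbf{d}^\star) = \infty$, as desired.

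\textbf{Main obstacle.} The decisive difficulty lies in the null-mode improvement: a naive projection onto $\varphi_{2\ell}$ produces only $\hat \vep_\ell'(s) = \Oc(s^{-2})$, which integrates to $\Oc(s^{-1})$ and destroys the bootstrap. Recovering the $-2/s$ damping and the $\Oc(s^{-3})$ remainder requires the exact value \eqref{eq:B3B2Intro} of $B_\ell$, the precise matching constant $c_\ell$ in \eqref{eq:con_cell}, and the second-order refinement $\hat \Psi$ in \eqref{def:PsiApp}; any slip in these algebraic cancellations kills the argument. A secondary obstacle, peculiar to \eqref{sys:KS} compared with \eqref{eq:NLH}, is the intermediate zone: absent a usable spectral theory for $\Hs$, one must rely on purely monotonicity-based energy estimates in the $\flat$-norm and chain the information $L^2_\rho \rightsquigarrow \flat \rightsquigarrow L^\infty$ in precisely the order dictated by the remark following Definition \ref{Definition-shrinking -set}.
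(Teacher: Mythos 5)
Your proposal follows exactly the paper's architecture: you verify that all stable bootstrap estimates (the $L^2_\rho$, $\flat$, outer, and null-mode bounds) improve strictly via the spectral gap, $Q$-monotonicity, Duhamel, and the algebraic cancellation on $\hat\vep_\ell$, isolate the $\ell$ unstable modes $\hat\vep_0,\ldots,\hat\vep_{\ell-1}$ as the only ones that can saturate, and close with a transversal-exit Brouwer/no-retraction argument on $\mathbf d\in\overline{B_1(\Rb^\ell)}$, which is precisely the structure of the paper's Proposition \ref{prop:3} plus the concluding topological step. One immaterial slip: the remainder in your unstable-mode ODE should be $\Oc(s^{-2})$ rather than $\Oc(s^{-3})$ (the $\Oc(s^{-3})$ gain from the $B_\ell$, $c_\ell$, $\hat\Psi$ cancellation holds only for $k=\ell$, cf.\ \eqref{est:Ehatkandell} and \eqref{eq:ODEvepk}), but $A^{-1}s^2\cdot\Oc(s^{-2})=\Oc(A^{-1})$ is still beaten by the coercive term for $A$ large, so your transversality computation survives.
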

\begin{proof} By the definition of $\hat \psi[\mathbf{d}, A, s_0]$ and the projection of $\hat\psi[\mathbf{d}, A, s_0]$ onto $\varphi_{2k}$, we obtain by a direct computation and the exponential decay of the weight function $\rho$,
\begin{equation} \label{est:Psis0ell}
\hat \psi_k = \frac{A d_k}{s_0^2} +\Oc(s_0^{-2}e^{-\kappa s_0^{1/\ell}}) \quad \textup{for} \;\; k = 1, \cdots, \ell - 1, 
\end{equation}
and 
$$  |\hat \psi_k | = \Oc( A s_0^{-2}e^{-\kappa s_0^{1/\ell}}) \quad \textup{for}\;\; k \geq \ell,$$
for some $\kappa > 0$, and  
\begin{align*}
|\tilde \psi(y)| &= \big| \hat \psi(y) - \sum_{k = 0}^{2\ell-1} \hat \psi_k \varphi_{2k}(y)\big| = \Big|\sum_{k = 0}^{\ell - 1} \hat \psi_k \varphi_{2k}(y) \big(1 - \chi_{_0}(\xi)\big) - \sum_{k = \ell}^{2\ell - 1}\hat \psi_k \varphi_{2k}(y) \Big|\\
& \lesssim \frac{A}{s_0^2} \langle y \rangle^{2\ell -2} \mathbf{1}_{\{\xi \geq 1\}} +  A s_0^{-2}e^{-\kappa s_0^{1/\ell}} \yj^{4\ell - 2}, \quad \forall y > 0.  
\end{align*}
This yields the bounds 
$$ \|\tilde \psi\|_{L^2_\rho} \lesssim A s_0^{-2}e^{-\kappa s_0^{1/\ell}}, \quad \sum_{j = 0}^2 \| (y\pa_y)^j \hat \psi\|_{\flat} \lesssim A s_0^{-2}e^{-\kappa s_0^{1/\ell}}. $$
By the definition \eqref{def:vep_out}, we have by $\chi_{_0}(\xi) \big(1 - \chi_{_K}(\xi) \big) = 0$ for $K \geq 2$, thus, $\vep^\out(s_0) \equiv 0$. We then conclude that for $A \gg 1$ and $s_0 \gg 1$, the initial data $\vep(y,s_0) \in \Sc_A(s_0)$ with strictly inequalities, except for the first $\ell$ components $\hat \psi_k$ with $k = 0, \cdots, \ell - 1$. \\

\noindent From the local Cauchy problem of \eqref{sys:KS} in the radial setting  in $L^\infty(\Rb^d)$, for each initial data $ \vep( s_0) = \tilde{\Psi}(s_0) + \hat \psi_{\mathbf{d}, A, s_0} \in \Sc_A(s_0)$, there is a unique solution $\vep(s) \in \Sc_A(s)$ for $s \in [s_0, s_*)$. If $s_* = +\infty$, we are done. If $s_* < \infty$, we claim that $\vep(s_*)$ touches the boundary $\partial \Sc_A(s_*)$ only for the first $\ell$ components $\hat \vep_k(s_*)$ with $k = 0, \cdots, \ell-1$. Here, $\partial \Sc_A(s_*)$ means that the bootstrap estimates in Definition \ref{Definition-shrinking -set} hold with the equal signs. In particular, we claim the following:
\begin{proposition}[Reduction to finite dimensional problem] \label{prop:3} For $A \gg 1$, $s_0 = s_0(A) \gg 1$, there exists $\mathbf{d} = (d_0, \cdots, d_{\ell-1}) \in B_1(\Rb^{\ell})$ such that if the solution $\vep(s)$ to \eqref{eq:vepys} with the initial data $ \vep(y,s_0) = \tilde{\Psi}(y,s_0) + \hat\psi[\mathbf{d}, A, s_0](y)$ defined as in \eqref{def:intitialdata_q} satisfies $\vep(s) \in \Sc_A(s)$ for $s \in [s_0, s_*]$ and $\vep(s_*) \in \partial \Sc_A(s_*)$, then it holds\\
\begin{equation} \label{est:qks*}
\big(\hat \vep_0, \cdots, \hat \vep_{\ell-1}\big)(s_*)  \in \partial \Big(\Big[ -\frac{A}{s_*^2}, \frac{A}{s_*^2}\Big]\Big)^{\ell}.
\end{equation}
Moreover, we have 
\begin{equation}\label{transversecross}
\frac{d}{ds} \hat \vep_k^2(s_*) > 0  \quad \textup{for}\;\; k = 0, \cdots, \ell-1.
\end{equation}
\end{proposition}
\noindent The proof of Proposition \ref{prop:3} is technically long and is left to Section \ref{sec:proofofmainthm} after having derived necessary dynamics of the finite dimensional part (Lemma \ref{lemm:finitepart}), the $L^2_\rho$-energy estimate (Lemma \ref{lemm:L2rho}), the energy estimate in the intermediate and outer regions (Lemmas \ref{lemm:mid} and \ref{lemm:outer}). Let us assume Proposition \ref{prop:3} and complete the proof of Proposition \ref{prop:2}. The idea is to use a Brouwer fixed point argument to obtain a contradiction if $s_* < +\infty$. Let $\mathcal{B}_1$ be the unit ball in $\mathbb{R}^{\ell}$ and define the map 
\begin{align*}
\Theta: D(\Theta) \subset \mathcal{B}_1 & \to \partial \mathcal{B}_1,\\
\frac{s_0^2}{A}\big(\hat \vep_0, \cdots, \hat \vep_{\ell-1} \big)(s_0) & \mapsto  \frac{s_*^2}{A}\big(\hat \vep_0, \cdots, \hat \vep_{\ell-1} \big)(s_*),
\end{align*}
where 
$$D(\Theta) = \Big\{\frac{s_0^2}{A}\big(\hat \vep_0, \cdots, \hat \vep_{\ell-1} \big)(s_0) \in \Rb^\ell: \; \frac{s_0^2}{A}\big(\hat \vep_0, \cdots, \hat \vep_{\ell-1} \big)(s_0) \in \mathcal{B}_1, \quad and \;\; s_* < +\infty \Big\}.$$
From \eqref{est:Psis0ell}, we can take $(d_0, \cdots, d_{\ell -1}) \in \Rb^\ell$ such that $\frac{s_0^2}{A}\big(\hat \vep_0, \cdots, \hat \vep_{\ell-1} \big)(s_0) \in \partial \mathcal{B}_1$. Then we have by the transverse crossing \eqref{transversecross}, it holds $\sum_{k=0}^{\ell-1} \frac{s^2}A |\hat \vep_k (s)|^2 > 1$ for any $s > s_0$. From \eqref{est:qks*}, the only possibility is $s_* = s_0$, hence, $\Theta$ is the identity map on $\partial\mathcal{B}_1$. Since the unstable modes $(\hat \vep_0, \cdots, \hat \vep_{\ell - 1})(s)$ depend continuously with respect to the initial data involving the parameters $(d_0, \cdots, d_{\ell - 1})$, and the vector field $(\hat \vep_0, \cdots, \hat \vep_{\ell - 1})(s)$ satisfies the strictly outgoing property \eqref{transversecross}, hence, $D(\Theta)$ is open and $\Theta$ is continuous on $D(\Theta)$. Since $\Theta$ is continuous in $\mathcal{B}_1$ and the identity map on the boundary, then it implies that $\partial \mathcal{B}_1$ is a retract on $\mathcal{B}_1$, which is a contradiction to Brouwer's fixed point theorem \cite{Bma12}. Therefore, $s_* = \infty$, which concludes the proof of Proposition \ref{prop:2}, assuming Proposition \ref{prop:3} whose proof is given in Section \ref{sec:proofofmainthm}.
\end{proof}

\section{Control the solution in the bootstrap regime} \label{sec:ControlBs}

\subsection{Properties of the shrinking set}
\noindent We claim the following. 
\begin{lemma}[Properties of the shrinking set] Let $A \gg 1$ and $s \geq s_0 \gg 1$ and $\vep(s) \in \Sc_A(s)$ be a solution to \eqref{eq:vepys}. We have \\
i) (Local $L^\infty$-estimate) For all $M > 0$ and $j = 0,1$, 
\begin{equation}\label{est:vepLinfloc}
\|\pa_y^j \tilde \vep(s)\|_{L^\infty(y \leq M)} \lesssim C(M) As^{-3}, \quad  \|\pa_y^j \hat \vep(s)\|_{L^\infty(y \leq M)} \lesssim C(M) A^2 s^{-2}|\log s|.  
\end{equation}
ii) (Pointwise estimate)
\begin{equation}\label{est:vephatpointswise}
\forall y > 0, \quad |\hat \vep(y,s)|  + |y\pa_y \hat \vep(y,s)| \lesssim A^3 s^{-1 - \frac{3}{2\ell}} \yj^{2\ell + 1}. 
\end{equation}
iii) (Global $L^\infty$-estimate)
\begin{equation} \label{est:vepLinfbound}
\| \hat \vep(s)\|_{L^\infty(\Rb_+)} + \| (y\pa_y) \hat \vep(s)\|_{L^\infty(\Rb_+)} \lesssim A^5s^{-\frac{1}{\ell}}. 
\end{equation}
\end{lemma}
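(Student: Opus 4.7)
The plan is to chain together parabolic regularity on compact sets, a weighted one-dimensional Sobolev embedding in the intermediate region, and the spatial decomposition in the outer region, using only the bootstrap bounds in Definition \ref{Definition-shrinking -set}.

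For (i), on a compact set $\{y\leq M\}$ the weight $\rho$ is pinched between two positive constants, hence $\|\tilde\vep\|_{L^2(y\leq M)}\leq C(M)\|\tilde\vep\|_{L^2_\rho}\leq C(M)As^{-3}$. I would upgrade this to an $L^\infty_{loc}$ bound by applying parabolic smoothing to equation \eqref{eq:veptil} on the time slab $[s-1,s]$: when $\vep\in\Sc_A$ on this slab, the coefficients $\tilde V_1,\tilde V_2$ of \eqref{def:V12til} and the source $\tilde E$ of \eqref{def:Etil} are uniformly bounded on $\{y\leq 2M\}$ (using the crude global $L^\infty$ bound from \eqref{est:vepout}), so a standard heat-kernel or De Giorgi-Nash argument delivers $\|\tilde\vep\|_{L^\infty(y\leq M)}+\|\pa_y\tilde\vep\|_{L^\infty(y\leq M)}\lesssim C(M)As^{-3}$. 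The decomposition $\hat\vep=\hat\vep_\natural+\tilde\vep$ then gives the $\hat\vep$-bound: the polynomial part $\hat\vep_\natural=\sum_{k=0}^{2\ell-1}\hat\vep_k\varphi_{2k}$ is controlled on compacts by \eqref{est:vepk}--\eqref{est:vepell}, and the $\hat\vep_\ell$ contribution $A^2s^{-2}\log s$ dominates the $\tilde\vep$ remainder $As^{-3}$.

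For (ii), on $\{y\geq K\}$ I invoke a logarithmic-measure Sobolev embedding: changing variables $t=\log y$ identifies $\bigl(\int |g|^2 y^{-1}\,dy\bigr)^{1/2}$ with the standard $L^2(dt)$ norm, so that for any $g$ one has
\begin{equation*}
\sup_{y\geq K}|g(y)|^2\lesssim \|g\|_{L^2(dy/y,\,y\geq K)}^2+\|(y\pa_y)g\|_{L^2(dy/y,\,y\geq K)}^2.
\end{equation*}
Applied to $g=y^{-2\ell-1}\hat\vep$ together with the commutator identity $(y\pa_y)(y^{-2\ell-1}\hat\vep)=y^{-2\ell-1}(y\pa_y\hat\vep)-(2\ell+1)y^{-2\ell-1}\hat\vep$, the bootstrap bounds \eqref{est:vepmid} for $j=0,1$ yield $\sup_{y\geq K}|\hat\vep|/y^{2\ell+1}\lesssim A^2 s^{-1-\frac{3}{2\ell}}$. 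The same argument applied to $y^{-2\ell-1}(y\pa_y)\hat\vep$, now using \eqref{est:vepmid} for $j=1,2$, produces $A^3 s^{-1-\frac{3}{2\ell}}$. On $\{y\leq K\}$, part (i) provides $|\hat\vep|+|\pa_y\hat\vep|\lesssim C(K)A^2 s^{-2}\log s$, which is absorbed into $A^3 s^{-1-\frac{3}{2\ell}}\yj^{2\ell+1}$ for $s$ large since $\tfrac{3}{2\ell}<1$, giving \eqref{est:vephatpointswise}.

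For (iii), I split $\Rb_+$ at $y=2Ks^{1/(2\ell)}$. In the inner/intermediate zone, the pointwise bound (ii) yields $|\hat\vep|+|y\pa_y\hat\vep|\lesssim A^3 s^{-1-\frac{3}{2\ell}}(2Ks^{1/(2\ell)})^{2\ell+1}\lesssim A^3 K^{2\ell+1}s^{-1/\ell}$. In the outer zone $\{\xi\geq 2K\}$ the cut-off $\chi_{_K}(\xi)$ vanishes and $\hat\Psi\equiv 0$ (since $\mathrm{supp}\,\hat\Psi\subset\{\xi\leq 2\}$), so $\hat\vep=\vep=\vep^\out$ and \eqref{est:vepout} gives $\|\hat\vep\|_{L^\infty}\lesssim A^4 s^{-1/\ell}$ and $\|(y\pa_y)\hat\vep\|_{L^\infty}\lesssim A^5 s^{-1/\ell}$. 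Taking maxima and absorbing the fixed constant $K^{2\ell+1}$ yields \eqref{est:vepLinfbound}. The main obstacle I expect is the $L^2\to L^\infty$ upgrade in step (i): one must verify that the coefficients and source of \eqref{eq:veptil} are indeed uniformly bounded on $\{y\leq 2M\}\times[s-1,s]$, which in turn forces the three assertions to be proved in the order (i)$\to$(ii)$\to$(iii) against the full bootstrap set $\Sc_A$; the remaining steps are clean consequences of the bootstrap bounds and elementary weighted Sobolev inequalities.
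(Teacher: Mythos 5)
Your proposal matches the paper's proof in all three parts: part (i) uses the same pinching of $\rho$ on compacts plus parabolic regularity, part (ii) is the same weighted one-dimensional Sobolev embedding (the paper phrases it with dyadic rescalings $v(z)=u(Mz)$ on $[M,2M]$, which is equivalent to your $t=\log y$ change of variables) applied to $u=y^{-2\ell-1}\hat\vep$ and $u=y^{-2\ell-1}(y\pa_y\hat\vep)$, and part (iii) is the same match between the pointwise bound of (ii) on $\xi\lesssim K$ and the bootstrap bound on $\vep^\out$ for $\xi\gtrsim K$. Your added remarks on the coefficient and source bounds in step (i) are correct and make the "standard parabolic regularity" invocation more transparent, but the route is the same as the paper's.
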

\begin{proof} (i) From the $L^2_\rho$ bound \eqref{est:vepL2rho} of $\tilde{\vep}$, we get 
$$\| \tilde{\vep}(s)\|_{L^2(y \leq 2M)} \lesssim e^{\frac{M^2}{2\ell}}\| \tilde \vep(s)\|_{L^2_\rho} \lesssim C(M) A s^{-3}. $$
A standard parabolic regularity then yields the estimate 
$$\| \tilde{\vep}(s)\|_{L^{\infty}(y \leq M)} + \| \pa_y\tilde{\vep}(s)\|_{L^{\infty}(y \leq M)} \lesssim C(M)A s^{-3}.$$
We recall from the decomposition \eqref{decomp:vepys},
\begin{equation}\label{def:vephat}
\hat \vep(y,s) = \sum_{k = 0}^{2\ell - 1} \hat \vep_k(s) \varphi_{2k}(y) + \tilde{\vep}(y,s).
\end{equation}
Using the bootstrap bounds \eqref{est:vepk} and the $L^\infty$ local bound of $\tilde \vep$ yields the desired estimates. \\
(ii) We first claim that if $\int_{y \geq 1} (u^2 + |y\pa_y u|^2) y^{-1}dy < +\infty$, then 
\begin{equation}\label{est:sobolev}
\| u \|_{L^\infty(y \geq 1)}  \lesssim \int_{y \geq 1} (u^2 + |y\pa_y u|^2) y^{-1}dy. 
\end{equation}
By making a change of variable $v(z) = u(M z)$,  we write for $M \in \{2^j\}_{j = 0}^\infty$,
\begin{align*}
\| u\|^2_{L^\infty([M, 2M])} &= \| v\|^2_{L^\infty([1,2])} \lesssim \int_1^2 v^2(z) dz + \int_1^2 |\pa_z v(z)|^2 dz\\
& \lesssim \int_M^{2M} v^2(y) \frac{dy}{M} + \int_{M}^{2M} M|\pa_y v(y)|^2 dy \\
& \lesssim \int_M^{2M} v^2(y) y^{-1} dy + \int_{M}^{2M} |y \pa_y v(y)|^2 y^{-1} dy. 
\end{align*}
Taking the supremum in $j$ on the left-hand side of the above estimate and summing all the contributions from the right-hand side yields the desired inequality \eqref{est:sobolev}.
We then apply \eqref{est:sobolev} with $u = \frac{\hat \vep}{y^{2\ell + 1}}$ and $u = \frac{y\pa_y\hat \vep}{ y^{2\ell + 1}} $ to obtain from \eqref{est:vepmid}
$$ \forall y \geq 1, \quad | \hat \vep(y,s)| + |y\pa_y \hat \vep(y,s)| \lesssim A^{3} s^{-1 - \frac{3}{2\ell}} \yj^{2\ell + 1},$$
from which and \eqref{est:vepLinfloc}, we obtain \eqref{est:vephatpointswise}. \\
(iii) The estimate \eqref{est:vepLinfbound} follows from \eqref{est:vephatpointswise} and the bootstrap bound \eqref{est:vepout}. 
\end{proof}

\subsection{Decomposition of the error}
\noindent We claim the following. 
\begin{lemma}[Estimate of the generated error]  \label{lemm:EEhatys} We have\\
(i) \textup{($L^\infty$-bound of $E$ and $\hat E$)}
\begin{equation}\label{est:ELinf} 
\|E(s)\|_{L^\infty(\Rb_+)} + \|y\pa_y E(s)\|_{L^\infty(\Rb_+)}+ \|\hat E(s)\|_{L^\infty(\Rb_+)} + \|y\pa_y \hat E(s)\|_{L^\infty(\Rb_+)} \lesssim s^{-\frac 1{\ell}}.
\end{equation}
(ii) \textup{(Decomposition of $\hat E$)} 
\begin{equation}\label{dec:Ehat}
\hat E(y,s) = \sum_{k = 0}^{2\ell - 1} \hat E_k(s) \varphi_{2k}(y) + \hat R(y,s),
\end{equation}
where 
\begin{equation}\label{est:Ehatkandell}
\sum_{k = 0, k \ne \ell}^{2\ell-1}|\hat E_k(s)| \lesssim s^{-2}, \quad  \quad |\hat E_\ell(s)| \lesssim s^{-3}, \quad \|\hat R(s)\|_{L^2_\rho} \lesssim s^{-3},
\end{equation}
and 
\begin{equation}\label{est:Rhatpointwise}
\forall y \lesssim s^\frac{1}{2\ell}, \quad  \sum_{j = 0}^2 |(\yj \pa_y)^j\hat R(y,s)| \lesssim s^{-3}\yj^{6\ell - 2}.
\end{equation}
In particular, we have
\begin{equation}\label{est:Ehatmid}
\sum_{j = 0}^2\int_{1}^\infty \frac{|(y\pa_y)^j \hat E(y,s)|^2}{y^{4\ell +2}} \frac{dy}{y} \lesssim s^{-2 - \frac{3}{\ell}}. 
\end{equation}
\end{lemma}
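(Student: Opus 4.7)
The strategy is to exploit two layers of cancellation: first the profile ODE \eqref{equation-Q} which eliminates the bulk of $E$, then the careful choice of the correction $\hat\Psi$ which kills the projection of the residual onto the null mode $\varphi_{2\ell}$. For (i), substituting $v = Q(\xi)$ into \eqref{eq:vys} and invoking the profile identity $dQ(Q - 1/d) + (Q - 1/2)\xi Q' = 0$ shows that every transport and quadratic term in the error cancels, reducing to $E = \Delta_{d+2}Q(\xi) - \pa_s Q(\xi)$. The chain rule gives $\Delta_{d+2}Q(y) = s^{-1/\ell}(\Delta_{d+2}Q)(\xi)$ and $\pa_s Q(\xi) = -\xi Q'(\xi)/(2\ell s)$; combining with the bounds on $Q'$ and $Q''$ from \eqref{est:TaylorQat0}--\eqref{est:QxiInf} yields $\|E\|_{L^\infty} + \|y\pa_y E\|_{L^\infty} \lesssim s^{-1/\ell}$ (the term $\pa_s Q = \Oc(s^{-1})$ is lower order since $1/\ell < 1$). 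For $\hat E = E + \Hs\hat\Psi - \pa_s\hat\Psi + NL(\hat\Psi)$, modulo the cutoff commutators from $\chi_{_0}$ (which are supported in $\xi \sim 1$ and hence exponentially small in $L^2_\rho$ by the Gaussian weight), the subtraction of $(2\alpha y^2)^\ell/(2\ell+d)$ from $\varphi_{2\ell}$ in the definition \eqref{def:PsiApp} kills the leading $y^{2\ell}$ monomial, so $|\hat\Psi| + |y\pa_y\hat\Psi| \lesssim \yj^{2\ell-2}/s \lesssim s^{-1/\ell}$ on the support $\{\xi \leq 2\}$. A term-by-term computation then gives \eqref{est:ELinf}.

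For (ii) I project $\hat E$ onto the orthogonal family $\{\varphi_{2k}\}$. Taylor-expanding $Q(\xi)$ at $\xi = 0$ via \eqref{eq:Qat0} and converting each monomial $(2\alpha y^2)^n$ into the $\varphi_{2m}$-basis using \eqref{re:zntoHn}, each projection $\hat E_k$ with $k \neq \ell$ is $\Oc(s^{-2})$ coming from the $\xi^{2\ell}$ term in the expansion of $Q$; for the null mode $k = \ell$, the projection of $E$ and that of $\Hs\hat\Psi - \pa_s\hat\Psi + NL(\hat\Psi)$ cancel at order $s^{-2}$ exactly because the matching constant $c_\ell$ in \eqref{eq:con_cell} was chosen so that the inner expansion of $Q$ agrees with the formal spectral computation of Section \ref{sec:blowupprofile}, leaving $\hat E_\ell = \Oc(s^{-3})$. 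The remainder $\hat R$ is then the sum of the $\Oc(\yj^{6\ell}/s^3)$ Taylor tail of $Q(\xi)$ and the $\Oc(\yj^{4\ell-4}/s^2)$ contribution from $NL(\hat\Psi)$, which immediately gives the pointwise bound \eqref{est:Rhatpointwise} and, using the exponential weight $\rho$, the $L^2_\rho$ bound in \eqref{est:Ehatkandell}. The weighted integral bound \eqref{est:Ehatmid} follows by splitting $[1,\infty) = [1, Cs^{1/2\ell}] \cup [Cs^{1/2\ell}, \infty)$ and applying the pointwise estimate on the inner piece, while on the outer piece $\hat E = E$ and the decay $Q(\xi) \sim \xi^{-2}$ delivers the same $s^{-2-3/\ell}$ rate.

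\textbf{Main obstacle.} The delicate step is the improved estimate $|\hat E_\ell| \lesssim s^{-3}$, which demands a precise algebraic cancellation coupling the coefficient of $\xi^{4\ell}$ in the Taylor expansion \eqref{eq:Qat0}, the explicit values $B_3 = 39360$ and $B_2 = 576$ from \eqref{def:valueB3}--\eqref{def:valueB2}, the matching constant $c_\ell$ from \eqref{eq:con_cell}, and the change-of-basis formula \eqref{re:zntoHn}. Any miscomputation would leave $\hat E_\ell = \Oc(s^{-2})$, which is insufficient to close the null-mode bootstrap \eqref{est:vepell}, because the ODE for $\hat\vep_\ell$ must absorb a contribution $\frac{2}{s}\hat\vep_\ell$ arising from the small potential correction in $\Hs$ that is responsible for the $\log s$ factor in \eqref{est:vepell}.
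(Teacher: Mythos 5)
Your proposal tracks the paper's proof quite closely in overall structure: the identification of $E$ via the profile equation, the $L^\infty$ bound on $\hat\Psi$ after removing the leading $y^{2\ell}$ monomial, the projection of $\hat E$ onto the $\varphi_{2k}$ basis, and the splitting of the weighted integral. The identification of the null-mode cancellation as the key step is also correct, and the paper does verify it by an explicit symbolic computation with the exact values $B_3 = 39360$ and $B_2 = 576$; your framing (that the cancellation is a consequence of the choice of $c_\ell$) is morally right but would still need to be checked, as the paper does.

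However, there is a genuine error in your description of $\hat R$. You claim that $\hat R$ is the sum of the Taylor tail of $Q$ (order $s^{-3}$) and the $\Oc(\langle y\rangle^{4\ell-4}/s^2)$ contribution from $NL(\hat\Psi)$, and then assert that this gives the bound $|\hat R| \lesssim s^{-3}\langle y\rangle^{6\ell-2}$ and $\|\hat R\|_{L^2_\rho} \lesssim s^{-3}$. This is inconsistent: for $y = \Oc(1)$, a term of size $\Oc(\langle y\rangle^{4\ell-4}/s^2)$ is $\Oc(s^{-2})$, and the $L^2_\rho$ norm (which is concentrated at $y = \Oc(1)$) would then be $\Oc(s^{-2})$, not $\Oc(s^{-3})$. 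The resolution — which the paper makes explicit in \eqref{dec:Ehattmp}--\eqref{def:P4ell2} — is that the leading $s^{-2}$ part of $NL(\hat\Psi)$, namely $\frac{1}{B_\ell^2 s^2}\big(d\tilde\varphi_{2\ell}^2 + \tfrac12 y\partial_y\tilde\varphi_{2\ell}^2\big)$, is an even polynomial of degree $4\ell-4 \leq 4\ell - 2$, hence lies entirely in $\operatorname{span}\{\varphi_0,\dots,\varphi_{4\ell-2}\}$ and is therefore fully absorbed into $\sum_{k=0}^{2\ell-1}\hat E_k\varphi_{2k}$, contributing nothing to $\hat R$. (Indeed, for $y \leq s^{1/2\ell}$ the cutoff $\chi_0(\xi)$ equals one, so $NL(\hat\Psi)$ is exactly $s^{-2}$ times a fixed polynomial there.) The same is true of all the other $\Oc(s^{-2})$ terms collected in $P_{4\ell-2}$: they are polynomials of degree $\leq 4\ell - 2$ and fall on the finite-dimensional side. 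Only the genuine $\Oc(s^{-3}\langle y\rangle^{6\ell-2})$ Taylor tails survive in $\hat R$. Without this observation, the claimed bounds on $\hat R$ — and hence the rest of your proof of part (ii) — do not hold.
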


\begin{remark} The improved estimate for $\hat E_\ell$ reaching the order $s^{-3}$ comes from a crucial algebraic cancellation to remove the term of order $s^{-2}$ thanks to the precise choice of \eqref{eq:con_cell}. 
\end{remark}

\begin{proof} (i) From \eqref{def:E}, we have the estimate for all $y \geq 0,$
\begin{align*}
 |E(y,s)| &=  |E(\xi, s)| =  \big|-\pa_s Q(\xi) + \Delta_{d+2}Q(\xi) \big|\\
 & \lesssim s^{-1}\Big|\xi Q'(\xi)\Big|  + s^{-\frac{1}{\ell}} \Big|Q''(\xi) + \xi^{-1}Q'(\xi) \Big| \lesssim s^{-\frac{1}{\ell}},
\end{align*}
and 
\begin{align*}
 |y\pa_yE(y,s)| = |\xi \pa_\xi E(\xi, s)| = s^{-1}\Big| (\xi \pa_\xi)^2Q(\xi)\Big|  + s^{-\frac{1}{\ell}} \Big| \xi \pa_\xi Q''(\xi) + \xi \pa_\xi (\xi^{-1}Q'(\xi))\Big| \lesssim s^{-\frac{1}{\ell}}.
\end{align*}
As for $\hat E$, we have by the definitions of $\varphi_{2\ell}$ and $\hat \Psi$,
$$j = 0, \cdots, 3, \quad  |(\yj\pa_y)^j\hat \Psi(y,s)| \lesssim s^{-\frac{1}{\ell}} , \quad |\pa_s (\yj \pa_y)^j\hat \Psi(y,s)| \lesssim s^{-1 - \frac{1}{\ell}}, \quad \forall y \geq 0. $$
Then, we have from \eqref{def:Ehat},  \eqref{def:Hs} and the bound $Q + |\xi \pa_\xi Q| \lesssim 1$, 
\begin{align*}
|\hat E(y,s)| &= | E(y,s) - \pa_s \hat \Psi + \Hs \hat \Psi + NL(\hat \Psi)| \\
& \lesssim |E(y,s)| + |\pa_s \hat \Psi| + |\Delta_{d+2} \hat \Psi| + |y\pa_y \hat \Psi| + |\hat \Psi| + |y \pa_y\hat \Psi \hat \Psi | + |\hat \Psi|^2 \lesssim s^{-\frac{1}{\ell}},
\end{align*}
and a similar bound for $|y\pa_y \hat E|$, which concludes the proof of $(i)$.\\

(ii) From \eqref{def:Ehat}, \eqref{def:Hs} and \eqref{def:Hxell}, we rewrite $\hat E$ as
\begin{align*}
\hat E(y,s) &= -\pa_s \Psi + \Delta_{d+1} Q(\xi) + \Hs_{\frac 1{2\ell}} \hat \Psi + \hat \Psi \\
& - \Big( \frac{1}{d} - Q(\xi) \Big) y\pa_y \hat \Psi + \big( 2d Q - 2 + \xi \pa_\xi Q\big) \hat \Psi + NL(\hat \Psi). 
\end{align*}
We use the expansions \eqref{eq:Psiat0} and \eqref{eq:Qat0} of $\Psi$ and $Q$, the cancellations $\big(\Hs_{\frac{1}{2\ell}} + \textup{Id}\big) \varphi_{2\ell} = 0$ and $ \big(- \frac{1}{2\ell} y\pa_y + \textup{Id}\big) y^{2\ell} = 0 $ to write for $y \leq s^\frac{1}{2\ell}$ ($\xi \leq 1$), 
\begin{align*}
 -\pa_s \Psi + &\Delta_{d+2} Q(\xi) + \Hs_{\frac 1{2\ell}} \hat \Psi + \hat \Psi \\
& \qquad =  \frac{1}{s^2} \left[ - \frac{\varphi_{2\ell}}{B_\ell} + \frac{\ell d}{B_\ell^2(2\ell + d)^2} \Delta_{d + 2} (2\alpha y^2)^{2\ell}\right]  + \Oc \Big( \frac{\yj^{6\ell - 2}}{s^3} \Big).
\end{align*}
We notice that
$$\tilde \varphi_{2\ell}(y) =\varphi_{2\ell}(y) - \frac{(2\alpha y^2)^\ell}{2\ell + d}  = \Oc(\yj^{2\ell - 2}), \quad \hat \Psi(y,s) = -\frac{1}{B_\ell s} \tilde \varphi_{2\ell} \chi_{_0}(\xi) = \Oc\Big( \frac{\yj^{2\ell - 2}}{s}\Big),$$
and use again \eqref{eq:Qat0} to expand (keep track the terms of order $\Oc(s^{-2})$ only), 
\begin{align*}
&- \Big( \frac{1}{d} - Q(\xi) \Big) y\pa_y \hat \Psi =  \frac{1}{s^2}  \frac{(2\alpha y^2)^\ell }{B_\ell^2(2\ell + d)} y\pa_y \tilde \varphi_{2\ell}   + \Oc \Big( \frac{\yj^{6\ell - 2}}{s^3} \Big), \\
&\big( 2d Q - 2 + \xi \pa_\xi Q\big) \hat \Psi  = \frac{1}{s^2} \frac{(2d + 2\ell) (2\alpha y^2)^\ell}{B_\ell^2 (2\ell + d)} \tilde \varphi_{2\ell}   + \Oc \Big( \frac{\yj^{6\ell - 2}}{s^3} \Big),\\
& NL(\hat \Psi)  = \frac{1}{B_\ell^2 s^2} \left[ d  \tilde \varphi_{2\ell}^2  + \frac{1}{2}y\pa_y  \tilde \varphi_{2\ell}^2 \right] +  \Oc \Big( \frac{\yj^{6\ell - 2}}{s^3} \Big).
\end{align*}
A collection of these expansions yields 
\begin{align}\label{dec:Ehattmp}
\hat E(y,s) = \frac{1}{B_\ell^2 s^2}P_{4\ell - 2}(y) +  \hat{R}(y,s) \quad \textup{with} \quad \sum_{j=0}^2 | (\yj \pa_y)^j \hat R(y,s)| = \Oc \Big( \frac{\yj^{6\ell - 2}}{s^3} \Big),
\end{align}
where 
\begin{align}
P_{4\ell - 2}(y) &= -B_\ell \varphi_{2\ell} + \frac{\ell d}{(2\ell + d)^2} \Delta_{d + 2} (2\alpha y^2)^{2\ell} + \frac{(2\alpha y^2)^\ell }{(2\ell + d)} y\pa_y  \tilde \varphi_{2\ell} \nonumber \\
& \qquad +  \frac{(2d + 2\ell) (2\alpha y^2)^\ell}{(2\ell + d)} \tilde \varphi_{2\ell} +  d  \tilde \varphi_{2\ell}^2  + \frac{1}{2}y\pa_y  \tilde \varphi_{2\ell}^2. \label{def:P4ell2}
\end{align}
A projection of $\hat E$ onto $\varphi_{2k}$ immediately gives 
$$ \hat E_k(s) = \langle \hat E, \varphi_{2k} \rangle_\rho = \Oc(s^{-2}), \quad k \in \mathbb{N}.$$
We claim that the projection of $P_{4\ell - 2}$ onto $\varphi_{2\ell}$ is identically zero,  i.e.
\begin{equation}\label{est:ProjectionP4ell2}
\langle P_{4\ell - 2}, \varphi_{2\ell} \rangle_\rho = 0,
\end{equation}
from which we get the improved bound 
$$\hat E_\ell(s) = \langle \hat E, \varphi_{2\ell} \rangle_\rho = \langle P_{4\ell - 2}, \varphi_{2\ell} \rangle_\rho + \langle \hat R, \varphi_{2\ell} \rangle_\rho = \Oc(s^{-3}).$$
This concludes the proofs of \eqref{dec:Ehat}, \eqref{est:Ehatkandell} and \eqref{est:Rhatpointwise}. The estimate \eqref{est:Ehatmid} is straightforward from \eqref{dec:Ehattmp} and \eqref{est:ELinf}. Indeed, we have the estimate for $j = 0,1,2$, 
\begin{align*}
\int_{1}^\infty \frac{| (y \pa_y)^j \hat E |^2}{y^{4\ell + 2}} \frac{dy}{y} &\lesssim  \int_1^{s^\frac{1}{2\ell}} \Big( \frac{| (y \pa_y)^j P_{4\ell - 2}|^2}{s^4 y^{4\ell + 2}} + \frac{| (y \pa_y)^j \hat R|^2}{ y^{4\ell + 2}}\Big) \frac{dy}{y} + \int_{s^\frac{1}{2\ell}}^\infty \frac{|(y \pa_y)^j \hat E|^2}{y^{4\ell +2}} \frac{dy}{y}\\
& \lesssim \int_1^{s^\frac{1}{2\ell}} \Big(\frac{y^{4\ell - 7}}{s^4} +\frac{y^{6\ell - 7}}{s^6} \Big)  dy + s^{-\frac{2}{\ell}}\int_{s^\frac{1}{2\ell}}^\infty \frac{dy}{y^{4\ell +3}} \lesssim s^{-2 - \frac{3}{\ell}}.
\end{align*}
It remains to prove \eqref{est:ProjectionP4ell2} to complete the proof of Lemma \ref{lemm:EEhatys}. \end{proof}

\begin{proof}[Proof of \eqref{est:ProjectionP4ell2}] We use the exact value $(d, \ell)$ to simplify the computation which we can easily implement  with Matlab symbolic. Recall from \eqref{def:phi2ell} and the relation 
$$\varphi_{2\ell}(y) = \frac{1}{y^d}\int_0^y \phi_{2\ell}(\zeta) \zeta^{d-1} d\zeta,$$ we have for $(d, \ell) = (3,3)$:
\begin{align*}
\varphi_6(y) &= \frac{y^6}{243} - \frac{2}{3}y^4 + 28y^2 - 280, \quad \tilde{\varphi}_6(y) = - \frac{2}{3}y^4 + 28y^2 - 280,\\
P_{10}(y) &= -B_3 \varphi_6(y) -\frac{4\,y^{10} }{243}+\frac{1148\,y^8 }{243}-\frac{19264\,y^6 }{81}+\frac{17360\,y^4 }{3}-62720\,y^2 +235200,
\end{align*}
and recall from \eqref{def:valueB3} that $B_3 = 39360$ to get
$$ \frac{1}{\| \varphi_{6}\|^2_\rho} \langle P_{10}, \varphi_{6} \rangle_\rho  = -B_3 + 39360 = 0. $$
Similarly, we have for $(d, \ell) = (4,2)$:
\begin{align*}
\varphi_4(y) &= \frac{y^4}{32} - 2 y^2 + 24, \quad \tilde{\varphi}_4(y) = - 2 y^2 + 24,\\
P_{6}(y) &= -B_2 \varphi_4(y)  - \frac{y^6}{8} + 33y^4 - 480y^2 + 2304,
\end{align*}
and recall from \eqref{def:valueB2} that $B_2 = 576$ to get
$$ \frac{1}{\| \varphi_{4}\|^2_\rho} \langle P_{6}, \varphi_{4} \rangle_\rho  = -B_2 + 576 = 0. $$
This concludes the proof of \eqref{est:ProjectionP4ell2} and completes the proof of Lemma \ref{lemm:EEhatys}. 
\end{proof}

\subsection{Dynamics of the finite dimensional part}
We obtain in this section the ODE satisfied by the finite dimensional part $\hat \vep_\natural$. We claim the following.
\begin{lemma}[Dynamics of the finite dimensional part $\hat \vep_\natural$] \label{lemm:finitepart} Let $\vep(s) \in \Sc_A(s)$, we have \\
(i) \textup{(Decomposition of $\tilde E$)} The function $\tilde{E}$ defined by \eqref{def:Etil} can be decomposed as 
\begin{equation}\label{est:decompEys}
\tilde E(y,s) = \sum_{k = 0, k\ne \ell}^{2\ell-1} \Big[ \hat E_k -\hat \vep_k' +  \big(1 - \frac{k}{\ell}\big)\hat \vep_k \Big] \varphi_{2k}(y) + \big(\hat E_\ell - \hat \vep_\ell' - \frac{2}{s} \hat \vep_\ell\big) \varphi_{2\ell}+  \tilde R(y,s),
\end{equation}
where $\hat E_k$ is introduced in \eqref{dec:Ehat} and satisfies the estimate \eqref{est:Ehatkandell}, 
\begin{equation}\label{est:RtilL2rho}
  \|\tilde R(s)\|_{L^2_\rho} \lesssim s^{-3}. 
 \end{equation}
(ii) \textup{(Dynamics of $\hat \vep_\natural$)}  
\begin{align} \label{eq:ODEvepk}
\sum_{k = 0, k \ne \ell}^{2\ell - 1}\left|\hat \vep_k' + \big(1 - \frac{k}{\ell}\big)\hat \vep_k \right| \lesssim s^{-2}, \quad \left| \hat \vep_\ell' + \frac{2}{s} \hat \vep_\ell \right| \lesssim s^{-3}. 
\end{align}
 
\end{lemma}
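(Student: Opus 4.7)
Using the definitions \eqref{def:Ehat}, \eqref{def:Etil} together with $\Psi = Q + \hat \Psi$, a direct subtraction yields
$$\tilde E \;=\; \hat E \;-\; \pa_s \hat \vep_\natural \;+\; \Hs \hat \vep_\natural \;+\; NL(\hat \vep_\natural) \;+\; 2d\,\hat \Psi\hat \vep_\natural \;+\; y\pa_y\big(\hat \Psi\hat \vep_\natural\big).$$
I then split $\Hs = \Hs_{\frac{1}{2\ell}} + \textup{Id} + \Vc^\sharp$, where $\Vc^\sharp := (Q - 1/d)\,y\pa_y + [2d(Q - 1/d) + \xi \pa_\xi Q]$, exploiting the identity $\tfrac12 - \tfrac1{2\ell} = \tfrac1d$. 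Applying $\Hs_{\frac{1}{2\ell}}\varphi_{2k} = -\tfrac{k}{\ell}\varphi_{2k}$ to $\hat \vep_\natural = \sum_k \hat \vep_k\varphi_{2k}$ and plugging in the decomposition $\hat E = \sum_k \hat E_k \varphi_{2k} + \hat R$ from Lemma \ref{lemm:EEhatys}, the ``clean'' contributions reorganise into the bracket $[\hat E_k - \hat \vep_k' + (1-k/\ell)\hat \vep_k]\varphi_{2k}$ for each $k \ne \ell$, while $\hat R$, the off-diagonal remainders of $\sum_k \hat \vep_k \Vc^\sharp \varphi_{2k}$, the cross terms, and $NL(\hat \vep_\natural)$ are collected into $\tilde R$.

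\textbf{The $\ell$-th mode: main obstacle.} The bracket in the $\ell$-th slot is degenerate since $1 - \ell/\ell = 0$, so the correction $-\tfrac{2}{s}\hat \vep_\ell\varphi_{2\ell}$ must be extracted from the combined diagonal projections onto $\varphi_{2\ell}$ of $\hat \vep_\ell \Vc^\sharp \varphi_{2\ell}$ and of the cross term $2d\hat \Psi\hat \vep_\ell\varphi_{2\ell} + y\pa_y(\hat \Psi\hat \vep_\ell\varphi_{2\ell})$. Using the expansion \eqref{eq:Qat0} of $Q$ near $\xi = 0$ and the approximation $\hat \Psi \approx -\tfrac{1}{B_\ell s}\tilde \varphi_{2\ell}$ on the essential support of $\rho$, each contribution is of size $\Oc(s^{-1})$, and they combine into $\tfrac{C_\ell}{s}\hat \vep_\ell\varphi_{2\ell} + \Oc(s^{-2})$ for some explicit constant $C_\ell$. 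The lemma then reduces to verifying the algebraic identity $C_\ell = -2$, which crucially depends on the matching value $c_\ell = \frac{(2\alpha)^\ell d^{\ell+1}}{B_\ell(d+2\ell)}$ from \eqref{eq:con_cell}; I would check it explicitly for the two cases $(d,\ell) = (4,2)$ and $(3,3)$ using the closed forms of $\varphi_{2\ell}$ listed in Section \ref{sec:blowupprofile}, in the same symbolic-computation vein as \eqref{est:ProjectionP4ell2}. This cancellation, which propagates the matching condition from the profile construction into the reduced dynamics, is the heart of the lemma; everything else is essentially bookkeeping.

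\textbf{Remainder bound and part (ii).} To obtain $\|\tilde R\|_{L^2_\rho}\lesssim s^{-3}$ I combine: $\|\hat R\|_{L^2_\rho}\lesssim s^{-3}$ from Lemma \ref{lemm:EEhatys}; the bootstrap bounds $|\hat \vep_k|\lesssim s^{-2}$ ($k\ne\ell$) and $|\hat \vep_\ell|\lesssim s^{-2}\log s$; the estimate $\|\Vc^\sharp\varphi_{2k}\|_{L^2_\rho}\lesssim s^{-1}$ (arising from the factor $\xi^{2\ell} = y^{2\ell}/s$ in the expansions of $Q - 1/d$ and $\xi\pa_\xi Q$ together with the Gaussian localisation of $\rho$); and $\|\hat \Psi\|_{L^\infty(\textup{supp}\,\rho)}\lesssim s^{-1}$. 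The off-diagonal pieces together with the sub-leading diagonal corrections give a total $\Oc(s^{-3})$ bound, confirming \eqref{est:RtilL2rho}. For part (ii) I project \eqref{eq:veptil} against $\varphi_{2k}$ in $L^2_\rho$: the orthogonality $\langle \tilde\vep,\varphi_{2k}\rangle_\rho = 0$ (for $0 \le k \le 2\ell-1$) and self-adjointness of $\Hs_{\frac{1}{2\ell}}$ kill both the time derivative and the eigenvalue contribution, yielding
$$\langle \tilde E,\varphi_{2k}\rangle_\rho = -\big\langle \tilde\Vc\tilde\vep + NL(\tilde\vep),\varphi_{2k}\big\rangle_\rho.$$
Inserting the decomposition \eqref{est:decompEys} and rearranging, then using $\|\tilde\vep\|_{L^2_\rho}\lesssim s^{-3}$, $\|\tilde\Vc\tilde\vep\|_{L^2_\rho}\lesssim s^{-4}$, together with $|\hat E_k|\lesssim s^{-2}$ ($k\ne\ell$), $|\hat E_\ell|\lesssim s^{-3}$ and $\|\tilde R\|_{L^2_\rho}\lesssim s^{-3}$, the claimed ODEs \eqref{eq:ODEvepk} follow at once.
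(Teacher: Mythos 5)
Your proposal is correct and follows essentially the same route as the paper: the same decomposition $\Hs = \Hs_{1/2\ell} + \textup{Id} + \Vc^\sharp$ (the paper writes the $\hat\Psi$-pieces absorbed into $\tilde P_1 = 1/d - \Psi$ and $\tilde P_2 = 2d\Psi - 2 + \xi\pa_\xi\Psi$ rather than keeping them separate, but this is cosmetic), the same extraction of the $-\tfrac{2}{s}\hat\vep_\ell$ coefficient via the $\varphi_{2\ell}$-projection of the degree-$4\ell$ polynomial $d\varphi_{2\ell}^2 + y\varphi_{2\ell}\pa_y\varphi_{2\ell}$ checked by symbolic computation for $(d,\ell)\in\{(3,3),(4,2)\}$, and the same use of orthogonality to eliminate $\pa_s\tilde\vep$ and the eigenvalue contribution in part (ii). One small remark: what ultimately produces the factor $-2$ is the identity $\|\varphi_{2\ell}\|_\rho^{-2}\langle d\varphi_{2\ell}^2 + y\varphi_{2\ell}\pa_y\varphi_{2\ell},\varphi_{2\ell}\rangle_\rho = B_\ell$ multiplied against the coefficient $-2/(B_\ell s)$ coming from $\Psi = \tfrac1d - \tfrac{\varphi_{2\ell}}{B_\ell s} + \Oc(s^{-2}\langle y\rangle^{2\ell})$; the matching value $c_\ell$ in \eqref{eq:con_cell} is what guarantees this last expansion holds at the stated accuracy, so your attribution is accurate, just phrased one step upstream of where the paper places the emphasis.
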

\begin{proof} (i) From the definitions \eqref{def:Etil} and \eqref{def:Ehat} of $\tilde{E}$ and $\hat E$, the decomposition \eqref{dec:Ehat} and $\Hs_{\frac{1}{2\ell}} \varphi_{2k} = -\frac{k}{\ell} \varphi_{2k}$, we have by 
\begin{align*}
\tilde{E}(y,s) &= \hat E(y,s) - \pa_s \hat \vep_\natural + (\Hs_\frac{1}{2\ell} + \textup{Id}) \hat \vep_\natural - \tilde P_1 y \pa_y \hat \vep_\natural + \tilde P_2 \hat \vep_\natural + NL(\hat \vep_\natural)\\
& = \sum_{k = 0}^{2\ell - 1}\Big[ \hat E_k - \hat \vep_k' + \big(1 - \frac{k}{\ell} \big) \Big] \varphi_{2k} + \hat R  - \tilde P_1 y \pa_y \hat \vep_\natural + \tilde P_2 \hat \vep_\natural + NL(\hat \vep_\natural),
\end{align*} 
where $NL(\hat \vep_\natural) = d \hat \vep_\natural^2 + y \hat \vep_\natural\pa_y \hat \vep_\natural$,  $\tilde{P}_1$ and $\tilde{P}_2$ are defined by 
$$\tilde{P}_1 = \frac{1}{d} - \Psi, \quad   \tilde{P}_2 =  2d \Psi - 2 + \xi \pa_\xi \Psi.$$
From the expansion \eqref{eq:Psiat0} of $\Psi$, we have the rough estimate 
$$ \forall y \geq 0, \quad |\tilde{P}_1(y,s)| + |y \pa_y \tilde{P}_1(y,s)| + |\tilde{P}_2(y,s)| \lesssim \frac{\yj^{2\ell}}{s}.$$ 
From the bootstrap bounds \eqref{est:vepk} and \eqref{est:vepell}, we have
$$ \forall y \geq 0, \quad  |\hat \vep_\natural(y,s) - \hat \vep_\ell \varphi_{2\ell}| \lesssim \frac{1}{s^2} \yj^{4\ell - 2},  \quad |\hat \vep_\natural(y,s)| \lesssim \frac{\log s}{s^2} \yj^{4\ell - 2}.$$
Using these estimates, \eqref{est:Ehatkandell} and Cauchy-Schwarz inequality, we end up with 
\begin{equation}
\|\hat R  - \tilde P_1 y \pa_y (\hat \vep_\natural(y,s) - \hat \vep_\ell \varphi_{2\ell}) + \tilde P_2 (\hat \vep_\natural(y,s) - \hat \vep_\ell \varphi_{2\ell}) + NL(\hat \vep_\natural)\|_{L^2_\rho} \lesssim s^{-3}.
\end{equation}
We claim the following:
\begin{equation}\label{est:tmpP12onvarp2ell}
 \langle - \tilde P_1 y \pa_y \varphi_{2\ell} + \tilde{P}_2 \varphi_{2\ell}, \varphi_{2\ell} \rangle_\rho \; \hat \vep_\ell(s)= -\frac{2}{s} \hat \vep_\ell(s) + \Oc(s^{-4}\log s ).
\end{equation}
We recall from \eqref{eq:Psiat0} the expansion $\Psi(y,s) = \frac{1}{d} - \frac{\varphi_{2\ell}}{B_{\ell}s} + \Oc(s^{-2} \yj^{2\ell})$, and write (keep track only terms of order $\Oc(s^{-1})$)
\begin{align*}
- \tilde P_1 y \pa_y \varphi_{2\ell} + \tilde{P}_2 \varphi_{2\ell} = -\frac{2}{B_\ell s} \Big(d\varphi_{2\ell}^2 +  y \varphi_{2\ell}\pa_y \varphi_{2\ell} \Big) + \Oc(s^{-2} \yj^{6\ell}).
\end{align*}
A direct computation (Matlab symbolic) yields 
$$ \frac{1}{\| \varphi_{2\ell}\|^2_\rho} \langle d\varphi_{2\ell}^2 +  y \varphi_{2\ell}\pa_y \varphi_{2\ell}, \varphi_{2\ell} \rangle_\rho = \left\{ \begin{array}{ll} 39360 \; &\textup{if} \; (d, \ell) = (3,3)\\
576 \; &\textup{if} \; (d, \ell) = (4,2)
\end{array}   \right.  \equiv B_\ell,$$
which agrees with the formal computation given at page \pageref{sec:computeBell} where the constant $B_\ell$ is the projection of the nonlinear term (in the original setting) onto the eigenmode $\phi_{2\ell}$. 
This proves \eqref{est:tmpP12onvarp2ell} and concludes the proof of \eqref{est:decompEys}. \\

\noindent (ii) We project the equation \eqref{eq:veptil} onto the eigenmode $\varphi_{2k}$ and use the orthogonality \eqref{eq:orhtogonalvep} to get 
\begin{equation*}
0 = \langle -\tilde V_1 y\pa_y \tilde{ \vep} + \tilde V_2 \tilde{\vep} + NL(\tilde \vep) + \tilde{E}, \varphi_{2k} \rangle_\rho. 
\end{equation*}
From \eqref{def:V12til}, \eqref{eq:Psiat0} and the bootstrap bounds \eqref{est:vepk}, \eqref{est:vepell}, we have the rough bound 
$$\forall y \geq 0, \quad |\tilde{V}_1(y,s)| + |y\pa_y \tilde{V}_1(y,s)| + |\tilde{V}_2(y,s)| \lesssim  \frac{\yj^{4\ell - 2}}{s}.$$
We use Cauchy-Schwarz inequality, integration by parts and the fact that $\rho$ is exponential decay to estimate
$$\big|\langle -\tilde V_1 y\pa_y \tilde{ \vep} + \tilde V_2 \tilde{\vep}, \varphi_{2k}\rangle_\rho \big| \lesssim s^{-1}\| \tilde{\epsilon}(s)\|_{L^2_\rho}. $$
For the nonlinear term, we use the relation $\tilde{\vep} = \hat \vep - \hat \vep_\natural$, the pointwise estimate \eqref{est:vephatpointswise} and the bootstrap bounds \eqref{est:vepk}, \eqref{est:vepell} to get 
$$\forall y \geq 0, \quad |\tilde{\vep}(y,s)|  + |y \pa_y \tilde{\vep}(y,s)| \lesssim A^6 s^{-1 - \frac{3}{2\ell}} \yj^{4\ell - 2}. $$
Then, using  Cauchy-Schwarz inequality and the exponential decay of $\rho$  yields 
$$\big| \langle  NL(\tilde \vep), \varphi_{2k} \rangle_\rho \big| \lesssim A^6 s^{-1 - \frac{3}{2\ell}} \| \tilde{\vep}(s)\|_{L^2_\rho}. $$
Putting all these estimates together with \eqref{est:decompEys} and the bootstrap bound \eqref{est:vepL2rho} yield \eqref{eq:ODEvepk} and completes the proof of Lemma \ref{lemm:finitepart}.
\end{proof}

\subsection{$L^2_\rho$-estimate}
We give the formulation to control $L^2_\rho$ of $\tilde{\vep}$. The orthogonality \eqref{eq:orhtogonalvep}, which provides the spectral gap \eqref{est:spectralgap}, plays a crucial role in the improvement of $L^2_\rho$ bootstrap estimate \eqref{est:vepL2rho}. We claim the following. 
\begin{lemma}[Energy estimate in $L^2_\rho$] \label{lemm:L2rho} Let $A \geq 1$ and $s \geq s_0 = s_0(A) \gg 1$ and  $\vep(s) \in \Sc_A(s)$, there is $\delta > 0$ such that
\begin{equation}
\frac{d}{ds}\|\tilde \vep\|_{L^2_\rho}^2 \leq - \frac{1}{2} \|\tilde \vep\|_{L^2_\rho}^2 + Cs^{-6},
\end{equation}
where $C$ is independent of $A$. 
\end{lemma}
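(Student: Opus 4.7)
The plan is a standard $L^2_\rho$ energy estimate: multiply equation \eqref{eq:veptil} by $\tilde \vep \rho$ and integrate over $\Rb_+$ to obtain
\begin{equation*}
\frac{1}{2}\frac{d}{ds}\|\tilde \vep\|^2_{L^2_\rho} = \langle (\Hs_{\frac{1}{2\ell}} + \textup{Id})\tilde \vep, \tilde \vep\rangle_\rho + \langle \tilde \Vc \tilde \vep, \tilde \vep\rangle_\rho + \langle NL(\tilde \vep), \tilde \vep\rangle_\rho + \langle \tilde E, \tilde \vep\rangle_\rho,
\end{equation*}
and then control each of the four contributions on the right.

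For the first term I invoke the spectral gap \eqref{est:spectralgap}: by construction \eqref{def:vephat}, $\tilde \vep$ is orthogonal in $L^2_\rho$ to $\varphi_{2k}$ for all $k = 0, \dots, 2\ell-1$, so $\tilde \vep$ lies in the invariant subspace of $\Hs_{\frac{1}{2\ell}}+\textup{Id}$ where the eigenvalues are $\leq 1 - 2\ell/\ell = -1$, giving $\langle (\Hs_{\frac{1}{2\ell}} + \textup{Id})\tilde \vep, \tilde \vep\rangle_\rho \leq -\|\tilde \vep\|^2_{L^2_\rho}$. For the perturbative linear term I use the pointwise bounds $|\tilde V_1| + |y\pa_y \tilde V_1| + |\tilde V_2| \lesssim s^{-1} \yj^{4\ell-2}$ (coming from the expansion \eqref{eq:Psiat0} of $\Psi$ together with the bootstrap bounds on $\hat \vep_\natural$). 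Integrating by parts on the transport piece $\tilde V_1 y\pa_y \tilde \vep \cdot \tilde \vep \rho = \tfrac{1}{2}\tilde V_1 y\pa_y(\tilde \vep^2)\rho$ transfers the derivative off $\tilde \vep$; the resulting integrands are polynomial factors weighted by the exponentially decaying $\rho$, which are $L^\infty$-bounded after absorbing the polynomial into $\rho$, yielding $|\langle \tilde \Vc \tilde \vep, \tilde \vep\rangle_\rho| \lesssim s^{-1}\|\tilde \vep\|^2_{L^2_\rho}$, small for $s$ large.

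For the nonlinear term I again integrate by parts on $\int y\tilde \vep^2 \pa_y\tilde \vep \cdot \rho = \tfrac{1}{3}\int y\pa_y(\tilde \vep^3)\rho$, so that $\langle NL(\tilde \vep),\tilde \vep\rangle_\rho$ is a linear combination of $\int \tilde \vep^3 \rho$ and $\int y^2 \tilde \vep^3 \rho$. Using the pointwise estimate \eqref{est:vephatpointswise} together with $\tilde \vep = \hat \vep - \hat \vep_\natural$ and the bootstrap bounds on $\hat \vep_k$, one has $|\tilde \vep(y,s)| \lesssim A^6 s^{-1-\frac{3}{2\ell}} \yj^{4\ell-2}$. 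Factoring one copy of this bound out and integrating the remaining $\tilde \vep^2$ against $\rho$ (with the polynomial factor absorbed into the Gaussian) produces $|\langle NL(\tilde \vep),\tilde \vep\rangle_\rho| \lesssim A^{6} s^{-1-\frac{3}{2\ell}} \|\tilde \vep\|^2_{L^2_\rho}$, which is again absorbed into the spectral gap contribution once $s_0$ is chosen large enough.

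The crucial step is the error term, and the orthogonality is what makes it work. By the decomposition \eqref{est:decompEys}, $\tilde E = \sum_{k=0}^{2\ell-1} c_k(s) \varphi_{2k}(y) + \tilde R(y,s)$. The coefficients $c_k$ are only of order $s^{-2}\log s$, which on its own would be too large; however, since $\tilde \vep$ is orthogonal to each $\varphi_{2k}$ with $0 \leq k \leq 2\ell-1$, all finite-dimensional projections drop out and $\langle \tilde E, \tilde \vep\rangle_\rho = \langle \tilde R, \tilde \vep\rangle_\rho$. By \eqref{est:RtilL2rho} and Young's inequality, $|\langle \tilde R,\tilde \vep\rangle_\rho| \leq C\|\tilde R\|^2_{L^2_\rho} + \tfrac{1}{4}\|\tilde \vep\|^2_{L^2_\rho} \leq Cs^{-6} + \tfrac{1}{4}\|\tilde \vep\|^2_{L^2_\rho}$. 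Collecting all four bounds, choosing $s_0 \gg 1$ so that the $s^{-1}$ and $A^6 s^{-1-3/(2\ell)}$ contributions are at most $\tfrac{1}{4}$, I obtain $\frac{1}{2}\frac{d}{ds}\|\tilde \vep\|^2_{L^2_\rho} \leq -\tfrac{1}{2}\|\tilde \vep\|^2_{L^2_\rho} + Cs^{-6}$, which gives the claim (with room to spare). The main obstacle is therefore not any single estimate but the careful use of the orthogonality condition \eqref{eq:orhtogonalvep} to discard the non-negligible finite-dimensional projection of $\tilde E$; a side technical point is handling the polynomial-growth factors in $\tilde V_j$ and in the pointwise bound on $\tilde \vep$ against the Gaussian weight $\rho$.
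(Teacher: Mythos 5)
Your overall skeleton matches the paper's: take the $L^2_\rho$ scalar product of equation \eqref{eq:veptil} with $\tilde\vep$, invoke the spectral gap \eqref{est:spectralgap} from the orthogonality to $\varphi_{2k}$, and for the error term use the decomposition \eqref{est:decompEys} so that the orthogonality kills the finite-dimensional part and leaves only $\langle\tilde R,\tilde\vep\rangle_\rho$, bounded via Young's inequality and \eqref{est:RtilL2rho}. That part is correct and is essentially what the paper does.

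However, there is a genuine gap in the way you dispose of the perturbative and nonlinear terms. After integrating by parts you are left with integrals of the form $s^{-1}\int_0^\infty \yj^m\,\tilde\vep^2\,\rho\,dy$ (and, for $NL$, $A^{6}s^{-1-\frac{3}{2\ell}}\int_0^\infty \yj^m\,\tilde\vep^2\,\rho\,dy$) with $m$ of order $4\ell$. You claim to control these by ``absorbing the polynomial into $\rho$'' to conclude $\lesssim s^{-1}\|\tilde\vep\|^2_{L^2_\rho}$. This does not work: the quantity $\int \yj^m\tilde\vep^2\rho\,dy$ is a strictly stronger weighted norm of $\tilde\vep$ than $\|\tilde\vep\|^2_{L^2_\rho}$ and is not controlled by it, no matter how fast $\rho$ decays, because the growing factor $\yj^m$ sits in front of $\tilde\vep^2$, not in front of a fixed bounded function. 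The paper resolves this by splitting the domain at $y=s^\kappa$ for a small $\kappa>0$: on $\{y\le s^\kappa\}$ the polynomial weight is at most $s^{\kappa m}$, which is beaten by $s^{-1}$ and gives a factor $s^{-\kappa}\|\tilde\vep\|^2_{L^2_\rho}$; on $\{y\ge s^\kappa\}$ one uses the pointwise bound \eqref{est:veptilpointwise} on $\tilde\vep$ together with the Gaussian decay of $\rho$ to produce an exponentially small remainder $A^{6}e^{-\eta s^{2\kappa}}$. Both pieces are then made $\le \frac14\|\tilde\vep\|^2_{L^2_\rho}$ (plus a harmless $A$-dependent additive term absorbed into $C s^{-6}$ only after taking $s_0=s_0(A)$ large). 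Without this split your estimate does not close, and moreover the constant in front of $\|\tilde\vep\|^2_{L^2_\rho}$ cannot be taken $A$-independent the way your argument implies, since the pointwise bound on $\tilde\vep$ already carries a power of $A$.
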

\begin{proof} The proof is just a standard energy estimate in $L^2_\rho$ from the equation \eqref{eq:veptil}. We take the scalar product of \eqref{eq:veptil} with $\tilde{\vep}$ in $L^2_\rho$ and use the spectral gap \eqref{est:spectralgap} to get 
\begin{align*}
\frac{1}{2}\frac{d}{ds}\|\tilde \vep\|_{L^2_\rho}^2 \leq -\|\tilde \vep\|_{L^2_\rho}^2 + \Big| \langle \tilde{\Vc}\tilde{\vep} + NL(\tilde{\vep})+ \tilde E, \tilde{\vep} \rangle_\rho\big|. 
\end{align*} 
From the definition \eqref{def:Vcqtil} of $\tilde \Vc$ and integration by part, we get 
\begin{align*}
 \Big|\langle \tilde{\Vc}\tilde{\vep}, \tilde{\vep}\rangle_\rho\Big| &= \Big|-\frac{1}{2}\int_0^\infty \tilde{V_1}y \pa_y \tilde{\vep}^2 \rho dy + \int_0^\infty \tilde{V}_2 \tilde{\vep}^2 \rho dy\Big| \lesssim  \int_0^\infty \big(|y\pa_y \tilde V_1| + \yj^2 |\tilde V_1| + |\tilde V_2| \big) \tilde{\vep}^2 \rho dy. 
\end{align*}
From the definition \eqref{def:V12til} of $\tilde{V}_1$ and $\tilde{V}_2$, we have the rough bound 
\begin{equation} \label{est:V12tilpointwise}
 \forall y \geq 0, \quad  |y\pa_y \tilde V_1| + \yj^2 |\tilde V_1| + |\tilde V_2| \lesssim s^{-1} \yj^C,
\end{equation}
for some constant $C > 0$. Let $ 0< \kappa \ll  1$ be a small constant such that 
 $$\forall y \leq s^\kappa, \quad s^{-1} \yj^C \leq s^{-\kappa}. $$
We also get from \eqref{est:vephatpointswise} and the definition \eqref{def:vepnatural} of $\hat \vep_\natural$ to have the pointwise bound 
\begin{equation}\label{est:veptilpointwise}
\forall y \geq 0, \quad |\tilde \vep (y,s)| \lesssim |\hat \vep(y,s)| + |\hat \vep_\natural(y,s)| \lesssim A^3 s^{-1 - \frac{3}{2\ell}} \yj^C.
\end{equation}
By splitting the integral and using \eqref{est:V12tilpointwise}, \eqref{est:veptilpointwise}, we obtain 
\begin{align*}
\Big|\langle \tilde{\Vc}\tilde{\vep}, \tilde{\vep}\rangle_\rho\Big| &\lesssim s^{-1}\int_0^{s^\kappa} \yj^C \tilde{\vep}^2 \rho dy + A^{6}s^{-3 - \frac{3}{\ell}}\int_{s^\kappa}^\infty  \yj^C e^{-\frac{|y|^2}{2\ell}} dy\\
& \lesssim s^{-\kappa} \|\tilde{\vep}\|^2_{L^2_\rho} + A^{6}e^{-\eta s^{2\kappa}},
\end{align*}
for some $\eta > 0$. Arguing in a similar way to estimate the nonlinear term by using \eqref{est:veptilpointwise} and integration by parts, we obtain 
\begin{align*}
\big| \langle NL(\tilde\vep), \tilde{\vep}\rangle_\rho\big| &\lesssim \Big| \int_0^\infty d \tilde{\vep}^3 \rho dy +  \frac{1}{3}\int_0^\infty y\pa_y \tilde{\vep}^3 \rho dy \Big| \lesssim \int_0^\infty \yj^2 |\tilde{\vep}|^3 \rho dy\\
&\lesssim A^3 s^{-1 - \frac{3}{2\ell}} \int_0^{s^\kappa} \yj^C  |\tilde \vep|^2 \rho dy + A^{9} s^{-3 - \frac{9}{2\ell}}\int_{s^\kappa}^\infty \yj^{3C + d+1} e^{ -\frac{|y|^2}{2\ell}}dy\\
&\lesssim A^3s^{-\kappa} \|\tilde{\vep}\|^2_{L^2_\rho} + A^{9}e^{-\eta s^{2\kappa}}. 
\end{align*}
For the error term, we use the decomposition \eqref{est:decompEys}, the orthogonality \eqref{eq:orhtogonalvep}, Cauchy-Schwarz inequality and the estimate \eqref{est:RtilL2rho} to obtain 
\begin{align*}
\big| \langle \tilde{E}, \tilde \vep \rangle_\rho \big| = \big| \langle \tilde{R}, \tilde \vep \rangle_\rho \big| \leq \frac{1}{4} \|\tilde{\vep}\|^2_{L^2_\rho} + C\|\tilde{R}\|^2_{L^2_\rho} \leq \frac{1}{4} \|\tilde{\vep}\|^2_{L^2_\rho} + Cs^{-6}. 
\end{align*}
Putting together all the estimates and take $s_0 = s_0(A) \gg 1$ yields the desired formulation and concludes the proof of Lemma \ref{lemm:L2rho}.
\end{proof}

\subsection{Estimate for the intermediate region}
We perform an energy estimate to control the solution in the intermediate region $y \lesssim s^\frac{1}{2\ell} (\xi \lesssim 1)$. We claim the following. 
\begin{lemma}[Energy estimate in the intermediate region] \label{lemm:mid} Let $A \geq 1$ and $s \geq s_0 = s_0(A) \gg 1$ and $\vep(s) \in \Sc_A(s)$. We have
\begin{align}
\frac{d}{ds} \|\hat \vep(s)\|^2_{\flat}& \leq -\delta \|\hat \vep(s)\|^2_{\flat} +   C s^{-2 - \frac{3}{\ell}}, \label{est:vepmid0}\\
\frac{d}{ds} \|y\pa_y\hat \vep(s)\|^2_{\flat} &\leq -\delta \|y\pa_y\hat \vep(s)\|^2_{\flat} +   C \big(\|\hat \vep(s)\|^2_{\flat} +   s^{-2 - \frac{3}{\ell}}\big),\label{est:vepmid1}\\
\frac{d}{ds} \|(y\pa_y)^2\hat \vep(s)\|^2_{\flat} &\leq -\delta \|(y\pa_y)^2\hat \vep(s)\|^2_{\flat} +   C \big( \|y\pa_y\hat \vep(s)\|^2_{\flat} + \|\hat \vep(s)\|^2_{\flat} +   s^{-2 - \frac{3}{\ell}}\big), \label{est:vepmid2}
\end{align}
where $\delta > 0$ and $C = C(K) > 0$ is independent of $A$. 
\end{lemma}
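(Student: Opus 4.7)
The plan is to perform a weighted $L^2$ energy estimate directly on equation \eqref{eq:vephat} using the weight $\mu(y) := (1-\chi_{_K}(y))/y^{4\ell+3}$, so that $\|\hat\vep\|_\flat^2 = \int \hat\vep^2\,\mu\, dy$. Multiplying by $\hat\vep\mu$ and integrating, I would split the identity into the four pieces $\langle \Hs\hat\vep,\hat\vep\rangle_\mu + \langle \hat\Vc\hat\vep,\hat\vep\rangle_\mu + \langle NL(\hat\vep),\hat\vep\rangle_\mu + \langle \hat E,\hat\vep\rangle_\mu$. The three perturbative pieces are handled routinely: since the definition of $\hat\Psi$ is rigged to cancel the top-order $y^{2\ell}$ part of $\varphi_{2\ell}$, we have $\|\hat\Psi\|_{L^\infty}\lesssim s^{-1/\ell}$ on its support, giving $|\langle \hat\Vc\hat\vep,\hat\vep\rangle_\mu|\lesssim s^{-1/\ell}\|\hat\vep\|_\flat^2$; the bootstrap bound \eqref{est:vepLinfbound} gives $|\langle NL(\hat\vep),\hat\vep\rangle_\mu|\lesssim A^5 s^{-1/\ell}\|\hat\vep\|_\flat^2$; and Cauchy--Schwarz combined with \eqref{est:Ehatmid} yields $|\langle \hat E,\hat\vep\rangle_\mu|\leq \tfrac{\delta}{4}\|\hat\vep\|_\flat^2 + C s^{-2-3/\ell}$. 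Terms where a derivative falls on the cut-off $\chi_{_K}$ are supported in $K\leq y\leq 2K$ and, by the local $L^\infty$ estimate \eqref{est:vepLinfloc}, contribute only $O(A^4 s^{-4}(\log s)^2)$, which is negligible.

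The heart of the argument is the main linear term. Integration by parts on the Laplacian in divergence form $\Delta_{d+2} = y^{-(d+1)}\partial_y(y^{d+1}\partial_y\cdot)$ and on the drift yields
$$ \langle \Hs\hat\vep,\hat\vep\rangle_\mu = -\int (\partial_y\hat\vep)^2\, \mu\, dy + \int g_{\mathrm{eff}}(\xi)\, \hat\vep^2\, \mu\, dy + O(K^{-2})\|\hat\vep\|_\flat^2 + \mathcal{R}_{\mathrm{bdry}}, $$
where the effective potential is $g_{\mathrm{eff}}(\xi) := V_2(\xi) - (2\ell+1)V_1(\xi) - \tfrac{1}{2}\xi Q'(\xi)$. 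The core claim is $g_{\mathrm{eff}}(\xi) \leq -2\delta < 0$ uniformly in $\xi$. Using the profile ODE \eqref{equation-Q} to express $\xi Q' = Q(1-dQ)/(Q-1/2)$, this reduces to a quadratic positivity in $Q$:
$$ 2(Q-1/2)\, g_{\mathrm{eff}}(\xi) = (3d+4\ell+2)\, Q^2 - (2d+4\ell+3)\, Q + \tfrac{2\ell+3}{2}, $$
and a direct check shows the right-hand side is strictly positive on $Q\in[0,1/d]$ for the two admissible pairs $(d,\ell)\in\{(3,3),(4,2)\}$ (the minimum on this interval is attained at $Q=1/d$ and equals $1/18$ and $1/8$, respectively). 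Since $Q-1/2<0$ throughout, this forces $g_{\mathrm{eff}}\leq -2\delta$. Choosing $K$ and $s_0=s_0(A)$ large then absorbs all perturbative remainders and yields \eqref{est:vepmid0}.

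For \eqref{est:vepmid1} and \eqref{est:vepmid2}, I would apply $y\partial_y$ and $(y\partial_y)^2$ to \eqref{eq:vephat} and repeat the energy argument on $(y\partial_y)^j\hat\vep$. The commutator $[y\partial_y,\Hs] = -2\Delta_{d+2} - (\xi\partial_\xi V_1)\, y\partial_y + \xi\partial_\xi V_2$ produces only cross-terms of strictly lower order: the $\xi\partial_\xi V_{1,2}$ pieces have bounded coefficients and after Cauchy--Schwarz are controlled by $\|(y\partial_y)^{j-1}\hat\vep\|_\flat^2,\dots,\|\hat\vep\|_\flat^2$, while the $-2\Delta_{d+2}$ piece is reabsorbed via the identity $\Delta_{d+2}\hat\vep = \Hs\hat\vep + V_1 y\partial_y\hat\vep - V_2\hat\vep$ followed by one additional integration by parts. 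The same coercivity of $g_{\mathrm{eff}}$ closes each level of the hierarchy, producing precisely the additive $\|(y\partial_y)^{j-1}\hat\vep\|_\flat^2$ terms on the right-hand sides. The main obstacle throughout is the coercivity $g_{\mathrm{eff}}\leq -2\delta$: the positivity of the quadratic above is sharp and fails outside $(d,\ell)\in\{(3,3),(4,2)\}$, encoding precisely why the log-corrected blowup regime is confined to $d=3,4$, in agreement with Remark \ref{remark:specofLs}.
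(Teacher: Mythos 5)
Your proposal is correct and follows the same energy-estimate architecture as the paper: test \eqref{eq:vephat} against $\hat\vep(1-\chi_{_K})/y^{4\ell+3}$, extract dissipation from the Laplacian, coercivity from the drift plus multiplicative potential after one integration by parts, absorb $\hat\Vc$, $NL$, $\hat E$ and the cut-off boundary terms via $\|\hat\Psi\|_\infty\to 0$, \eqref{est:vepLinfbound}, Cauchy--Schwarz with \eqref{est:Ehatmid}, and the local bound \eqref{est:vephapK}. Your effective potential $g_{\mathrm{eff}}=V_2-(2\ell+1)V_1-\tfrac12\xi Q'$ and the commutator $[y\partial_y,\Hs]=-2\Delta_{d+2}-(\xi\partial_\xi V_1)y\partial_y+\xi\partial_\xi V_2$ are both correct, as is the identity $2(Q-\tfrac12)\,g_{\mathrm{eff}}=(3d+4\ell+2)Q^2-(2d+4\ell+3)Q+\tfrac{2\ell+3}{2}$.

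Where you differ from the paper is in how the coercivity $g_{\mathrm{eff}}\leq -2\delta$ is verified. You substitute the profile ODE \eqref{equation-Q} to eliminate $\xi Q'$ and then check that the resulting quadratic in $Q$ is strictly positive on $[0,1/d]$ for $(d,\ell)\in\{(3,3),(4,2)\}$. The paper does not use the ODE at all; it only uses the two monotonicity facts $Q\leq Q(0)=1/d$ and $\xi Q'\leq 0$ from \eqref{est:propQ} to bound the three summands separately, giving $(2\ell+1)(\tfrac12-Q)+1-2dQ-\tfrac12\xi Q'\geq (2\ell+1)/(2\ell)-1+0=1/(2\ell)$. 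This is shorter, and it makes the coercivity manifestly independent of the special choice of dimension.

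That last point exposes the one genuine error in your write-up. You assert that the positivity of the quadratic ``is sharp and fails outside $(d,\ell)\in\{(3,3),(4,2)\}$, encoding precisely why the log-corrected blowup regime is confined to $d=3,4$.'' That is false. With $\ell=d/(d-2)$, the value of your quadratic at its minimum $Q=1/d$ is
$$\frac{(d-2)\bigl[2\ell(d-2)-(d+2)\bigr]}{2d^2}=\frac{(d-2)^2}{2d^2}=\frac{1}{2\ell^2}>0\quad\text{for every }d\geq 3,$$
consistent with the paper's bound $g_{\mathrm{eff}}\leq -1/(2\ell)$ for all $d\geq 3$. The restriction to $d=3,4$ in this paper has nothing to do with the coercivity in the intermediate region; it comes from requiring $\ell=d/(d-2)\in\mathbb{N}$ so that $\lambda_{2\ell}=0$ is an actual eigenvalue of $\Ls_\alpha$ (Remark \ref{remark:specofLs}), which is what makes the $\log$-corrected scale appear in the first place. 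The body of your proof does not rely on this incorrect remark and remains valid.
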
 
\begin{proof} We begin with \eqref{est:vepmid0}. To ease the notation, we write in this proof 
$$\chi_{_K}(y) = \chi(y), \quad \int_0^\infty \square dy = \int \square dy.$$
From the equation \eqref{eq:vephat}, we have
\begin{align*}
\frac{1}{2} \frac{d}{ds} \|\hat \vep(s)\|^2_{\flat} &= \int \big(1 - \chi\big) \big(\Hs \hat \vep + \hat \Vc \hat \vep + NL(\hat \vep) + \hat E \big) \frac{\hat \vep}{y^{4\ell + 3}} dy.
\end{align*}
We rewrite from the definition \eqref{def:Hs} of $\Hs$ and $\Psi = Q + \hat \Psi$, 
$$\Hs + \hat \Vc  = \Delta_{d+2} + \Big( \Psi - \frac{1}{2} \Big)y\pa_y + \Big( 2d \Psi - 1 + y\pa_y \Psi \Big).$$
We compute by integration by parts and use the fact that the compact support of $\pa_y \chi$ and $\pa_y^2 \chi$ is in $(K, 2K)$,  
\begin{align*}
\int(1 - \chi)  \frac{ \hat \vep\Delta_{d+2} \hat \vep}{y^{4\ell + 3}} dy &\leq - \int (1 - \chi) \frac{|\pa_y \hat \vep|^2}{y^{4\ell + 3}} dy + C\int \hat\vep^2 \Big( \frac{|\pa_y \chi|}{y^{4\ell + 4}} + \frac{|\pa_y^2 \chi|}{y^{4\ell +3}} \Big)dy + C \int \frac{\hat \vep^2(1 - \chi)}{y^{4\ell + 5}} dy\\
&\leq - \int (1 - \chi) \frac{|\pa_y \hat \vep|^2}{y^{4\ell + 3}} dy + \frac{C}{K^{4\ell + 3}} \int_K^{2K} |\hat \vep|^2 dy + \frac{C}{K^2}\| \hat\vep\|^2_\flat. 
\end{align*}
Using the relation $\hat \vep = \tilde{\vep} + \hat \vep_\natural$ and the bootstrap bounds in Definition \ref{Definition-shrinking -set}, we obtain 
\begin{align}
\int_K^{2K} |\hat \vep|^2 dy &\leq \frac{e^{\frac{K^2}{\ell}}}{ K^{d+1}} \|\hat \vep\|_{L^2_\rho}^2 \leq \frac{e^{\frac{K^2}{\ell}}}{ K^{d+1}} \big( \|\tilde \vep\|_{L^2_\rho}^2 + \| \hat \vep_\natural\|^2_{L^2_\rho} \big) \nonumber \\
& \qquad \leq \frac{e^{\frac{K^2}{\ell}}}{ K^{d+1}}\Big(\frac{A^{6}}{ s^{6}} + \frac{A^4 \log^2s}{ s^{4}}\Big) \leq s^{-2 - \frac{3}{\ell}}. \label{est:vephapK}
\end{align}
Using integration by parts , we derive 
\begin{align*}
&\int \frac{(1 - \chi)\hat \vep}{y^{4\ell + 3}} \left[\Big( \Psi - \frac{1}{2} \Big)y\pa_y \hat \vep + \Big( 2d \Psi - 1 + y\pa_y \Psi \Big) \hat \vep\right]dy\\
& \quad  =-\int   \frac{(1 - \chi)\hat \vep^2}{y^{4\ell + 3}}\Big[(2\ell +1) \big( \frac 12 - \Psi \big) - \frac{1}{2} y\pa_y\Psi + 1 - 2d \Psi\Big] dy + \int \frac{\hat \vep^2 \pa_y \chi}{2 y^{4\ell +2}} dy. 
\end{align*}
We now use the monotonicity of $Q$ stated in \eqref{est:propQ} and the fact that $\|\hat \Psi(s)\|_\infty = \Oc(s^{-\frac{1}{2\ell}})$ to have
$$\forall y \geq 0, \quad  \frac{1}{2} - \Psi(y,s) \geq \frac{1}{2\ell} - Cs^{-\frac{1}{2\ell}}, \quad \Psi(y,s) \leq \frac{1}{d} + Cs^{-\frac{1}{2\ell}}, \quad y\pa_y \Psi(y,s) \leq Cs^{-\frac{1}{2\ell}},$$
hence, 
$$\forall y \geq 0, \quad  (2\ell +1) \big( \frac 12 - \Psi \big) - \frac{1}{2} y\pa_y\Psi + 1 - 2d \Psi \geq \frac{2\ell + 1}{2\ell} - 1 - Cs^{-\frac{1}{2\ell}} = \frac{1}{2\ell} - Cs^{-\frac{1}{2\ell}} \geq  \frac{1}{4\ell}. $$
The term with cutoff $\pa_y \chi$ is simply estimated as in \eqref{est:vephapK},  
$$ \Big|\int \frac{\hat \vep^2 \pa_y \chi}{2 y^{4\ell +2}} dy \Big| \lesssim K^{-4\ell -2}\int_K^{2K} |\hat \vep|^2 dy \lesssim s^{-2 - \frac{3}{\ell}}.$$
Hence, by taking $K \gg 1$ large,  the full linear term is estimate by 
\begin{equation}\label{est:linearestimate}
\int (1 - \chi) \frac{\hat \vep(\Hs + \hat \Vc) \hat \vep}{y^{4\ell +3}} dy \leq  - \int (1 - \chi) \frac{|\pa_y \hat \vep|^2}{y^{4\ell + 3}} dy  -\frac{1}{6\ell} \| \hat \vep(s)\|^2_\flat + Cs^{-2 - \frac{3}{\ell}}. 
\end{equation}
As for the nonlinear term, we estimate by using \eqref{est:vepLinfbound}, 
\begin{align*}
\Big| \int (1 - \chi) \frac{\hat \vep NL(\hat \vep)}{y^{4\ell + 3}}dy \Big| &= \Big| \int (1 - \chi) \frac{\hat \vep^2 (d\hat \vep + y \pa_y \hat \vep)}{y^{4\ell + 3}}dy \Big|\\
& \qquad \leq (\|\hat \vep(s)\|_\infty + \| y\pa_y \hat \vep(s)\|_\infty) \|\hat \vep(s)\|^2_\flat \lesssim A^8s^{-\frac{1}{\ell}}   \|\hat \vep(s)\|^2_\flat. 
\end{align*}
For the error term, we use Cauchy-Schwarz inequality and \eqref{est:Ehatmid}, 
\begin{align*}
\int (1 - \chi) \frac{|\hat \vep| |\hat E|}{y^{4\ell +3}} dy \leq \frac{1}{8\ell} \|\hat \vep(s)\|^2_\flat + C\| \hat E\|^2_\flat \leq \frac{1}{8\ell} \|\hat \vep(s)\|^2_\flat + Cs^{-2 - \frac{3}{\ell}}. 
\end{align*}
Collecting all the above bounds and taking $K \gg 1$ and $s_0(A) \gg 1$, we end up with
\begin{align*}
 \frac{1}{2} \frac{d}{ds}\|\hat \vep(s)\|^2_\flat &\leq \Big( - \frac{1}{6\ell} + \frac{1}{8\ell} + \frac{CA^8}{s^{\frac{1}{\ell}}}\Big) \|\hat \vep(s)\|^2_\flat   + Cs^{-2 - \frac{3}{\ell}} \leq -  \delta \|\hat \vep(s)\|^2_\flat  + Cs^{-2 - \frac{3}{\ell}},
 \end{align*}
for some $\delta > 0$, which concludes the proof of \eqref{est:vepmid0}. The derivation of \eqref{est:vepmid1} and \eqref{est:vepmid2} is similar as for \eqref{est:vepmid0}. Indeed, the equations satisfied by 
$$ g_1 = y\pa_y \hat \vep, \quad g_2 = y\pa_y g_1,$$
have the same forms as for $\hat \vep$ with an extra commutator,
\begin{align*}
\pa_s g_1 &= (\Hs + \hat\Vc)g_1 + [y\pa_y, \Hs + \hat \Vc] \hat \vep + y\pa_y (NL(\hat \vep) + \hat E), \\
\pa_s g_2 &= (\Hs + \hat\Vc)g_2 + [y\pa_y, \Hs + \hat \Vc] g_1 + y\pa_y ( [\Hs + \hat \Vc, y\pa_y] \hat \vep ) +  (y\pa_y)^2(NL(\hat \vep) + \hat E).
\end{align*}
The linear part is estimated as in \eqref{est:linearestimate} that provides a dissipative term and a coercive estimate with the constant $- \frac{1}{6\ell}$. The commutator term $[y\pa_y, \Hs + \hat \Vc] \hat \vep$ is then controlled either by the dissipation or by the norm $\|\hat \vep(s)\|^2_\flat$, and similarly for the term $[y\pa_y, \Hs + \hat \Vc] g_1$. The nonlinear term and the error term are estimated by integration by parts, Cauchy-Schwarz inequality and the dissipation with the provided estimate \eqref{est:Ehatmid} of the error term that we omit the detail here. This concludes the proof of Lemma \ref{lemm:mid}.
\end{proof}

\subsection{Estimate for the outer region} \label{sec:outer}
This section is devoted to the control of the remainder $\vep$ in the outer region $y \gg s^\frac{1}{2\ell} (\xi \gg 1)$ based on the well-known semigroup properties of the Hermite operator $\Ls_\eta = \Delta - \eta z\cdot \nabla$ with $\eta = \frac{1}{2}$.   Let $\chi_{_K}$ be the cut-off function defined by \eqref{def:chiK} and recall the definition of $\vep^\out$, 
$$\vep^\out(y,s) = \vep(y,s) (1 - \chi_{_K}(\xi)), \quad \xi = ys^{-\frac{1}{2\ell}}.$$
In what follows, we write without distinguishing
$$\vep = \vep(z, s) \equiv \vep(y, s), \quad \vep^\out = \vep^\out(z, s) \equiv \vep^\out(y, s), \quad y = |z|, \;\; z \in \Rb^d,$$
and notice that
$$z\cdot\nabla_z \vep \equiv y\pa_y \vep, \quad \nabla \cdot(z \vep) = z\cdot\nabla \vep + d \vep \equiv y\pa_y \vep + d \vep,$$ 
From \eqref{eq:vepys}, we have the equation satisfied by $\vep^\out$,
\begin{equation}\label{eq:vepout1}
\pa_s \vep^\out  = \big(\Ls_\eta - \textup{Id}\big)\vep^\out+ F + \Ec^{bd}, \quad \eta = \frac{1}{2},
\end{equation} 
where 
\begin{align}
F &= \big(1 - \chi_{_{K}} \big)\big[ \big( 2y^{-2} +  Q\big) y\pa_y \vep + (2dQ + y \pa_yQ) \vep + NL(\vep) + E\big], \nonumber \\
& =  \big(1 - \chi_{_{K}} \big)\big[ P_1 y\pa_y \vep + P_2 \vep + NL(\vep) + E\big] = \big(1 - \chi_{_{K}} \big) \hat F, \label{def:F} \\
\Ec^{bd} & = \Big(-\pa_s \chi_{_K} + \Delta \chi_{_K}  - \frac 1{2\ell} y \pa_y \chi_{_K}\Big) \vep+ 2\pa_y \chi_{_K}\pa_y \vep. \label{def:EcbdK}
\end{align}
We restate some well-known semigroup properties of the Hermite operator $\Ls_\eta = \Delta - \eta z.\nabla$ acting on general functions (not necessary radially symmetric) defined from $\Rb^{d}$ to $\Rb$ 
\begin{lemma}[Properties of the semigroup $e^{s \Ls_\eta}$] \label{lemm:semigroupH} The kernel of the semigroup $e^{s \Hs_\eta}$ is given by 
\begin{equation}\label{def:kernelLalp}
e^{s\Ls_\eta}(z,\xi)  = \frac{1}{[2\pi \eta (1 - e^{-s})]^\frac{d}{2}} \exp\Big(  - \frac{\eta}{2} \frac{ |z e^{-s/2} - \xi|^2}{(1 - e^{-s})} \Big). 
\end{equation}
The action of $e^{s \Ls_\eta}$ on the function $g: \Rb^{d} \to \Rb$ is defined by 
$$e^{s \Ls_\eta} g(z) = \int_{\Rb^{d}} e^{s\Ls_\eta} (z,\xi) g(\xi ) d\xi.  $$
We have the following properties: \\
(i) $\big\| e^{s \Ls_\eta} g\big\|_\infty \leq \|g\|_\infty$ for all $g \in L^\infty(\Rb^{d})$.\\
(ii) $\big\| e^{s \Ls_\eta}\nabla g \big\|_\infty \leq \frac{C}{\sqrt{1 - e^{-s}}} \|g\|_\infty$ for all $g \in L^\infty(\Rb^{d})$.\\
\end{lemma}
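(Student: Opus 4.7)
The lemma is a standard Mehler-type computation for the Ornstein--Uhlenbeck operator $\Ls_\eta = \Delta - \eta z \cdot \nabla$ on $\Rb^d$. The plan is to derive the kernel \eqref{def:kernelLalp} by reducing the associated parabolic Cauchy problem to the pure heat equation through a time-dependent rescaling, and then to read off (i) and (ii) directly from the explicit Gaussian form.

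\textbf{Derivation of the kernel.} Write $u(z,s) = e^{s\Ls_\eta} g(z)$, so that $\pa_s u = \Delta u - \eta z \cdot \nabla u$ with $u(\cdot, 0) = g$. I introduce the rescaling
\begin{equation*}
u(z,s) = v(\zeta, \tau), \qquad \zeta = z e^{-\eta s}, \qquad \tau(s) = \frac{1 - e^{-2\eta s}}{2\eta}.
\end{equation*}
A direct chain-rule computation shows that the drift term is absorbed by the spatial rescaling (since $\eta z \cdot \nabla_z u$ cancels the term produced by differentiating $\zeta$ in $s$), while the time rescaling turns the diffusion coefficient into $1$, so that $v$ solves $\pa_\tau v = \Delta_\zeta v$ with $v(\cdot, 0) = g$. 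Convolving against the Euclidean heat kernel and inverting the change of variables yields the Mehler representation \eqref{def:kernelLalp}.

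\textbf{Proofs of (i) and (ii).} The kernel is a non-negative Gaussian in $\xi$ centered at $z e^{-\eta s}$ with variance proportional to $\tau(s)$, and its normalization forces $\int_{\Rb^d} e^{s \Ls_\eta}(z, \xi)\, d\xi = 1$. Property (i) is then immediate:
\begin{equation*}
\big| e^{s \Ls_\eta} g(z) \big| \leq \|g\|_\infty \int_{\Rb^d} e^{s \Ls_\eta}(z, \xi)\, d\xi = \|g\|_\infty.
\end{equation*}
For (ii), I integrate by parts in $\xi$ to transfer the gradient onto the kernel, use
\begin{equation*}
\nabla_\xi e^{s \Ls_\eta}(z, \xi) = -\frac{\xi - z e^{-\eta s}}{2\tau(s)}\, e^{s \Ls_\eta}(z, \xi),
\end{equation*}
and invoke the first absolute-moment bound $\int_{\Rb^d} |\xi - z e^{-\eta s}|\, e^{s \Ls_\eta}(z, \xi)\, d\xi \leq C \sqrt{\tau(s)}$ for a Gaussian with variance of order $\tau(s)$. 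Combining yields
\begin{equation*}
\big\| e^{s \Ls_\eta} \nabla g \big\|_\infty \lesssim \frac{\|g\|_\infty}{\sqrt{\tau(s)}},
\end{equation*}
which becomes the stated estimate after specializing to $\eta = 1/2$, where $\tau(s) = 1 - e^{-s}$.

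\textbf{On the difficulty.} There is no genuine obstacle in the lemma itself: its content is classical Gaussian calculus, and the only care needed is bookkeeping of the constants in the rescaling $(\zeta, \tau)$. The lemma is invoked later only to supply the quantitative $L^\infty$-contraction and $1/\sqrt{1-e^{-s}}$ smoothing needed to run the Duhamel argument for $\vep^\out$ in Section \ref{sec:outer}; the substantive difficulties of the paper sit in the inner and intermediate regions, not here.
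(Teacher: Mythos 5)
Your proof is correct and takes essentially the same route as the paper's one-line proof: both reduce to the Euclidean heat kernel via the standard Ornstein--Uhlenbeck change of variables ($\zeta = z e^{-\eta s}$, $\tau = (1-e^{-2\eta s})/(2\eta)$) and then read (i) and (ii) off the explicit Gaussian by the mass-one normalization and the first-moment bound respectively. One bookkeeping remark worth being aware of: the rescaling you carry out actually produces the prefactor $\big[2\pi(1-e^{-2\eta s})/\eta\big]^{-d/2}$ with center $z e^{-\eta s}$, whereas \eqref{def:kernelLalp} as printed has $\big[2\pi\eta(1-e^{-s})\big]^{-d/2}$ and $z e^{-s/2}$; after specializing to $\eta=1/2$ the exponents agree but the prefactors differ by $2^d$, so the displayed formula has a small typo ($\eta$ versus $1/\eta$) that your derivation silently corrects — this is harmless for (i) and (ii), which only need the kernel to integrate to an $s$-independent constant, but you have proved the corrected version of \eqref{def:kernelLalp}, not the literal one.
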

\begin{proof} The formulation \eqref{def:kernelLalp} can be verified by a direct check after a simple change of variable, thanks to the fact that the function $\rho_0(z) = e^{-\frac{\eta |z|^2}{2}}$ satisfies $\Delta \rho_0 + \eta z \cdot \nabla \rho_0 + d\eta\rho_0 = 0$ for all $z \in \Rb^{d}$. The estimates in (i)-(ii) are straightforward from \eqref{def:kernelLalp}. 
\end{proof}

\begin{lemma}[Estimates in the outer region] \label{lemm:outer} For $A \geq 1$ and $s_0 = s_0(A) \gg 1$ and $\vep(s) \in \Sc_A(s)$, we have for all $ \tau \in [s_0, s]$, 
\begin{equation}\label{est:vepoutDuh}
j = 0,1, \quad  \|(y\pa_y)^j \vep^\out(s)\|_{L^\infty} \leq e^{-(s - \tau)} \|(y\pa_y)^j\vep^\out(\tau)\|_{L^\infty} + \frac{C(K) A^{3+j}}{\tau^{\frac{1}{\ell}}}(1 + s - \tau),
\end{equation}
and 
\begin{equation}\label{est:vepoutDy1}
 \|y\vep^\out(s)\|_{L^\infty} \leq e^{-(s - \tau)} \|y\vep^\out(\tau)\|_{L^\infty} + \frac{C(K) A^{3}}{\tau^{\frac{1}{2\ell}}}(1 + s - \tau),
\end{equation}
\end{lemma}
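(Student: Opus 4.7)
The strategy is to apply Duhamel's formula to \eqref{eq:vepout1} and exploit the contractivity of the semigroup $e^{t\Ls_{1/2}}$ in $L^\infty$ from Lemma \ref{lemm:semigroupH}(i). For $j=0$, this gives
\begin{equation*}
\vep^\out(s) = e^{-(s-\tau)}e^{(s-\tau)\Ls_{1/2}}\vep^\out(\tau) + \int_\tau^s e^{-(s-\sigma)}e^{(s-\sigma)\Ls_{1/2}}\big(F(\sigma) + \Ec^{bd}(\sigma)\big)\, d\sigma,
\end{equation*}
and the proof of \eqref{est:vepoutDuh} with $j=0$ reduces to establishing $\|F(\sigma)\|_\infty + \|\Ec^{bd}(\sigma)\|_\infty \leq C(K)A^{3}\sigma^{-1/\ell}$.

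For $F$ defined in \eqref{def:F}: on the support of $1-\chi_K$, one has $\xi \geq K$, so by \eqref{est:QxiInf} the coefficients $P_1 = 2y^{-2}+Q$ and $P_2 = 2dQ + y\pa_y Q$ are $\Oc(\xi^{-2})$; combined with the bootstrap bounds \eqref{est:vepout} for $\vep^\out$ and $y\pa_y\vep^\out$, the linear contributions are controlled, the nonlinear term is quadratic and negligible, and the dominant term is the error $E$ with $\|E\|_\infty \lesssim s^{-1/\ell}$ per \eqref{est:ELinf}. For $\Ec^{bd}$ in \eqref{def:EcbdK}, the derivatives of $\chi_K$ are supported on the annulus $Ks^{1/(2\ell)} \leq y \leq 2Ks^{1/(2\ell)}$ with sizes $|\pa_s\chi_K| = \Oc(s^{-1})$, $|\pa_y\chi_K| = \Oc(K^{-1}s^{-1/(2\ell)})$, $|\Delta\chi_K| = \Oc(K^{-2}s^{-1/\ell})$. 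Applying the pointwise estimate \eqref{est:vephatpointswise} on that annulus, where $\langle y\rangle^{2\ell+1} \lesssim K^{2\ell+1}s^{(2\ell+1)/(2\ell)}$, yields $\|\Ec^{bd}(\sigma)\|_\infty \leq C(K)A^3 \sigma^{-1/\ell}$. Plugging into Duhamel and using the monotonicity $\sigma^{-1/\ell}\leq \tau^{-1/\ell}$ for $\sigma\geq \tau$ produces \eqref{est:vepoutDuh} for $j=0$, the factor $(1+s-\tau)$ being absorbed by the trivial bound $1-e^{-(s-\tau)} \leq 1 \leq 1+s-\tau$.

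For $j=1$, I would derive the equation for $h = y\pa_y \vep^\out = z\cdot\nabla\vep^\out$ by using the commutator identity $[z\cdot\nabla, \Ls_{1/2}] = -2\Delta$, obtaining
\begin{equation*}
\pa_s h = (\Ls_{1/2}-\textup{Id})h - 2\Delta\vep^\out + y\pa_y(F + \Ec^{bd}).
\end{equation*}
The bounds on $y\pa_y F$ and $y\pa_y \Ec^{bd}$ are derived analogously, with one additional factor of $A$ from the $\|y\pa_y\vep^\out\|_\infty$ bootstrap bound, giving $\Oc(C(K)A^4\sigma^{-1/\ell})$. The extra commutator term is rewritten $-2\Delta\vep^\out = -2\nabla\cdot(\nabla\vep^\out)$ and handled via Lemma \ref{lemm:semigroupH}(ii), which, after using $\|\nabla\vep^\out\|_\infty \lesssim (Ks^{1/(2\ell)})^{-1}\|y\pa_y\vep^\out\|_\infty$, produces the kernel factor $(1-e^{-(s-\sigma)})^{-1/2}$. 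Since
\begin{equation*}
\int_\tau^s (1-e^{-(s-\sigma)})^{-1/2} d\sigma \leq C(1+s-\tau),
\end{equation*}
this is precisely the origin of the $(1+s-\tau)$ factor in \eqref{est:vepoutDuh}.

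For the weighted estimate \eqref{est:vepoutDy1}, I would proceed similarly for $y\vep^\out$, observing that multiplication by $y$ on the support $\{y\geq Ks^{1/(2\ell)}\}$ of $\vep^\out$ costs a factor $\sim K s^{1/(2\ell)}$, which converts the $\sigma^{-1/\ell}$ decay of the source into $\sigma^{-1/(2\ell)}$, matching the statement. The commutator $[y, \Ls_{1/2}]$ produces only a lower-order first-order term which is absorbed in the analysis.

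\textbf{Main obstacle.} The most delicate point is the $j=1$ case: the commutator $[z\cdot\nabla,\Ls_{1/2}] = -2\Delta$ produces a term involving second derivatives of $\vep^\out$ that is not directly controlled by the bootstrap regime. Rewriting $\Delta = \nabla\cdot\nabla$ and invoking the gradient semigroup estimate Lemma \ref{lemm:semigroupH}(ii) circumvents this difficulty, and is precisely the mechanism responsible for the $(1+s-\tau)$ factor. A secondary difficulty is the careful tracking of the $\Ec^{bd}$ boundary term, which requires applying the intermediate-region pointwise bound \eqref{est:vephatpointswise} at the sharp boundary $y \sim Ks^{1/(2\ell)}$ and matches powers of $s$ exactly.
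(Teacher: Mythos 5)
Your $j=0$ argument and the overall Duhamel-plus-semigroup strategy match the paper's proof. You also correctly identify the key mechanism for the commutator term $[z\cdot\nabla,\Ls_{1/2}]\vep^\out = -2\Delta\vep^\out$: write it in divergence form and absorb the extra derivative into the kernel via Lemma \ref{lemm:semigroupH}(ii). This is exactly what the paper does.

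However, there is a genuine gap in your $j=1$ treatment of the source terms. You state that the bounds on $y\pa_y F$ and $y\pa_y\Ec^{bd}$ are ``derived analogously, with one additional factor of $A$ from the $\|y\pa_y\vep^\out\|_\infty$ bootstrap bound,'' and you identify the commutator as the only place where second derivatives appear. That is not the case. Expanding $y\pa_y F$ with $F = (1-\chi_K)[P_1\,y\pa_y\vep + P_2\vep + NL(\vep) + E]$ produces the terms
\begin{equation*}
(1-\chi_K)\, P_1\,(y\pa_y)^2\vep \quad\textup{and}\quad (1-\chi_K)\,\vep\,(y\pa_y)^2\vep \;\;\textup{(from $y\pa_y(y\vep\pa_y\vep)$)},
\end{equation*}
and $y\pa_y\Ec^{bd}$ contains $2\pa_y\chi_K\, y\pa_y^2\vep$. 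None of these second-order derivatives are controlled in $L^\infty$ by the bootstrap regime: Definition \ref{Definition-shrinking -set} only gives $\|(y\pa_y)^j\vep^\out\|_{L^\infty}$ for $j=0,1$, and the pointwise estimate \eqref{est:vephatpointswise} on the annulus $Ks^{1/(2\ell)}\leq y\leq 2Ks^{1/(2\ell)}$ also stops at one $y\pa_y$. So these terms cannot simply be bounded in $L^\infty$ and plugged into Duhamel. The paper resolves this by a non-trivial integration-by-parts decomposition,
\begin{equation*}
z\cdot\nabla F = \pa_y\big(yP_1 g(1-\chi_K) + y\vep^\out g\big) + G, \qquad z\cdot\nabla\Ec^{bd} = 2\pa_y(\pa_y\chi_K\,g) + G^{bd},
\end{equation*}
with $g = y\pa_y\vep$, isolating the second-derivative contributions as exact $\pa_y(\cdots)$ terms whose interior arguments \emph{are} bootstrap-controllable, and then invoking Lemma \ref{lemm:semigroupH}(ii) for these as well. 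Your proposal applies this device only to the commutator, so as written the $j=1$ estimate does not close. You should also note that your estimate $\|\nabla\vep^\out\|_\infty \lesssim (Ks^{1/(2\ell)})^{-1}\|y\pa_y\vep^\out\|_\infty$ is precisely the mechanism used on the interior arguments of these divergence terms (e.g., $y\vep^\out g$, $\pa_y\chi_K\,g$), so the fix is within reach of the tools you have already introduced; it simply needs to be applied systematically.
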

\begin{proof} We use Duhamel's formula and item (i) of Lemma \ref{lemm:semigroupH} to write from \eqref{eq:vepout1} for all $\tau \in [s_0, s]$, 
\begin{align*}
\|\vep^\out(s)\|_{L^\infty} \leq e^{-(s - \tau)}\|\vep^\out(\tau)\|_{L^\infty} + \int_\tau^s e^{-(s - s')}\big(\| F(s')\|_{L^\infty} + \|\Ec^{bd}(s')\|_{L^\infty}\big) ds',
\end{align*}
 Due to  the the cut-off $\chi_{_K}(\xi)$, the boundary term $\Ec^{bd}(\vep)$ is located in the zone $K s^{\frac{1}{2\ell}} \leq y \leq 2K s^{\frac{1}{2\ell}} (K \leq \xi \leq 2K)$  and it is bounded by using the estimate from the intermediate region. In particular, we have from \eqref{est:vephatpointswise} and the bootstrap bounds \eqref{est:vepk} and \eqref{est:vepell} the following estimate for $j = 0,1$,
$$ K \leq \xi \leq 2K, \quad | (y\pa_y)^j\vep(y,s)| \leq |(y\pa_y)^j\hat \vep(y,s)| + |(y\pa_y)^j\hat \vep_\natural(y,s)| \leq C(K)A^3 s^{-\frac{1}{\ell}}.$$
Hence, from the definition \eqref{def:EcbdK}, we obtain
$$ \| \Ec^{bd} (s)\|_{L^\infty} \lesssim \| \vep(s)\|_{L^\infty(K \leq \xi \leq 2K)} +  \|y \pa_y\vep(s)\|_{L^\infty(K \leq \xi \leq 2K)} \lesssim  C(K) A^3 s^{-\frac{1}{\ell}}. $$
For the term $F$, we use the decay of $Q$ that is 
$$\forall y \geq K s^\frac{1}{2\ell}, \quad |Q(\xi)| + |\xi \pa_\xi Q(\xi)| \lesssim \xi^{-2} \lesssim K^{-2}s^{-\frac{1}{\ell}},$$
and from the definition \eqref{def:NLq} of $NL(\vep)$ and the bootstrap bound \eqref{est:vepout} and the bound \eqref{est:ELinf}, we get 
\begin{align*}
\|F\|_{L^\infty} &\lesssim s^{-\frac{1}{\ell}} \big(\| \vep^\out\|_{L^\infty} + \| y\pa_y \vep^\out\|_{L^\infty}\big) + \|\vep^\out\|^2_{L^\infty} + \|\vep^\out\|_{L^\infty}\| y\pa_y \vep^\out\|_{L^\infty} + \|E\|_{L^\infty}\\
& \lesssim A^{9}s^{-\frac{2}{\ell}} + s^{-\frac{1}{\ell}} \lesssim s^{-\frac{1}{\ell}},
\end{align*}
for $s_0(A) \gg 1$ so that  $A^9 s_0^{-\frac{1}{\ell}} \lesssim 1$. We gather all these estimates and simply bound $\int_\tau^s e^{-(s - s')} s'^{-\frac{1}{\ell}} ds' \lesssim \tau^{-\frac{1}{\ell}} (1 + s - \tau)$ to conclude the estimate \eqref{est:vepoutDuh} for $j = 0$. \\
The proof of \eqref{est:vepout} for the case $j =1$ is similar  as for $j = 0$ by using (ii) of Lemma \ref{lemm:semigroupH}. The only difference is due to the extra commutator term in the equation satisfied by 
$$g^\out = z \cdot \nabla \vep^\out \equiv y\pa_y \vep^\out ,$$
which reads as
\begin{equation}\label{eq:gout1}
\pa_s g^\out  = \big(\Ls_\eta - \textup{Id}\big)g^\out+ [z\cdot \nabla ,\Delta_{d+2}] \vep^\out + z\cdot \nabla (F + \Ec^{bd}),
\end{equation} 
where 
$$[z\cdot \nabla , \Delta_{d+2}] \vep^\out =  - 2\Delta \vep^\out = -2\nabla \cdot \Big( \frac{z g^\out}{y^2}\Big).$$
Let 
$$g = z\cdot \nabla \vep \equiv y\pa_y \vep, $$
we write from the definition \eqref{def:F} of $F$, 
\begin{align*}
z\cdot \nabla F & = -y\pa_y \chi_{_K} (\hat F + yP_1 g ) + \pa_y(yP_1 g (1 - \chi_{_K})) \\
& \quad \qquad +  (1 - \chi_{_K})\Big[ (y\pa_y P_1 + P_2 - yP_1)g + y\pa_y P_2 \vep + E + NL(\vep)\Big] ,
\end{align*}
and from the definition \eqref{def:NLq} of $NL$, 
$$  (1 - \chi_{_K})y\pa_y NL(\vep) =  (1 - \chi_{_K})\Big[ (2d-1) \vep g + g^2\Big]  - y\pa_y\chi_{_K} \vep g + \pa_y(y\vep^\out g). $$
Hence, 
\begin{equation}
z\cdot \nabla F = \pa_y \big(yP_1 g (1 - \chi_{_K}) + y\vep^\out g) + G,
\end{equation}
where we can bound $G$ in $L^\infty$ from the bootstrap estimates \eqref{est:vepout}, the decay of $Q$, the support of $\chi_{_K}$ and its derivatives, 
\begin{align*}
\|G(s)\|_{L^\infty} \lesssim s^{-\frac{1}{\ell}}, \quad \| yP_1 g(1 - \chi_{_K}) + y \vep^\out g)\|_{L^\infty} \lesssim A^9 s^{-\frac{3}{2\ell}} \lesssim s^{-\frac{1}{\ell}}. 
\end{align*}
Similar, we have 
\begin{align*}
z\cdot \nabla \Ec^{bd} = 2\pa_y(\pa_y\chi_{_K} g) + G^{bd}, 
\end{align*}
where $G^{bd}$ and $\pa_y\chi_{_K} g$ have supports on $\{Ks^{\frac{1}{2\ell}} \leq y \leq 2Ks^{\frac{1}{2\ell}}\}$ that can be bounded using the estimate \eqref{est:vephatpointswise}, 
$$\|G^{bd}(s)\|_{L^\infty} + \|\pa_y\chi_{_K} g\|_{L^\infty} \lesssim A^3 s^{-\frac{1}{\ell}}. $$
We now use the Duhamel's formula applied to \eqref{eq:gout1}, Lemma \ref{lemm:semigroupH} and \eqref{est:ELinf}  to get 
\begin{align*}
\|g^\out(s)\|_{L^\infty} &\leq e^{-(s - \tau)}\|g^\out(\tau)\|_{L^\infty} + \int_\tau^s \frac{e^{-(s - s')}}{\sqrt{1 - e^{-(s - s')}}} \Big[ \| y^{-1}g^\out\|_{L^\infty} +   \|\pa_y\chi_{_K} g\|_{L^\infty} +   \|\pa_y\chi_{_K} g\|_{L^\infty}\Big] \\
&  \qquad \qquad  + \int_\tau^s e^{-(s - s')} \Big[ \| G(s')\|_{L^\infty} + \| G^{bd}(s')\|_{L^\infty} + \|y\pa_y E(s')\|_{L^\infty}\Big] ds'\\
& \lesssim e^{-(s - \tau)}\|g^\out(\tau)\|_{L^\infty} + A^3\int_\tau^s \frac{e^{-(s - s')}}{\sqrt{1 - e^{-(s - s')}}}  (s')^{-\frac{1}{
\ell}} ds' +  \int_\tau^s e^{-(s - s')}(s')^{-\frac{1}{
\ell}}ds'\\
& \lesssim e^{-(s - \tau)}\|g^\out(\tau)\|_{L^\infty} + A^3 \tau^{-\frac{1}{\ell}} (1 + s - \tau). 
\end{align*}
This concludes the proof of \eqref{est:vepoutDuh} for $j = 1$. The estimate \eqref{est:vepoutDy1} for $\|y \vep^\out\|_{L^\infty}$ follows the same proof as for \eqref{est:vepoutDuh}, except that the bound on the error $\| yE (1 - \chi_{_K})\|_{L^\infty} \lesssim s^{-\frac{1}{2\ell}}$. This completes the proof of Lemma \ref{lemm:outer}.  
\end{proof}

\subsection{Proof of Proposition \ref{prop:3} and  Theorem \ref{theo:1}} \label{sec:proofofmainthm}
In this section we give the proof of Proposition \ref{prop:3} to complete the proof of Proposition \ref{prop:2}. Theorem \ref{theo:1} is a direct consequence of Proposition \ref{prop:2}.

\begin{proof}[Proof of Proposition \ref{prop:3}]  The basic idea is to improve the bootstrap estimates given in Definition \ref{Definition-shrinking -set}, except for the first $\ell$ modes $(\hat \vep_k)_{0 \leq k \leq \ell-1}$. Regarding the constants, we fix them in the following order:  we fix $K \gg 1$ a large constant independent of $A$, then $A = A(K) \gg 1$, then $s_0 = s_0(A) \gg 1$. We recall from the assumption that 
$$\vep(s) \in \Sc_A(s) \quad \forall s \in [s_0, s_1] \quad \textup{and} \quad \vep(s_1) \in  \pa \Sc_A(s_1).$$
(i) (\textit{Improve bootstrap estimates}) Let's begin with $\hat \vep_\ell $ and argue by contradiction that  there is $\bar s \in [s_0, s_1]$ such that 
$$ |\hat \vep_\ell(s) | < \frac{A^2 \log s}{s^2} \; \forall s\in [s_0, \bar s), \quad  \hat \vep_\ell(\bar s) = \pm\frac{A^2 \log \bar s}{\bar s^2},$$
then, we have by equation \eqref{eq:ODEvepk} (consider that case $\hat \vep(\bar s) > 0$, similar for the negative case)
$$ -\frac{2A^2 \log \bar s}{\bar s^3} + \frac{C}{\bar s^3}  \hat \vep_\ell'(\bar s) \geq A^2 \frac{d}{ds} \frac{A^2 \log s}{s^2}\Big\vert_{\bar s} =  \frac{A^2}{\bar s^3} - \frac{2A^2 \log \bar s}{\bar s^3},$$
which can not happen for $A$ large enough. Therefore, $\hat \vep_\ell(s)$ never touches its boundary, 
$$|\hat \vep_\ell(s_1) | < \frac{A^2 \log s_1}{s_1^2}.$$
As for the modes $\hat \vep_k$ with $k = \ell + 1, \cdots, 2\ell - 1$, we integrate the ODE \eqref{eq:ODEvepk} forward in time  and use the fact that the eigenvalue is negative to conclude that $\hat \vep_k(s)$ can not touch its boundary as well. The same way for $\|\tilde \vep(s)\|_{L^2_\rho}$ and $\|(y\pa_y)^j \hat \vep\|_\flat$ thanks to the energy estimates derived in Lemmas \ref{lemm:L2rho} and \eqref{lemm:mid}. The improvement of $\| (y\pa_y)^j\vep^\out\|_{L^\infty}$ and $\|y \vep^\out\|_{L^\infty}$ follows from Lemma \ref{lemm:outer} by taking $\lambda = \log A \gg 1$ and $s_0 \geq \lambda$ such that for all $\tau \geq s_0$ and $s \in [\tau, \tau + \lambda]$, we have 

$$\tau \leq s \leq \tau +\lambda \leq \tau + s_0 \leq 2\tau,  \quad \textup{hence}, \quad \frac{1}{2\tau}\leq \frac{1}{s} \leq \frac{1}{\tau}\leq \frac{2}{s}. $$

This give us the bound 
$$\frac{C(K) A^{3+j}}{\tau^{\frac{1}{\ell}}}(1 + s - \tau) \lesssim \frac{C(K) A^{3+j} \log A}{s^{\frac{1}{\ell}}} < \frac{A^{4+j}}{s^\frac{1}{\ell}},$$
for $A$ large enough. This concludes that $\vep(s_1)$ can only touch its boundary $\pa \Sc_A(s_1)$ at the first $\ell$ modes $(\hat \vep_k)_{0 \leq k \leq \ell-1}$. \\
(ii) (\textit{Transverse crossing}) The estimate \eqref{transversecross} follows from a direct computation thanks to \eqref{eq:ODEvepk}, 
\begin{equation}
\frac{1}{2}\frac{d}{ds}\sum_{k = 0}^{\ell - 1} \hat \vep^2_k(s_1) = \sum_{k = 0}^{\ell - 1} \Big[ (1 - k/\ell) \hat \vep^2(s_1) + \Oc(s^{-2} |\hat \vep(s_1)|)\Big] \geq \frac{A^4 - CA^2}{s_1^4} > 0,
\end{equation}
for $A$ large enough. This completes the proof of Proposition \ref{prop:3} as well as Proposition \ref{prop:2}. 
\end{proof}

\begin{proof}[Proof of Theorem \ref{theo:1}]  (i) and (ii) follows from the definition of the shrinking set \ref{Definition-shrinking -set} and the relation $w = d v + y\pa_y v$ and $\phi_{2\ell} = d \varphi_{2\ell} + y\pa_y \varphi_{2\ell}$. As for (iii), we use the same argument as in Herrero-Vel\'azquez \cite{HVaihn93} for the classical nonlinear heat equation (see also Bebernes-Bricher \cite{BBsima92}, Zaag \cite{ZAAihn98}, \cite{GNZana20} for a similar approach), we only sketch the computation for the reader convenience. We introduce the  auxiliary function
$$g(x_0, \xi, \tau) = (T - t_0)u(x,t), \quad x = x_0 + \xi \sqrt{T- t_0}, \quad t = t_0 + \tau(T-t_0),$$
where $t_0 = t_0(x_0)$ is uniquely determined by 
$$|x_0| = K_0\sqrt{T - t_0} | \log(T - t_0)| ^\frac{1}{2\ell}, \quad K_0 \gg 1.$$
We have the relation 
$$\log (T - t_0) \sim 2\log |x_0|, \quad T - t_0 \sim \frac{|x_0|^2}{K_0^2 \big(2 |\log |x_0|| \big)^ \frac{1}{\ell} }. $$
From \eqref{exp:innerIntro}, we have 
\begin{align*}
u^*(x_0) = \lim_{t \to T} u(x,t) = (T - t_0)^{-1} \lim_{\tau \to 1} g(x_0,0, \tau) = (T-t_0)^{-1}\hat g_{K_0}(1).
\end{align*}
We compute from \eqref{est:QxiInf},
$$\hat g_{K_0}(1) = F(K_0) \sim (d - 2) c_\ell^{-\frac{1}{\ell}} K_0^{-2}, $$
which gives
$$u^*(x_0) \sim  (d- 2) \left(\frac{2}{c_\ell} \right)^\frac{1}{\ell} \frac{|\log |x_0|| ^ \frac{1}{\ell}}{|x_0|^2} \quad \textup{as}\;\; |x_0| \to 0.$$
This completes the proof of Theorem \ref{theo:1}.
\end{proof}

\def\cprime{$'$}

\end{document}